\begin{document}
\title{Long time error analysis of finite difference time domain methods for the nonlinear Klein-Gordon equation with weak nonlinearity}

\author[Weizhu Bao et.~al.]{Weizhu Bao\affil{1},
          Yue Feng\affil{1}\comma\corrauth, Wenfan Yi\affil{2}}
\address{\affilnum{1}\ Department of Mathematics, National University of
Singapore, Singapore 119076\\
\affilnum{2}\  School of Mathematics and Econometrics, Hunan University,
Changsha,  410082, Hunan Province,
P. R. China}
\emails{{\tt matbaowz@nus.edu.sg} (W.~Bao),
{\tt fengyue@u.nus.edu} (Y.~Feng),{\tt wfyi@hnu.edu.cn} (W.~Yi) }


\begin{abstract}
We establish error bounds of the finite difference time domain (FDTD) methods for the long time dynamics of the nonlinear Klein-Gordon  equation (NKGE) with a cubic nonlinearity, while the nonlinearity strength is characterized by $\varepsilon^2$ with $0 <\varepsilon \leq 1$ a dimensionless parameter.
When $0 < \varepsilon \ll 1$, it is in the weak nonlinearity regime and
the problem is equivalent to the NKGE  with small initial data, while the amplitude of the initial data (and the solution) is at $O(\varepsilon)$.
Four different FDTD methods are adapted to discretize the problem and rigorous error bounds of the FDTD methods are established for the long time dynamics, i.e. error bounds are valid up to the time at $O(1/\varepsilon^{\beta})$ with $0 \le  \beta \leq 2$, by using the energy method and the techniques of either the cut-off of the nonlinearity or the mathematical induction to bound the numerical approximate solutions.
In the error bounds, we pay particular attention to how error bounds depend explicitly on the mesh size $h$ and time step $\tau$ as well as the small parameter $\varepsilon\in (0,1]$, especially in the weak nonlinearity regime when $0 < \varepsilon \ll 1$.
Our error bounds indicate that, in order to get ``correct'' numerical solutions up to the time at $O(1/\varepsilon^{\beta})$, the $\varepsilon$-scalability (or meshing strategy) of the FDTD methods
should be taken as:  $h = O(\varepsilon^{\beta/2})$ and  $\tau = O(\varepsilon^{\beta/2})$. As a by-product, our results can indicate error bounds and $\varepsilon$-scalability of the FDTD methods for the discretization of an oscillatory
NKGE which is obtained from the case of weak nonlinearity by a rescaling in time, while its solution propagates waves with wavelength
at $O(1)$ in space and $O(\varepsilon^{\beta})$ in time.
Extensive numerical results are reported to confirm our error bounds
and to demonstrate that they are sharp.
\end{abstract}

\ams{35L70, 65M06, 65M12, 65M15, 81-08}
\keywords{nonlinear Klein-Gordon equation, finite difference time domain methods, long time error analysis, weak nonlinearity, oscillatory nonlinear Klein-Gordon equation.}

\maketitle

{\sl Dedicated to Professor Jie Shen on the occasion of his 60th birthday}

\section{Introduction}
\label{sec1}
Consider the nonlinear Klein-Gordon  equation (NKGE) with a cubic nonlinearity on a torus $\mathbb{T}^d\; (d = 1, 2, 3)$ \cite{FV, LP, SJJ, SV} as
\begin{equation}
\begin{split}
&\partial_{tt} u({\bf{x}},t) -\Delta u({\bf{x}},t) +  u({\bf{x}},t) + \varepsilon^2 u^3({\bf{x}},t)= 0, \quad  {\bf{x}} \in \mathbb{T}^d,\quad t > 0,\\
&u({\bf{x}}, 0) = \phi({\bf{x}}),\quad 
\partial_t u( {\bf{x}}, 0) = \gamma( {\bf{x}}),\quad  {\bf{x}} \in \mathbb{T}^d.
\end{split}
\label{eq:WNE}
\end{equation}
Here $t$ is time, $\bf{x}\in {\mathbb R}^d$ is the spatial coordinates, $u: = u({\bf{x}},t)$ is a real-valued scalar field, $0 < \varepsilon \leq 1 $ is a dimensionless parameter, and $\phi( {\bf{x}})$ and $\gamma( {\bf{x}})$ are two given real-valued functions which are independent of $\varepsilon$.  The NKGE is a relativistic (and nonlinear) version of the Schr\"odinger equation and it is widely used in quantum electrodynamics, particle and/or plasma physics to describe the dynamics of a spinless particle in some extra potential \cite{SJJ, B, BCZ, BDZ, FS, M, MNO}. Provided that  $u(\cdot, t) \in H^1(\mathbb{T}^d)$ and $\partial_t u(\cdot, t) \in L^2(\mathbb{T}^d)$, the NKGE (\ref{eq:WNE}) is time symmetric or time reversible and conserves the {\sl energy} \cite{BD, DXZ}, i.e.,
\begin{equation}
\begin{split}
E(t) := & \int_{\mathbb{T}^d} \left[ |\partial_t u ({\bf{x}}, t)|^2 + |\nabla u({\bf{x}}, t)|^2 + |u({\bf{x}}, t)|^2 +\frac{\varepsilon^2}{2} |u({\bf{x}}, t)|^4  \right] d {\bf{x}} \\
 \equiv & \int_{\mathbb{T}^d} \left[ |\gamma({\bf{x}})|^2 + |\nabla \phi({\bf{x}})|^2 + |\phi({\bf{x}})|^2 +\frac{\varepsilon^2}{2} |\phi({\bf{x}})|^4  \right] d {\bf{x}} := E(0)=O(1), \quad t \geq 0.
\end{split}
\label{eq:Energy_u}
\end{equation}

We remark here that, when $0 < \varepsilon \ll1 $, rescaling the amplitude of the wave function $u$ by introducing $w({\bf{x}},t)  = \varepsilon u({\bf{x}},t) $, then the NKGE \eqref{eq:WNE} with weak nonlinearity can be reformulated as the following NKGE  with small initial data, while the amplitude of the initial data (and the solution) is at $O(\varepsilon)$:
\begin{equation}
\begin{split}
&\partial_{tt} w({\bf{x}},t)  -\Delta w({\bf{x}},t)  +   w({\bf{x}},t)  + w^3({\bf{x}},t) = 0, \quad {\bf{x}} \in \mathbb{T}^d,\quad t > 0,\\
&w({\bf{x}}, 0) = \varepsilon \phi( {\bf{x}}),\quad 
\partial_t w({\bf{x}}, 0) = \varepsilon \gamma({\bf{x}}),\quad {\bf{x}} \in \mathbb{T}^d.
\label{eq:SIE}
\end{split}
\end{equation}
Again, the above NKGE  (\ref{eq:SIE}) is time symmetric or time reversible and conserves the {\sl energy} \cite{BD, DXZ}, i.e.,
\begin{equation}
\begin{split}
\bar{E}(t) := &\int_{\mathbb{T}^d} \left[ |\partial_t w ({\bf{x}}, t)|^2 + |\nabla w({\bf{x}}, t)|^2 + |w({\bf{x}}, t)|^2 +\frac{1}{2} |w({\bf{x}}, t)|^4  \right] d {\bf{x}}=\varepsilon^2 E(t)\\
\equiv & \int_{\mathbb{T}^d} \left[ \varepsilon^2 |\gamma({\bf{x}})|^2 + \varepsilon^2 |\nabla \phi({\bf{x}})|^2 + \varepsilon^2|\phi({\bf{x}})|^2 +\frac{\varepsilon^4}{2} |\phi({\bf{x}})|^4  \right] d {\bf{x}} := \bar{E}(0)=O(\varepsilon^2).
\label{eq:Energy_u1}
\end{split}
\end{equation}
In other words, the NKGE  with weak nonlinearity and $O(1)$ initial data, i.e. \eqref{eq:WNE}, is equivalent to it with small initial data and $O(1)$ nonlinearity, i.e. (\ref{eq:SIE}). In the following, we
only present numerical methods and their error bounds for the NKGE  with weak nonlinearity. Extensions of the numerical methods and their error bounds to the NKGE  with small initial data
are straightforward.

There are extensive analytical results in the literature for the NKGE \eqref{eq:WNE} (or  \eqref{eq:SIE}). For the existence of global classical solutions and almost periodic solutions as well as asymptotic behavior of solutions, we refer to \cite{BP, BV, CE, VW, BJ, VH, VF} and references therein. For the Cauchy problem with small initial data (or weak nonlinearity), the global existence and asymptotic behavior of solutions were studied in different space dimensions and with different nonlinear terms \cite{K, KT, LZ, O, TY}.
Recently, more attentions have been devoted to analyzing the life-span of the solutions of the NKGE  \eqref{eq:SIE} \cite{KT, LH}. The results indicate that the life-span of a smooth solution to the NKGE (\ref{eq:SIE}) (or \eqref{eq:WNE})  is at least up to time at $O(\varepsilon^{-2})$ \cite{DS,D}. For more details related to this topic, we refer to \cite{D2, FZ} and references therein.

For the numerical aspects of the NKGE  (\ref{eq:WNE}) (or (\ref{eq:SIE})), different numerical methods have been proposed and analyzed
in the literatures \cite{BD,CWG, DB, Z}, including the finite difference time domain (FDTD) methods \cite{BD,CWG, DB, Z}, exponential wave integrator Fourier pseudospectral (EWI-FP) method \cite{BD,BDZ0,BY}, multiscale time integrator Fourier pseudospectral (MTI-FP) method \cite{BCZ},  etc. In these results, the error bounds are normally valid up to the time at $O(1)$.
Since the life-span of the solution of the NKGE \eqref{eq:WNE} can be up to the time at $O(\varepsilon^{-2})$, it is a natural question to ask how the performance of a numerical method for \eqref{eq:WNE} up to the time at $O(\varepsilon^{-2})$, i.e. long time error analysis. In other words, one has to establish error bounds of the numerical method for \eqref{eq:WNE} up to the time at $O(\varepsilon^{-2})$ instead of the classical error bounds which are only valid up to the time at $O(1)$.
The purpose of this paper is to carry out rigorous error analysis of four widely used FDTD methods for the NKGE (\ref{eq:WNE}) in the long time regime.
In our error bounds, we pay particular attention to how the error bounds depend explicitly on the mesh size $h$ and time step $\tau$ as well as
the small parameter $\varepsilon\in (0,1]$.
In our numerical analysis, besides the standard technique of the energy method and the inverse inequality, we adapt the cut-off of the nonlinearity
for the conservative methods, and resp., the mathematical induction for
nonconservative methods, to obtain a priori bound of the numerical solution in the $l^\infty$ norm. Based on our rigorous error bounds, in order to obtain ``correct'' numerical approximations of the NKGE  (\ref{eq:WNE}) (or (\ref{eq:SIE})) up to the long time at $(\varepsilon^{-\beta})$ with $0\le \beta\le 2$ a fixed constant, the
$\varepsilon$-scalability (or meshing strategy requirement)  of the FDTD methods when $0<\varepsilon \ll1$ is:
\[h = O(\varepsilon^{\beta/2})\quad \mbox{and}\quad \tau = O(\varepsilon^{\beta/2}).\]

As a by-product, by rescaling the time as $t\to t/\varepsilon^{\beta}$ with
$0\le \beta\le 2$ in (\ref{eq:WNE}), then the problem (\ref{eq:WNE}) can be re-formulated as an oscillatory NKGE  whose
solution propagates waves with wavelength at $O(1)$ in space and $O(\varepsilon^{\beta})$ in time. The FDTD methods to (\ref{eq:WNE})
and their error bounds over long time can be extended straightforwardly to the oscillatory NKGE  up to the time at $O(1)$. With the error
bounds, the $\varepsilon$-scalability (or meshing strategy) of the FDTD methods for the oscillatory NKGE  can be drawn.

The rest of the paper is organized as follows. In Section 2, different explicit/semi-implicit/implicit and conservative/nonconservative FDTD discretizations are presented for the NKGE \eqref{eq:WNE} and their properties of the stability, conservation and solvability are analyzed. In Section 3, we establish rigorous error estimates of the FDTD methods for the NKGE \eqref{eq:WNE} over long time dynamics.
 Extensive numerical results are reported in Section 4 to confirm our error bounds. In Section 5, we extend the FDTD methods and their error bounds to an oscillatory NKGE. Finally, some conclusions are drawn in Section 6. Throughout this paper, we adopt the notation $p \lesssim q$ to represent that there exists a generic constant $C > 0$, which is independent of the mesh size $h$ and time step $\tau$ as well as $\varepsilon$ such that $|p| \leq C q$.

\section{FDTD methods and their analysis}

In this section, we adapt four different FDTD methods to discretize the NKGE \eqref{eq:WNE} and analyze their properties, such as stability, energy conservation and solvability. For simplicity of notations, we shall only present the numerical methods and their analysis for the NKGE (\ref{eq:WNE}) in one space dimension (1D). Thanks to tensor grids, generalizations to higher dimensions are straightforward and results remain valid with minor modifications. In 1D, consider the following NKGE
\begin{equation}
\begin{split}
&\partial_{tt} u(x, t) - \partial_{xx} u (x, t)+  u(x, t) + \varepsilon^2 u^3 (x, t) = 0,\quad x \in \Omega = (a, b),\quad t > 0, \\
&u(x, 0) =\phi(x), \quad \partial_t u(x, 0) =\gamma(x) , \quad x \in \overline{\Omega} = [a, b],
\label{eq:21}
\end{split}
\end{equation}
with periodic boundary conditions. 

\subsection{FDTD methods}
Choose the temporal step size $\tau := \Delta t>0$ and the spatial mesh size $h : = \Delta x>0$, and denote  $M=(b - a)/h $ being a positive integer  and the grid points and time steps as:
\begin{equation}
x_j := a + j h, \quad j = 0, 1, \ldots, M; \quad t_n := n\tau, \quad n = 0, 1, 2, \ldots.
\end{equation}
Denote $X_M = \{u = (u_0,u_1, \ldots, u_M)^T |u_j\in \mathbb{R},j=0,1,2,\ldots,M, u_0 = u_M\}$ and we always use $u_{-1} = u_{M-1}$ and $u_{M+1} = u_{1}$ if they are involved.
The standard discrete $l^2$, semi-$H^1$ and $l^{\infty}$ norms and inner product in $X_M$ are defined as
\begin{equation*}
\|u\|^2_{l^2} = h \sum^{M-1}_{j=0} |u_j|^2, \quad \|\delta^{+}_x u\|^2_{l^2} = h \sum^{M-1}_{j=0} |\delta^{+}_x u_j|^2,\quad\|u\|_{l^{\infty}} = \max_{0 \leq j \leq {M-1}} |u_j|, \quad  (u, v) = h \sum^{M-1}_{j=0} u_j v_j,
\end{equation*}
with $\delta^{+}_x u\in X_M$ defined as $\delta^{+}_x u_j=(u_{j+1}-u_j)/h$ for $j=0,1,\ldots, M-1$.

Let $u^n_j$ be the numerical approximation of $u(x_j, t_n)$ for $j = 0, 1, \ldots, M$, $n \geq 0$ and denote the numerical solution at time $t = t_n$ as $u^n=(u_0^n, u_1^n, \ldots, u_M^n)^T\in X_M$. We introduce the finite difference operators as
\begin{equation*}
\delta^{+}_t u^n_j = \frac{u^{n+1}_j-u^n_j}{\tau},\quad \delta^{-}_t u^n_j = \frac{u^{n}_j-u^{n-1}_j}{\tau},\quad \delta^{2}_t u^n_j = \frac{u^{n+1}_j-2u^{n}_j+u^{n-1}_j}{{\tau}^2},
\end{equation*}
\begin{equation*}
\delta^{+}_x u^n_j = \frac{u^{n}_{j+1}-u^n_j}{h},\quad \delta^{-}_x u^n_j = \frac{u^{n}_j-u^{n}_{j-1}}{h},\quad \delta^{2}_x u^n_j = \frac{u^{n}_{j+1}-2u^{n}_j+u^{n}_{j-1}}{h^2}.
\end{equation*}

Here we consider four frequently used FDTD methods to discretize the NKGE  (\ref{eq:21}):

I. The Crank-Nicolson finite difference (CNFD) method
\begin{equation}
\delta^{2}_t u^n_j- \frac{1}{2}\delta^{2}_x \left(u^{n+1}_j+u^{n-1}_j\right)+ \frac{1}{2}\left(u^{n+1}_j+u^{n-1}_j \right) +  \varepsilon^2G\left(u^{n+1}_j, u^{n-1}_j \right) = 0,\quad n \geq 1;
\label{eq:CNFD_WNE}
\end{equation}

II. A semi-implicit energy conservative finite difference (SIFD1) method
\begin{equation}
\delta^{2}_t u^n_j - \delta^{2}_x u^{n}_j + \frac{1}{2} \left(u^{n+1}_j+u^{n-1}_j \right) +   \varepsilon^2G\left(u^{n+1}_j, u^{n-1}_j \right) = 0,\quad n \geq 1;
\label{eq:SIFD1_WNE}
\end{equation}

III. Another semi-implicit finite difference (SIFD2) method
\begin{equation}
\delta^{2}_t u^n_j-\frac{1}{2}\delta^{2}_x\left(u^{n+1}_j+u^{n-1}_j\right)+ \frac{1}{2} \left(u^{n+1}_j+u^{n-1}_j \right) + \varepsilon^2 \left(u^{n}_j \right)^3 = 0,\quad n \geq 1;
\label{eq:SIFD2_WNE}
\end{equation}

IV. The leap-frog finite difference (LFFD) method
\begin{equation}
\delta^{2}_t u^n_j- \delta^{2}_x u^{n}_j+   u^{n}_j + \varepsilon^2 \left(u^{n}_j \right)^3  = 0,\qquad  j=0,1,\ldots, M-1,\quad n \geq 1.
\label{eq:LFFD_WNE}
\end{equation}
Here,
\begin{equation}
 G(v, w) = \frac{F(v)-F(w)}{v-w},\quad \forall\; v, w \in \mathbb{R},\quad
F(v) = \int^v_0 s^3 ds=\frac{v^4}{4},\quad v \in \mathbb{R}.
\end{equation}
The initial and boundary conditions in \eqref{eq:21} are discretized as
\begin{equation}
u^{n+1}_0 = u^{n+1}_M,\quad u^{n+1}_{-1} = u^{n+1}_{M-1}, \quad n \geq 0;\quad u^0_j = \phi(x_j),\quad j = 0, 1, \ldots, M,
\label{eq:wib}
\end{equation}
where the initial velocity $\gamma(x)$ is employed to update the first step $u^1$ by the Taylor expansion and the NKGE \eqref{eq:21} as
\begin{equation}
u^1_j = \phi(x_j) + \tau \gamma(x_j) + \frac{{\tau}^2}{2} \left[ \delta^2_x\phi(x_j)- \phi(x_j)-\varepsilon^2 \left(\phi(x_j)\right)^3\right], \quad j = 0, 1, \ldots, M.
\label{eq:w1}
\end{equation}

It is easy to check that the above FDTD methods are all time symmetric or time reversible, i.e. they are unchanged if interchanging $n + 1 \leftrightarrow n-1$ and $\tau \leftrightarrow -\tau$. In addition, the LFFD \eqref{eq:LFFD_WNE} is explicit and might be the simplest and the most efficient discretization for the NKGE \eqref{eq:21} with the computational cost per time step at $O(M)$. The others are implicit schemes. Nevertheless, the CNFD \eqref{eq:CNFD_WNE} and SIFD1 \eqref{eq:SIFD1_WNE} can be solved via either a direct solver or an iterative solver with the computational cost per time step depending on the solver, which is usually larger than $O(M)$, especially in two dimensions (2D) and three dimensions (3D). Meanwhile, the solution of the SIFD2 \eqref{eq:SIFD2_WNE} can be explicitly updated in the Fourier space with $O(M{ \rm ln} M)$ computational cost per time step, and such approach is valid in higher dimensions.

\subsection{Stability, energy conservation and solvability}

Let $T_0>0$ be a fixed constant and $0\le \beta\le 2$, and denote
\begin{equation}
\sigma_{\rm max}:=\max_{0 \leq n \leq {T_0\varepsilon^{-\beta}}/{\tau}}\| u^n\|_{l^\infty}^2. \label{smax}
\end{equation}
Following the von Neumann linear stability analysis of the classical FDTD methods for the NKGE in the nonrelativistic limit regime \cite{BD, LE}, we can conclude the linear stability of the above FDTD methods for the NKGE \eqref{eq:21} in the following lemma.

\begin{lemma} (linear stability)
For the above FDTD methods applied to the NKGE \eqref{eq:21} up to the time
$t=T_0\varepsilon^{-\beta}$, we have:

(i) The CNFD (\ref{eq:CNFD_WNE}) is unconditionally stable for any $h > 0, \tau > 0$ and $0 < \varepsilon \leq 1$.

(ii) When $h\ge 2$, the SIFD1 (\ref{eq:SIFD1_WNE}) is unconditionally stable for any $h > 0$ and $\tau > 0$; and when $0<h<2$, this scheme is conditionally stable under the stability condition
\begin{equation}
0 < \tau < {2h}/{\sqrt{4-h^2}},\quad h > 0, \quad 0 < \varepsilon \leq 1.
\label{eq:con2}
\end{equation}

(iii) When $\sigma_{\rm max} \leq  \varepsilon^{-2}$, the SIFD2 (\ref{eq:SIFD2_WNE}) is unconditionally stable for any $h > 0$ and $\tau > 0$; and when $\sigma_{\rm max} >  \varepsilon^{-2}$, this scheme is conditionally stable under the stability condition
\begin{equation}
0 < \tau < {2}/{\sqrt{\varepsilon^{2}\sigma_{\rm max} -  1}},\quad h > 0, \quad 0 < \varepsilon \leq 1.
\label{eq:con3}
\end{equation}

(iv) The LFFD (\ref{eq:LFFD_WNE}) is conditionally stable under the stability condition
\begin{equation}
0 < \tau < {2h}/{\sqrt{4+h^2(1+\varepsilon^{2}\sigma_{\rm max})}},\quad h > 0, \quad 0 < \varepsilon \leq 1.
\label{eq:con4}
\end{equation}
\label{lemma:stability}
\end{lemma}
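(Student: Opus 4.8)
The plan is to carry out the classical von Neumann (single Fourier mode) stability analysis, exactly in the spirit of the references \cite{BD, LE}, with the cubic term handled by freezing its coefficient. First I would linearize the nonlinearity so that each scheme becomes a constant-coefficient linear difference scheme. In the explicit schemes SIFD2 and LFFD the term $\varepsilon^2(u_j^n)^3$ is replaced by a frozen coefficient $\varepsilon^2\sigma$ multiplying $u_j^n$, where by construction $0\le\sigma\le\sigma_{\rm max}$; in the implicit schemes CNFD and SIFD1 the function $G(u_j^{n+1},u_j^{n-1})$ is linearized likewise, and the key observation is that the resulting frozen coefficient is \emph{nonnegative} (since $F(v)=v^4/4$ gives a difference quotient of $F'$ of one sign), so it can only improve stability and may be dropped when extracting the binding condition.

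Next I would insert the ansatz $u_j^n=\xi^n e^{i\mu(jh)}$, with $\mu$ ranging over the admissible wave numbers on the torus. Using
\[
\delta_x^2\, e^{i\mu jh}=-\frac{4}{h^2}\sin^2\!\Big(\frac{\mu h}{2}\Big)\,e^{i\mu jh},
\]
I introduce the real symbol $p:=\frac{4}{h^2}\sin^2(\frac{\mu h}{2})\in[0,4/h^2]$ of $-\delta_x^2$. Substituting and multiplying through by $\tau^2\xi$, each of the four time-symmetric schemes produces a quadratic amplification equation of the palindromic form
\[
\xi^2-2\theta\,\xi+1=0,
\]
where $\theta$ is real and depends on $p,\tau,\varepsilon$ and the frozen coefficient, its precise form differing from scheme to scheme according to which terms are treated implicitly (and hence enter the coefficient of $\xi^2+1$) and which are explicit (entering the coefficient of $\xi$).

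The heart of the argument is then the elementary fact that, since the product of the two roots equals $1$, the scheme is stable (both roots satisfy $|\xi|\le1$) \emph{if and only if} $|\theta|\le1$: when $|\theta|\le1$ the roots are complex conjugates lying exactly on the unit circle, whereas $|\theta|>1$ forces two real roots of product $1$, one of modulus larger than $1$. It then remains to translate $|\theta|\le1$ into the stated conditions by optimizing over the mode parameter $p$. For CNFD the spatial and mass terms are both implicit, giving $\theta=(1+\cdots)^{-1}\in(0,1]$ for every $p$, hence unconditional stability (i). For SIFD1 the explicit Laplacian contributes $p$ to the $\xi$-coefficient while the mass term contributes a fixed $+1$ to the $\xi^2+1$ coefficient, so the worst mode is $p=4/h^2$ and $|\theta|\le1$ reduces to $(p-1)\tau^2\le4$, which is automatic when $h\ge2$ and otherwise yields $\tau<2h/\sqrt{4-h^2}$, giving (ii). For SIFD2 the nonlinearity is explicit and contributes $\varepsilon^2\sigma$ to the $\xi$-coefficient while the spatial and mass terms are implicit, so the worst mode is $p=0$ and $|\theta|\le1$ reduces to $(\varepsilon^2\sigma-1)\tau^2\le4$, producing the dichotomy in (iii). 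For LFFD everything is explicit, whence $\theta=1-\frac{\tau^2}{2}(p+1+\varepsilon^2\sigma)$, and imposing $\theta\ge-1$ at the worst mode $p=4/h^2$ gives exactly (iv).

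The hard part will not be any single algebraic manipulation but rather the frozen-coefficient linearization itself: I must argue carefully that replacing the cubic term by a bounded nonnegative frozen coefficient is the correct interpretation of ``linear stability'' here, that $\sigma_{\rm max}$ furnishes the needed uniform bound, and — crucially — that the \emph{worst-case mode} differs across schemes (highest frequency $p=4/h^2$ for SIFD1 and LFFD, lowest frequency $p=0$ for SIFD2), since this is what selects between the $h$-dependent and the $\sigma_{\rm max}$-dependent thresholds in the statement.
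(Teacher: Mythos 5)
Your proposal is correct and follows exactly the route the paper takes: the paper gives no written proof of this lemma, simply invoking the von Neumann (frozen-coefficient Fourier mode) analysis of \cite{BD, LE}, which is precisely what you carry out, and your identification of the palindromic amplification polynomial, the sign of the linearized $G$, and the worst-case modes ($p=4/h^2$ for SIFD1/LFFD, $p=0$ for SIFD2) reproduces the stated conditions \eqref{eq:con2}--\eqref{eq:con4}. The only cosmetic caveat is that the borderline case $|\theta|=1$ with a double root gives linear-in-$n$ growth, which is why the stated conditions use strict inequalities; otherwise nothing is missing.
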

\vspace{-0.4in}
\begin{remark}
The stability of schemes \eqref{eq:SIFD2_WNE} - \eqref{eq:LFFD_WNE} is related to $\sigma_{\max}$, dependent on the boundedness of the $l^{\infty}$ norm of the numerical solution $u^n$ at the previous time step. The convergence estimates up to the previous time step could ensure such a bound in the $l^{\infty}$ norm, by making use of the inverse inequality, and such an error estimate could be recovered at the next time step, as given by the Theorems presented in Section 3.
\end{remark}

For the CNFD (\ref{eq:CNFD_WNE}) and SIFD1 (\ref{eq:SIFD1_WNE}), we can show that they conserve the energy in the discretized level with the proofs proceeding in the analogous lines as those in \cite{BD, LV, SV} and we omit the details here for brevity.
\begin{lemma} (energy conservation)
For $n\geq 0$, the CNFD \eqref{eq:CNFD_WNE} conserves the discrete energy as
\begin{equation}\label{eq:dis_En}
\begin{split}
{E}^n & :=   \|\delta^{+}_t u^n\|^2_{l^2} + \frac{1}{2}\sum\limits_{k=n}^{n+1}\|\delta^{+}_x u^k\|^2_{l^2} + \frac{1}{2}\sum\limits_{k=n}^{n+1}\|u^k\|^2_{l^2}  +\frac{\varepsilon^2 h}{4}\sum^{M-1}_{j=0} \left[ (u^n_j)^4+(u^{n+1}_j)^4 \right]   \equiv {E}^0.
\end{split}
\end{equation}
Similarly, the SIFD1 (\ref{eq:SIFD1_WNE}) conserves the discrete energy as
\begin{equation}
\begin{split}
\tilde{E}^n & :=  \|\delta^{+}_t u^n\|^2_{l^2} + h \sum^{M-1}_{j=0} (\delta^+_x u^n_j) (\delta^+_x u^{n+1}_j) + \frac{1}{2}\sum\limits_{k=n}^{n+1} \|u^k\|^2_{l^2} +\frac{\varepsilon^2 h}{4}\sum^{M-1}_{j=0} \left[ (u^n_j)^4+(u^{n+1}_j)^4 \right]\\
&\equiv \tilde{E}^0, \qquad n\ge0.
\end{split}
\end{equation}
\label{lemma:ec}
\end{lemma}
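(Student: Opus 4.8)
The plan is to use the standard discrete energy method: take the discrete inner product of each scheme with the centered time difference $u^{n+1}-u^{n-1}$ and show that the resulting identity telescopes into $E^n-E^{n-1}=0$ (resp.\ $\tilde E^n-\tilde E^{n-1}=0$). Concretely, for the CNFD \eqref{eq:CNFD_WNE} I would multiply the equation at node $j$ by $u^{n+1}_j-u^{n-1}_j$, multiply by $h$, sum over $j=0,1,\ldots,M-1$, and treat the four terms separately.

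For the temporal term the algebraic identity $(u^{n+1}_j-2u^n_j+u^{n-1}_j)(u^{n+1}_j-u^{n-1}_j)=(u^{n+1}_j-u^n_j)^2-(u^n_j-u^{n-1}_j)^2$ gives, after summation, $\|\delta^+_t u^n\|^2_{l^2}-\|\delta^+_t u^{n-1}\|^2_{l^2}$. For the Laplacian term I would invoke summation by parts under the periodic boundary conditions \eqref{eq:wib}, namely $h\sum_j(\delta^2_x v_j)w_j=-h\sum_j(\delta^+_x v_j)(\delta^+_x w_j)$, applied with $v=u^{n+1}+u^{n-1}$ and $w=u^{n+1}-u^{n-1}$; the factorization $(\delta^+_x(u^{n+1}+u^{n-1}))(\delta^+_x(u^{n+1}-u^{n-1}))=(\delta^+_x u^{n+1})^2-(\delta^+_x u^{n-1})^2$ then yields $\tfrac12(\|\delta^+_x u^{n+1}\|^2_{l^2}-\|\delta^+_x u^{n-1}\|^2_{l^2})$. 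The linear potential term is handled identically via $(u^{n+1}_j+u^{n-1}_j)(u^{n+1}_j-u^{n-1}_j)=(u^{n+1}_j)^2-(u^{n-1}_j)^2$. The crucial role of the choice $F(v)=v^4/4$ and of the discrete gradient $G$ appears in the nonlinear term: by the very definition $G(v,w)=(F(v)-F(w))/(v-w)$ one has $G(u^{n+1}_j,u^{n-1}_j)(u^{n+1}_j-u^{n-1}_j)=F(u^{n+1}_j)-F(u^{n-1}_j)=\tfrac14((u^{n+1}_j)^4-(u^{n-1}_j)^4)$, which is exactly the finite difference of the quartic contribution to $E^n$. Collecting the four pieces reproduces $E^n-E^{n-1}=0$, and induction on $n$ gives $E^n\equiv E^0$.

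The proof for the SIFD1 \eqref{eq:SIFD1_WNE} is identical except for the Laplacian term, which now reads $-\delta^2_x u^n_j$ rather than the symmetric average. Multiplying again by $u^{n+1}_j-u^{n-1}_j$ and summing by parts gives $h\sum_j(\delta^+_x u^n_j)(\delta^+_x u^{n+1}_j)-h\sum_j(\delta^+_x u^n_j)(\delta^+_x u^{n-1}_j)$, which is precisely the telescoping difference of the cross term $h\sum_j(\delta^+_x u^n_j)(\delta^+_x u^{n+1}_j)$ appearing in $\tilde E^n$. All other terms coincide with the CNFD case, so $\tilde E^n\equiv\tilde E^0$ follows in the same way.

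There is no genuine analytic obstacle here; the argument is purely algebraic. The only points requiring care are bookkeeping: ensuring the summation-by-parts boundary terms vanish (guaranteed by the periodicity $u^n_0=u^n_M$, $u^n_{-1}=u^n_{M-1}$), correctly pairing the telescoping indices so that the temporal kinetic part appears as $\|\delta^+_t u^n\|^2_{l^2}$ using the convention $\delta^+_t u^{n-1}=\delta^-_t u^n$, and matching the non-symmetric spatial grouping of SIFD1 against the cross-product form of $\tilde E^n$ rather than the averaged squares of $E^n$. The single conceptual ingredient that makes everything close exactly is the compatibility of the discrete nonlinearity $G$ with the quartic potential $F$ through the identity $G(v,w)(v-w)=F(v)-F(w)$.
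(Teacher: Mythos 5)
Your proof is correct, and it is exactly the standard discrete energy argument (multiply by $u^{n+1}_j-u^{n-1}_j$, sum, use periodic summation by parts, telescoping identities, and $G(v,w)(v-w)=F(v)-F(w)$) that the paper itself omits, deferring to the references \cite{BD, LV, SV} where this computation is carried out. No gaps; the identification of the SIFD1 cross term $h\sum_j(\delta^+_x u^n_j)(\delta^+_x u^{n+1}_j)$ as the telescoping quantity is handled correctly.
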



Based on Lemma \ref{lemma:ec}, we can show the unique solvability of the CNFD \eqref{eq:CNFD_WNE} at each time step as follows.

\begin{lemma}
(solvability of CNFD) For any given $u^n, u^{n-1}$ ($n\geq 1$), the solution $u^{n+1}$ of the CNFD  \eqref{eq:CNFD_WNE} is unique at each time step.
\label{lemma:weaksolvablity}
\end{lemma}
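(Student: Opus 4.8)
The plan is to recast the one-step CNFD update \eqref{eq:CNFD_WNE} as the Euler--Lagrange equation of a strictly convex, coercive functional on the finite-dimensional space $X_M$, so that existence and uniqueness of $u^{n+1}$ follow at once from the existence of a unique minimizer. Regarding $u^{n-1}$ and $u^n$ as given and writing $v:=u^{n+1}\in X_M$ for the unknown, I would first multiply \eqref{eq:CNFD_WNE} by $\tau^2$ and collect all quantities depending only on the data on the right-hand side, reducing the scheme to
\begin{equation*}
v_j-\tfrac{\tau^2}{2}\delta^2_x v_j+\tfrac{\tau^2}{2}v_j+\varepsilon^2\tau^2\,G\!\left(v_j,u^{n-1}_j\right)=c_j,\qquad j=0,1,\ldots,M-1,
\end{equation*}
where $c\in X_M$ is a fixed vector determined by $u^{n-1},u^n$.

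The decisive structural observation is that $G(\cdot,w)$ is nondecreasing and is itself a potential derivative. Indeed, $G(v,w)=\tfrac14(v+w)(v^2+w^2)$, so $\partial_v G(v,w)=\tfrac14(3v^2+2vw+w^2)=\tfrac14\big(2v^2+(v+w)^2\big)\ge0$, and $G(v,w)=\partial_v P(v;w)$ for the convex function $P(v;w)=\tfrac1{16}v^4+\tfrac1{12}v^3 w+\tfrac18 v^2 w^2+\tfrac14 v w^3$. I would then introduce
\begin{equation*}
\mathcal{E}(v):=\tfrac12(Lv,v)+\varepsilon^2\tau^2\,h\sum_{j=0}^{M-1}P\!\left(v_j;u^{n-1}_j\right)-(c,v),\qquad Lv:=v-\tfrac{\tau^2}{2}\delta^2_x v+\tfrac{\tau^2}{2}v,
\end{equation*}
and verify by a direct computation, using the summation-by-parts identity $(-\delta^2_x v,v)=\|\delta^+_x v\|^2_{l^2}$ under the periodic boundary condition, that $\nabla\mathcal{E}(v)=0$ reproduces exactly the reduced system above.

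It then remains to show $\mathcal{E}$ has a unique minimizer. Since $(Lv,v)=(1+\tfrac{\tau^2}{2})\|v\|^2_{l^2}+\tfrac{\tau^2}{2}\|\delta^+_x v\|^2_{l^2}\ge\|v\|^2_{l^2}$, the operator $L$ is symmetric positive definite, so $\tfrac12(Lv,v)$ is strictly convex; adding the convex terms $P(\cdot\,;u^{n-1}_j)$ and the affine term $-(c,v)$ keeps $\mathcal{E}$ strictly convex. Coercivity $\mathcal{E}(v)\to+\infty$ as $\|v\|_{l^2}\to\infty$ holds because the quadratic part grows like $\|v\|^2_{l^2}$, the quartic terms $P$ are bounded below, and $-(c,v)$ is only linear in $v$. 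A continuous, strictly convex, coercive functional on the finite-dimensional $X_M$ attains its infimum at a unique point $v^\ast$, which by strict convexity is its only critical point; hence the CNFD admits the unique solution $u^{n+1}=v^\ast$. I expect the only delicate step to be locating the discrete potential $P$ and checking that $\nabla\mathcal{E}=0$ is precisely the scheme together with the sign $\partial_v G\ge0$; once this gradient/monotone structure is in hand the remainder is routine convex analysis. Alternatively, the same monotonicity yields uniqueness immediately from $(T(v)-T(\tilde v),v-\tilde v)\ge\|v-\tilde v\|^2_{l^2}$ for the map $T(v):=Lv+\varepsilon^2\tau^2 G(v,u^{n-1})$, with existence supplied by Brouwer's fixed-point theorem.
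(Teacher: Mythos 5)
Your proposal is correct, and its central object is the same as the paper's: your functional $\mathcal{E}(v)$ coincides, up to the positive factor $h\tau^2$ and the relabelling $v=u^{n+1}$, with the functional $S(v)$ the paper constructs, and your potential $P(v;w)$ with $\partial_v P=G(v,w)=\tfrac14(v+w)(v^2+w^2)$ and $\partial_v G=\tfrac14\bigl(2v^2+(v+w)^2\bigr)\ge 0$ is exactly the convexity computation underlying the paper's claim that $S$ is strictly convex. The difference is in how the two halves of the lemma are obtained. The paper proves existence separately, by applying the Brouwer fixed point theorem to the map $K^n(v)=v-u^n-\tfrac{\tau^2}{2}F^n(v)$ after showing $(K^n(v),v)/\|v\|_{l^2}\to\infty$, and then uses the strictly convex functional only for uniqueness (it also records an $l^\infty$ bound on $u^n$ via energy conservation and the discrete Sobolev inequality, which in fact is not needed for the convexity argument). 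You instead get existence and uniqueness in one stroke from the direct method: strict convexity plus coercivity of $\mathcal{E}$ on the finite-dimensional $X_M$ gives a unique minimizer, which is the unique critical point and hence the unique solution of the scheme. Your route is slightly more economical and makes the monotone structure explicit (your closing remark that $(T(v)-T(\tilde v),v-\tilde v)\ge\|v-\tilde v\|_{l^2}^2$ already forces uniqueness is a clean one-line alternative); the paper's route has the minor advantage of not requiring one to exhibit the discrete potential at all for the existence half. Both are complete proofs.
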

\begin{proof}
Firstly, we prove the existence of the solution for the CNFD \eqref{eq:CNFD_WNE}. To simplify the notations,  we denote the grid function $[\![u]\!]^n \in X_M $ with
\begin{equation}
[\![u]\!]^n_j = \frac{u^{n+1}_j + u^{n-1}_j}{2},\quad j = 0, 1, \ldots, M,\quad n \geq 1.
\end{equation}
For any $u^{n-1}, u^n, u^{n+1} \in X_M$, we rewrite the CNFD \eqref{eq:CNFD_WNE} as
\begin{equation}
[\![u]\!]^n = u^n + \frac{\tau^2}{2} F^n([\![u]\!]^n), \quad n \geq 1,
\end{equation}
where $F^n : X_M \to X_M$ with
\begin{equation}
F^n_j(v) = \delta^2_x v_j-\left[ 1 + \frac{\varepsilon^2}{2}(|u^{n-1}_j|^2+|2v_j - u^{n-1}_j|^2)\right]v_j ,\quad j = 0, 1, \ldots, M,\quad n \geq 1.
\end{equation}

Define a map $K^n : X_M \to X_M$ as
\begin{equation}
K^n(v) = v - u^n - \frac{\tau^2}{2}F^n(v), \quad v \in X_M, \quad n \geq 1.
\end{equation}
It is obvious that $K^n$ ($n \geq 1$) is continuous from $X_M$ to $X_M$. Moreover, the fact
\begin{equation}
\begin{split}
\left(K^n(v), v\right) & = \|v\|^2_{l^2} - (u^n, v) + \frac{\tau^2}{2}\left[\|\delta^+_x v\|^2_{l^2} + \|v\|^2_{l^2} + \frac{\varepsilon^2}{2}\left(|u^{n-1}|^2+|2v - u^{n-1}|^2, v^2\right)\right]\\
& \geq \left( \|v\|_{l^2} -  \|u^n\|_{l^2}\right) \|v\|_{l^2},\quad n \geq 1,
\end{split}
\end{equation}
implies
\begin{equation}
\lim_{\|v\|_{l^2} \to \infty}\frac{\left(K^n(v), v\right)}{ \|v\|_{l^2} } = \infty,\quad n \geq 1.
\end{equation}
Then, we can conclude that there exists a solution $v^{\ast}$ such that $K^n(v^{\ast}) = 0$ by applying the Brouwer fixed point theorem \cite{BC2, BS, L}. In other words, the CNFD \eqref{eq:CNFD_WNE} is solvable.

Now, we proceed to verify the uniqueness.
From \eqref{eq:dis_En}, we can get
\begin{equation}
\|u^{n}\|^2_{l^2} + \|\delta^{+}_x u^{n}\|^2_{l^2} \leq 2 E^n = 2 E^0, \quad n \geq 0.
\end{equation}
Hence, by employing the discrete Sobolev inequality \cite{BC2, V}, we can obtain
\begin{equation}
\| u^{n}\|_{l^{\infty}} \lesssim  \|u^{n}\|_{l^2} +  \|\delta^+_x u^{n}\|_{l^2} \lesssim  \sqrt{E^0},\quad n\geq 0.
\label{eq:nbound}
\end{equation}

For any $v \in X_M$, we define a functional $S(v) : X_M \to \mathbb{R}$ as
\begin{equation}
\begin{split}
S(v) : = & \sum^{M-1}_{j=0}\left[ \frac{-2u^n_j + u^{n-1}_j}{\tau^2} - \frac{1}{2}\delta^2_x u^{n-1}_j  + \frac{1}{2} u^{n-1}_j  + \frac{\varepsilon^2}{4}\left(u^{n-1}_j\right)^3\right]v_j  + \frac{1}{4} \sum^{M-1}_{j=0} \left(\delta^+_x v_j\right)^2\\
& + \sum^{M-1}_{j=0} \left\{\left[\frac{1}{2\tau^2} + \frac{1}{4} + \frac{\varepsilon^2}{8} \left(u^{n-1}_j\right)^2\right]v_j^2 +  \frac{\varepsilon^2}{12}u^{n-1}_j v_j^3 +\frac{\varepsilon^2}{16}v_j^4 \right\}.
\end{split}
\end{equation}
It is easy to check that $S(v)$ is strictly convex with the gradient of it denoted as $\nabla S(v):=[\partial_{v_0}S(v),\ldots,$ $\partial_{v_M}S_M(v)]^T$ turning out to be
\begin{equation}
\partial_{v_j}S(v) = \frac{v_j-2u_j^n+u_j^{n-1}}{\tau^2}- \frac{1}{2}\delta^{2}_x \left(v_j+u^{n-1}_j\right)+ \frac{1}{2}\left(v_j+u^{n-1}_j \right) +  \varepsilon^2G\left(v_j, u^{n-1}_j \right).
\end{equation}
By the strict convexity of $S(v)$, we can get the uniqueness of $\nabla S(v) = 0$, which yields  the uniqueness of $u^{n+1}\in X_M$ immediately. Thus, the proof is completed. 
\end{proof}

\begin{remark}
The solvability of the SIFD1 \eqref{eq:SIFD1_WNE} can be obtain similarly to the CNFD \eqref{eq:CNFD_WNE} in Lemma 2.3. There exists a unique solution for the SIFD2 due to the fact that it solves a linear system with a strictly diagonally dominant matrix. The solvability and uniqueness for \eqref{eq:LFFD_WNE} are straightforward since it is explicit.
\end{remark}

\section{Error estimates}

In this section, we will establish error bounds of the FDTD methods.

\subsection{Main results}
Motivated by the analytical results in \cite{K, KT, LZ, O, TY,DS,D} and references therein, 
we make the following assumptions on the exact solution  $u$ of the NKGE (\ref{eq:21}) up to the time $t=T_0/\varepsilon^2$:
\begin{equation*}
(A)
\begin{split}
u \in \ & C([0, T_0/\varepsilon^2]; W_p^{4, \infty}) \cap  C^2([0, T_0/\varepsilon^2]; W^{2, \infty})  \cap C^3([0, T_0/\varepsilon^2]; W^{1, \infty}) \cap C^4([0, T_0/\varepsilon^2]; L^{\infty}), \\ &\quad\left\|\frac{\partial^{r+q}}{\partial t^r \partial x^q} u(x, t)\right\|_{L^{\infty}}  \lesssim 1,\quad 0 \leq r \leq 4,\quad 0 \leq r+q \leq 4,
\end{split}
\end{equation*}
here $L^\infty = L^\infty([0, T_0/\varepsilon^2]; L^\infty)$ and $W^{m, \infty}_p = \{ u \in W^{m, \infty}| \frac{\partial^l}{\partial x^l} u(a) = \frac{\partial^l}{\partial x^l} u(b),\quad 0\leq l< m \}$ for $m \geq 1$.

Denote  $M_0 = \sup_{\varepsilon\in(0,1]} \| u(x, t)\|_{L^{\infty}}$ and the grid `error' function ${e}^n \in X_M (n \geq 0)$ as
\begin{equation}
e^n_j = u(x_j, t_n) - u^n_j, \quad j = 0, 1, \ldots, M, \quad n = 0, 1, 2,\ldots,
\label{eq:verrdef}
\end{equation}
where $u^n\in X_M$ is the numerical approximation of the NKGE (\ref{eq:21}). 

For the CNFD (\ref{eq:CNFD_WNE}), we can establish the following error estimates (see its detailed proof in Section 3.2):
\begin{theorem}
Under the assumption ($A$), there exist constants $h_0 > 0$ and $\tau_0 > 0$ sufficiently small and independent of $\varepsilon$, such that, for any $0 < \varepsilon \leq 1$, when $0 < h \leq h_0\varepsilon^{\beta/2}$ and $0 < \tau \leq \tau_0\varepsilon^{\beta/2}$, we have the following error estimates for the CNFD (\ref{eq:CNFD_WNE}) with (\ref{eq:wib}) and (\ref{eq:w1})
\begin{align}
\|e^n\|_{l^2} +\|\delta^{+}_x e^n\|_{l^2} \lesssim {h^2}{\varepsilon^{-\beta}} + {\tau^2}{\varepsilon^{-\beta}},\quad  \|u^n\|_{l^{\infty}} \leq 1 + M_0,\quad 0 \leq n \leq {T_0\varepsilon^{-\beta}}/{\tau}.
\label{eq:CNFD}
\end{align}
\label{thm:CNFD_WNE}
\end{theorem}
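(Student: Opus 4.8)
The plan is to combine the energy method with the cut-off technique advertised in the introduction, exploiting that the CNFD conserves the discrete energy (Lemma \ref{lemma:ec}). I would proceed in four stages: a local truncation error analysis, a cut-off of the cubic nonlinearity, an energy estimate for the error of the cut-off scheme together with a discrete Gronwall argument, and finally the removal of the cut-off through a one-dimensional discrete Sobolev inequality.

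First I would insert the exact solution $u(x_j,t_n)$ into the CNFD \eqref{eq:CNFD_WNE} and define the local truncation error $\xi^n_j$. Expanding the finite difference operators by Taylor's formula and using assumption $(A)$, I expect $\|\xi^n\|_{l^2}+\|\delta^{+}_x\xi^n\|_{l^2}\lesssim \tau^2+h^2$; crucially, the contribution of the nonlinearity, coming from approximating $(u^n)^3$ by $G(u(x_j,t_{n+1}),u(x_j,t_{n-1}))$ with $G(v,w)=(v^2+w^2)(v+w)/4$, carries an extra factor $\varepsilon^2$ and is therefore harmless. The nonstandard first step \eqref{eq:w1} must be treated separately, and I would check that $\|e^1\|_{l^2}+\|\delta^{+}_x e^1\|_{l^2}\lesssim \tau(\tau^2+h^2)$, so that the startup energy is of the right order.

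Next I would fix the cut-off level $B:=1+M_0$ and replace $s^3$ by a globally Lipschitz modification $f_B$ agreeing with $s^3$ on $[-B,B]$, with induced $G_B(v,w)=(F_B(v)-F_B(w))/(v-w)$; the resulting scheme is still conservative and uniquely solvable (Lemmas \ref{lemma:ec}--\ref{lemma:weaksolvablity}), and $G_B$ satisfies a global Lipschitz bound $|G_B(v_1,w_1)-G_B(v_2,w_2)|\lesssim |v_1-v_2|+|w_1-w_2|$ with constant independent of the arguments. Since $\|u(\cdot,t)\|_{L^\infty}\le M_0<B$, the exact solution solves the $G_B$-scheme up to $\xi^n$, so for $e^n_B=u(x_j,t_n)-u^n_B$ I would form the error equation, multiply by $e^{n+1}_B-e^{n-1}_B$, sum in $j$ and telescope to obtain the discrete error energy $\mathcal{E}^n=\|\delta^{+}_t e^n_B\|_{l^2}^2+\frac{1}{2}(\|\delta^{+}_x e^{n+1}_B\|_{l^2}^2+\|\delta^{+}_x e^{n}_B\|_{l^2}^2)+\frac{1}{2}(\|e^{n+1}_B\|_{l^2}^2+\|e^{n}_B\|_{l^2}^2)$. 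The nonlinear source term is controlled by $\varepsilon^2\tau\,\mathcal{E}^k$ via the global Lipschitz property, while the truncation source term is handled by summation by parts in time so that no power of $\tau$ is lost, leaving, after Young's inequality, a contribution bounded by $(\tau^2+h^2)^2 t_n^2$. This yields $\mathcal{E}^n\lesssim \mathcal{E}^0+(\tau^2+h^2)^2 t_n^2+\varepsilon^2\tau\sum_{k=1}^{n}\mathcal{E}^k$.

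The decisive point is the discrete Gronwall step, and I expect it to be the main obstacle. Since $t_n\le T_0\varepsilon^{-\beta}$ with $0\le\beta\le 2$, the accumulated coefficient obeys $\varepsilon^2 t_n\le T_0\varepsilon^{2-\beta}\le T_0$, so the Gronwall factor $e^{C\varepsilon^2 t_n}$ remains $O(1)$; combined with $t_n^2\lesssim\varepsilon^{-2\beta}$ this gives $\sqrt{\mathcal{E}^n}\lesssim(\tau^2+h^2)\varepsilon^{-\beta}$, hence $\|e^n_B\|_{l^2}+\|\delta^{+}_x e^n_B\|_{l^2}\lesssim h^2\varepsilon^{-\beta}+\tau^2\varepsilon^{-\beta}$. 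It is precisely the $\varepsilon^2$ weighting of the nonlinearity that keeps this factor bounded over the long interval, and the delicate balance is to extract $t_n^2\lesssim\varepsilon^{-2\beta}$ from the truncation accumulation while absorbing the nonlinear growth into an $O(1)$ exponential. Finally, to remove the cut-off I would apply the discrete Sobolev inequality $\|e^n_B\|_{l^\infty}\lesssim\|e^n_B\|_{l^2}+\|\delta^{+}_x e^n_B\|_{l^2}\lesssim(\tau^2+h^2)\varepsilon^{-\beta}\le h_0^2+\tau_0^2$; choosing $h_0,\tau_0$ small makes the right-hand side $\le 1$, so $\|u^n_B\|_{l^\infty}\le M_0+1=B$ and the cut-off is never active. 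Consequently $u^n_B$ solves the original CNFD \eqref{eq:CNFD_WNE}, i.e. $u^n_B=u^n$, which delivers both the error bound \eqref{eq:CNFD} and the a priori bound $\|u^n\|_{l^\infty}\le 1+M_0$.
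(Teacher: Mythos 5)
Your proposal is correct and follows essentially the same route as the paper's proof: truncate the cubic nonlinearity to a globally Lipschitz function, bound the local truncation error by $h^2+\tau^2$, run the energy method with a discrete Gronwall inequality whose exponent stays $O(1)$ because $\varepsilon^{2}t_n\lesssim\varepsilon^{2-\beta}\le 1$ (the paper uses the equivalent weight $\varepsilon^{\beta}\tau$), and remove the cut-off via the 1D discrete Sobolev inequality together with the unique solvability of the CNFD. The only differences are cosmetic: the paper applies the cut-off to $\theta=u^2$ with $B=(1+M_0)^2$ and distributes $\varepsilon^{\pm\beta}$ weights inside Young's inequality rather than accumulating the truncation source as $(h^2+\tau^2)^2t_n^2$, but both yield the same $\varepsilon^{-2\beta}$ factor and the same final bounds.
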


\vspace{-0.3in}
For the LFFD (\ref{eq:LFFD_WNE}), the error estimates can be established as follows (see its detailed proof in Section 3.3):
\begin{theorem}
 Assume $\tau\le \frac{1}{2}\min\{1,h\}$ and  under the assumption ($A$), there exist constants $h_0 > 0$ and $\tau_0 > 0$ sufficiently small and independent of $\varepsilon$, such that for any $0 < \varepsilon \leq 1$, when  $0 < h \leq h_0\varepsilon^{\beta/2}$ and $0 < \tau \leq \tau_0 \varepsilon^{\beta/2}$ and under the stability condition (\ref{eq:con4}), we have the error estimates for the LFFD (\ref{eq:LFFD_WNE}) with (\ref{eq:wib}) and (\ref{eq:w1}) as
\begin{align}
\|e^n\|_{l^2} +\|\delta^{+}_x e^n\|_{l^2} \lesssim {h^2}{\varepsilon^{-\beta}} + {\tau^2}{\varepsilon^{-\beta}},\quad  \|u^n\|_{l^{\infty}} \leq 1 + M_0,\quad 0 \leq n \leq {T_0\varepsilon^{-\beta}}/{\tau}.
\label{eq:LFFD}
\end{align}
\label{thm:LFFD_WNE}
\end{theorem}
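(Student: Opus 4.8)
The plan is to combine the discrete energy method with an induction argument that simultaneously propagates the $l^2$/semi-$H^1$ error bound and an $l^\infty$ bound on the numerical solution; the latter is exactly what is needed to control the cubic nonlinearity and to keep $\sigma_{\rm max}$ bounded in the stability condition \eqref{eq:con4}. First I would define the local truncation error $\xi^n_j$ by inserting the exact solution $u(x_j,t_n)$ into the LFFD scheme \eqref{eq:LFFD_WNE}; Taylor expansion together with assumption ($A$) gives $\|\xi^n\|_{l^2}\lesssim \tau^2+h^2$ uniformly in $\varepsilon$, and similarly $\|\xi^{n}-\xi^{n-1}\|_{l^2}\lesssim \tau(\tau^2+h^2)$. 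Subtracting \eqref{eq:LFFD_WNE} from the scheme satisfied by the exact solution yields the error equation $\delta^2_t e^n_j-\delta^2_x e^n_j+e^n_j+\varepsilon^2[(u(x_j,t_n))^3-(u^n_j)^3]=\xi^n_j$, in which the nonlinear difference factors as $[(u(x_j,t_n))^2+u(x_j,t_n)u^n_j+(u^n_j)^2]\,e^n_j$.

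Next I would set up the discrete energy tailored to the explicit leap-frog operator. Multiplying the error equation by $h(e^{n+1}_j-e^{n-1}_j)$ and summing over $j$, the three linear terms telescope into $\mathcal{S}^n-\mathcal{S}^{n-1}$ with $\mathcal{S}^n:=\|\delta^+_t e^n\|^2_{l^2}+(\delta^+_x e^n,\delta^+_x e^{n+1})+(e^n,e^{n+1})$, leaving only the truncation and nonlinear terms on the right-hand side. The key coercivity step is to show $\mathcal{S}^n$ is positive definite: writing each cross term as $\tfrac12\|\cdot\|^2_{l^2}+\tfrac12\|\cdot\|^2_{l^2}-\tfrac{\tau^2}{2}\|\delta^+_t(\cdot)\|^2_{l^2}$ and invoking the inverse inequality $\|\delta^+_x\delta^+_t e^n\|^2_{l^2}\le \tfrac{4}{h^2}\|\delta^+_t e^n\|^2_{l^2}$, the coefficient of $\|\delta^+_t e^n\|^2_{l^2}$ becomes $1-\tfrac{2\tau^2}{h^2}-\tfrac{\tau^2}{2}$, bounded below by $\tfrac38$ under the hypothesis $\tau\le\tfrac12\min\{1,h\}$. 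Hence $\mathcal{S}^n\gtrsim \|\delta^+_t e^n\|^2_{l^2}+\|\delta^+_x e^{n+1}\|^2_{l^2}+\|e^{n+1}\|^2_{l^2}$, so that $\sqrt{\mathcal{S}^n}$ controls $\|e^{n+1}\|_{l^2}+\|\delta^+_x e^{n+1}\|_{l^2}$; here the two hypotheses $\tau\le\tfrac12\min\{1,h\}$ and \eqref{eq:con4} enter and are mutually consistent once $\sigma_{\rm max}$ is known to be bounded.

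The heart of the long time estimate is to avoid an exponentially large Gronwall factor. Under the induction hypothesis $\|u^m\|_{l^\infty}\le 1+M_0$ for $m\le n$, the nonlinear coefficient is bounded by $3(1+M_0)^2$, so the nonlinear contribution to $\mathcal{S}^n-\mathcal{S}^{n-1}$ is of size $\varepsilon^2\tau(\sqrt{\mathcal{S}^n}+\sqrt{\mathcal{S}^{n-1}})\sqrt{\mathcal{S}^n}$, carrying the decisive factor $\varepsilon^2$ inherited from the weak nonlinearity. Rather than applying Young's inequality (which would produce an $O(\tau)$ Gronwall coefficient and hence a fatal factor $e^{C\varepsilon^{-\beta}}$), I would write $\mathcal{S}^n-\mathcal{S}^{n-1}=(\sqrt{\mathcal{S}^n}-\sqrt{\mathcal{S}^{n-1}})(\sqrt{\mathcal{S}^n}+\sqrt{\mathcal{S}^{n-1}})$ and divide by $(\sqrt{\mathcal{S}^n}+\sqrt{\mathcal{S}^{n-1}})$, obtaining the linear recursion $\sqrt{\mathcal{S}^n}-\sqrt{\mathcal{S}^{n-1}}\lesssim \tau\|\xi^n\|_{l^2}+\varepsilon^2\tau(\sqrt{\mathcal{S}^n}+\sqrt{\mathcal{S}^{n-1}})$. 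Summing and applying the discrete Gronwall inequality gives $\sqrt{\mathcal{S}^n}\lesssim (\sqrt{\mathcal{S}^0}+\sum_{m=1}^n\tau\|\xi^m\|_{l^2})\,e^{C\varepsilon^2 n\tau}$; since $n\tau\le T_0\varepsilon^{-\beta}$ the exponent is $C\varepsilon^{2-\beta}=O(1)$ for $\beta\le 2$, while $\sum_{m}\tau\|\xi^m\|_{l^2}\lesssim \varepsilon^{-\beta}(\tau^2+h^2)$. Together with $\sqrt{\mathcal{S}^0}\lesssim \tau^2+h^2$ (which follows from $e^0=0$ and the Taylor start \eqref{eq:w1}, giving $\|e^1\|_{l^2}\lesssim \tau^3+\tau^2h^2$), this delivers $\|e^n\|_{l^2}+\|\delta^+_x e^n\|_{l^2}\lesssim \varepsilon^{-\beta}(\tau^2+h^2)$.

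Finally I would close the induction. Assuming the error bound through step $n$, the discrete Sobolev inequality gives $\|e^n\|_{l^\infty}\lesssim \|e^n\|_{l^2}+\|\delta^+_x e^n\|_{l^2}\lesssim \varepsilon^{-\beta}(\tau^2+h^2)$; under the meshing strategy $h\le h_0\varepsilon^{\beta/2}$, $\tau\le\tau_0\varepsilon^{\beta/2}$ the right-hand side is $\lesssim \tau_0^2+h_0^2\le 1$ for $h_0,\tau_0$ sufficiently small, whence $\|u^n\|_{l^\infty}\le \|u(\cdot,t_n)\|_{l^\infty}+\|e^n\|_{l^\infty}\le M_0+1$. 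This recovers the induction hypothesis at step $n+1$ and keeps $\sigma_{\rm max}\le(1+M_0)^2$, so that \eqref{eq:con4} holds throughout. I expect the main obstacle to be precisely this coupling: the energy estimate needs the $l^\infty$ bound to control the cubic term and to validate \eqref{eq:con4}, while the $l^\infty$ bound is itself only available as a consequence of the error estimate. The resolution is that the meshing $h,\tau=O(\varepsilon^{\beta/2})$ makes the accumulated error $\varepsilon^{-\beta}(\tau^2+h^2)$ of size $O(1)$ rather than unbounded, so the induction never leaves the region where both the nonlinearity and the CFL constant are uniformly controlled.
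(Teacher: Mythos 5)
Your proposal is correct and follows essentially the same route as the paper: a simultaneous induction on the $l^2$/semi-$H^1$ error bound and the $l^\infty$ bound of $u^n$, the same leap-frog discrete energy obtained by testing the error equation against $h(e^{n+1}_j-e^{n-1}_j)$, the same coercivity argument via the inverse inequality under $\tau\le\frac12\min\{1,h\}$ (you get the constant $3/8$, the paper gets $1/4$ from an equivalent rewriting of the cross terms), and the same closing of the loop through the discrete Sobolev inequality and the meshing strategy $h,\tau=O(\varepsilon^{\beta/2})$. The one place you genuinely deviate is the Gronwall step: you divide the increment $\mathcal{S}^n-\mathcal{S}^{n-1}$ by $\sqrt{\mathcal{S}^n}+\sqrt{\mathcal{S}^{n-1}}$ to obtain a linear recursion for $\sqrt{\mathcal{S}^n}$ with coefficient $C\varepsilon^{2}\tau$, whereas the paper keeps the recursion at the level of $\mathcal{S}^n$ by applying a \emph{weighted} Young inequality, putting $\tau\varepsilon^{-\beta}$ on $\|\xi^n\|_{l^2}^2+\|\eta^n\|_{l^2}^2$ and $\tau\varepsilon^{\beta}$ on $\|\delta_t^+e^n\|_{l^2}^2+\|\delta_t^+e^{n-1}\|_{l^2}^2$, which yields the Gronwall coefficient $C\varepsilon^{\beta}\tau$ and hence the factor $e^{CT_0}$ over $n\tau\le T_0\varepsilon^{-\beta}$ steps. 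So your remark that Young's inequality ``would produce a fatal factor $e^{C\varepsilon^{-\beta}}$'' is accurate only for the unweighted version; the paper's weighted split works and delivers the identical bound $\varepsilon^{-\beta}(h^2+\tau^2)$, at the cost of the source term accumulating as $\varepsilon^{-2\beta}(h^2+\tau^2)^2$ inside $\mathcal{S}^n$ rather than $\varepsilon^{-\beta}(h^2+\tau^2)$ inside $\sqrt{\mathcal{S}^n}$. Both variants are valid; yours is marginally cleaner, the paper's reuses verbatim the machinery already set up for the CNFD proof in Section 3.2.
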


Similarly, for the SIFD1 (\ref{eq:SIFD1_WNE}) and SIFD2 (\ref{eq:SIFD2_WNE}), we have the following error estimates (their proofs
are quite similar and thus they are omitted for brevity):
\begin{theorem}
Assume $\tau \lesssim h$ and  under the assumption ($A$), there exist constants $h_0 > 0$ and $\tau_0 > 0$ sufficiently small and independent of $\varepsilon$, such that for any $0 < \varepsilon \leq 1$, when  $0 < h \leq h_0\varepsilon^{\beta/2}$, $0 < \tau \leq \tau_0 \varepsilon^{\beta/2}$ and under the stability condition (\ref{eq:con2}), we have the following error estimates for the SIFD1 (\ref{eq:SIFD1_WNE}) with (\ref{eq:wib}) and (\ref{eq:w1})
\begin{equation}
\|e^n\|_{l^2} +\|\delta^{+}_x e^n\|_{l^2} \lesssim h^2\varepsilon^{-\beta} + \tau^2\varepsilon^{-\beta},\quad \|u^n\|_{l^{\infty}} \leq 1 + M_0, \quad 0 \leq n \leq T_0\varepsilon^{-\beta}/\tau.
\label{eq:SIFD1}
\end{equation}
\label{thm:SIFD1_WNE}
\end{theorem}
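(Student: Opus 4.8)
The plan is to follow the conservative-method route already used for the CNFD in Theorem \ref{thm:CNFD_WNE}, namely the cut-off of the nonlinearity combined with the discrete energy method, while paying particular attention to the non-sign-definite cross term $h\sum_j(\delta^+_x u^n_j)(\delta^+_x u^{n+1}_j)$ that appears in the conserved energy $\tilde{E}^n$ of Lemma \ref{lemma:ec}. This cross term is the genuinely new feature of the SIFD1 \eqref{eq:SIFD1_WNE} relative to the CNFD: it stems from treating the Laplacian explicitly, and it is exactly what forces the stability condition \eqref{eq:con2} (equivalently $\tau\lesssim h$) rather than unconditional stability.

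First I would introduce a smooth truncation of $F(v)=v^4/4$ outside the ball of radius $B:=1+M_0$, producing a modified discrete nonlinearity $\tilde G$ that is globally Lipschitz in both arguments with a Lipschitz constant independent of $\varepsilon,h,\tau$; the truncated scheme is unconditionally solvable by the argument of Lemma \ref{lemma:weaksolvablity} and coincides with SIFD1 \eqref{eq:SIFD1_WNE} whenever $\|u^n\|_{l^\infty}\le B$. Because $\tilde G$ is globally Lipschitz, the subsequent energy estimate needs no a priori $l^\infty$ control of $u^n$, which is precisely what the cut-off buys. Next I would insert the exact solution into the scheme and Taylor-expand, so that assumption $(A)$ yields both $\|\xi^n\|_{l^2}\lesssim h^2+\tau^2$ and the time-difference bound $\|\delta^+_t\xi^n\|_{l^2}\lesssim h^2+\tau^2$ for the local truncation error $\xi^n$, with the Taylor first step \eqref{eq:w1} giving a comparably small initial error energy. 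Subtracting from the truncated scheme gives the error equation for $e^n$; testing it against $(e^{n+1}-e^{n-1})/(2\tau)$, summing over $j$ and telescoping in $n$ produces a discrete error energy $\mathcal{E}^n$ whose coercivity $\mathcal{E}^n\gtrsim\|\delta^+_t e^n\|_{l^2}^2+\|\delta^+_x e^n\|_{l^2}^2+\|e^n\|_{l^2}^2$ follows from \eqref{eq:con2}, using $\delta^+_x e^{n+1}=\delta^+_x e^n+\tau\,\delta^+_x\delta^+_t e^n$ together with the inverse inequality $\|\delta^+_x v\|_{l^2}\le\frac{2}{h}\|v\|_{l^2}$ to dominate the indefinite cross term.

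The decisive step, which I expect to be the main obstacle, is the Gronwall argument. The nonlinear contribution is controlled by the Lipschitz constant of $\tilde G$ and carries the small prefactor $\varepsilon^2$, so it feeds only a term $C\varepsilon^2\tau\sum_k\mathcal{E}^k$ into the estimate; the truncation contribution $\tau\sum_k(\xi^k,e^{k+1}-e^{k-1})$ must instead be treated by summation by parts in time, \emph{not} by a direct Young's inequality, since the latter would inject $\|\delta^+_t e^k\|_{l^2}^2$ with an $O(1)$ coefficient into the Gronwall sum and force exponential blow-up over $t\sim\varepsilon^{-\beta}$. After summation by parts the boundary and $\delta^+_t\xi^k$ terms are dominated by $(\max_k\sqrt{\mathcal{E}^k})\,t_n(h^2+\tau^2)$, and a Young split gives $\mathcal{E}^n\lesssim\mathcal{E}^0+t_n^2(h^2+\tau^2)^2+C\varepsilon^2\tau\sum_k\mathcal{E}^k$. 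Applying the discrete Gronwall inequality and using $t_n\le T_0\varepsilon^{-\beta}$ with $0\le\beta\le2$ keeps the growth factor $e^{C\varepsilon^2 t_n}\le e^{CT_0\varepsilon^{2-\beta}}\le e^{CT_0}$ uniformly bounded, whence $\mathcal{E}^n\lesssim\varepsilon^{-2\beta}(h^2+\tau^2)^2$, that is, $\|e^n\|_{l^2}+\|\delta^+_x e^n\|_{l^2}\lesssim h^2\varepsilon^{-\beta}+\tau^2\varepsilon^{-\beta}$. The interplay of the weak-nonlinearity prefactor $\varepsilon^2$ (giving a bounded exponential exactly for $\beta\le2$) with the quadratic-in-$t_n$ accumulation of the truncation error (giving the power $\varepsilon^{-\beta}$ rather than $\varepsilon^{-\beta/2}$) is the crux of the whole estimate.

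Finally, the meshing constraints $h\le h_0\varepsilon^{\beta/2}$ and $\tau\le\tau_0\varepsilon^{\beta/2}$ make this bound at most $C(h_0^2+\tau_0^2)$, so the discrete Sobolev inequality $\|e^n\|_{l^\infty}\lesssim\|e^n\|_{l^2}+\|\delta^+_x e^n\|_{l^2}\le1$ for $h_0,\tau_0$ chosen small enough. Consequently $\|u^n\|_{l^\infty}\le M_0+1=B$, the cut-off is never triggered, and the truncated scheme coincides identically with the genuine SIFD1 \eqref{eq:SIFD1_WNE}; this transfers the error bound back to the original scheme and yields both assertions of the theorem.
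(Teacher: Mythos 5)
Your overall architecture is the one the paper intends for the SIFD1: since it is a conservative scheme, the paper's (omitted) proof follows the cut-off route spelled out for the CNFD in Section 3.2, and you correctly identify the only genuinely new ingredient, namely the indefinite cross term $h\sum_j(\delta^+_x e^n_j)(\delta^+_x e^{n+1}_j)$ in the error energy, whose coercivity under $\tau\lesssim h$ (via $\delta^+_x e^{n+1}=\delta^+_x e^n+\tau\,\delta^+_x\delta^+_t e^n$ and the inverse inequality) is exactly how the hypotheses $\tau\lesssim h$ and (\ref{eq:con2}) enter. The truncation of the nonlinearity, the solvability of the truncated scheme, the Lipschitz bound on the nonlinear error with the $\varepsilon^{2}$ prefactor, and the final de-truncation via the discrete Sobolev inequality and the meshing conditions all match the paper's argument for Theorem \ref{thm:CNFD_WNE}.

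The one step that does not go through as written is your treatment of the source term $\tau\sum_k(\xi^k,e^{k+1}-e^{k-1})$. First, your reason for rejecting the direct estimate is mistaken: the paper does not use the unweighted Young inequality but the $\varepsilon^{\pm\beta}$-weighted one,
\[
h\sum_{j}\xi^n_j\bigl(e^{n+1}_j-e^{n-1}_j\bigr)\;\le\;\tau\varepsilon^{-\beta}\|\xi^n\|^2_{l^2}+\tau\varepsilon^{\beta}\left(\|\delta^+_t e^n\|^2_{l^2}+\|\delta^+_t e^{n-1}\|^2_{l^2}\right),
\]
so the error energy enters the Gronwall sum with coefficient $O(\tau\varepsilon^{\beta})$, the growth factor is $e^{Cn\tau\varepsilon^{\beta}}\le e^{CT_0}$ for $n\le T_0\varepsilon^{-\beta}/\tau$, and the source accumulates to $T_0\varepsilon^{-2\beta}(h^2+\tau^2)^2$ --- precisely the desired bound, with no blow-up (this is the computation in the proof of Theorem \ref{thm:WNE_cutoff}). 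Second, your substitute, summation by parts in time, rests on $\|\delta^+_t\xi^k\|_{l^2}\lesssim h^2+\tau^2$, which does not follow from assumption (A): the spatial part of $\xi^k$ is $\delta^2_xu(\cdot,t_k)-\partial_{xx}u(\cdot,t_k)=O(h^2\|\partial_x^4u\|_{L^\infty})$, and its divided difference in time is controlled only by $h^2\|\partial_t\partial_x^4u\|_{L^\infty}$, a fifth-order mixed derivative that (A) does not bound; without it one merely gets $O(h^2/\tau)$, which destroys the second-order rate in $h$. The fix is simply to drop the summation by parts and use the weighted Young split above; everything else in your argument then closes.
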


\begin{theorem}
Assume $\tau \lesssim h$ and under the assumption ($A$), there exist constants $h_0 > 0$ and $\tau_0 > 0$ sufficiently small and independent of $\varepsilon$, such that for any $0 < \varepsilon \leq 1$, when  $0 < h \leq h_0\varepsilon^{\beta/2}$, $0 < \tau \leq \tau_0 \varepsilon^{\beta/2}$ and under the stability condition (\ref{eq:con3}), we have the following error estimates for the SIFD2 (\ref{eq:SIFD2_WNE}) with (\ref{eq:wib}) and (\ref{eq:w1})
\begin{equation}
\|e^n\|_{l^2} +\|\delta^{+}_x e^n\|_{l^2} \lesssim h^2\varepsilon^{-\beta} + \tau^2\varepsilon^{-\beta},\quad \|u^n\|_{l^{\infty}} \leq 1 + M_0, \quad 0 \leq n \leq T_0\varepsilon^{-\beta}/\tau.
\label{eq:SIFD2}
\end{equation}
\label{thm:SIFD2_WNE}
\end{theorem}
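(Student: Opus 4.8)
The plan is to prove \eqref{eq:SIFD2} by a single induction on the time level $n$, carrying simultaneously the energy-norm error bound and the $l^\infty$ bound $\|u^n\|_{l^\infty}\le 1+M_0$; this coupling is forced on us because the SIFD2 scheme \eqref{eq:SIFD2_WNE} is nonconservative, so --- unlike the CNFD case, where \eqref{eq:nbound} follows from energy conservation --- no a priori $l^\infty$ control of $u^n$ is available except through the error estimate itself. First I would define the local truncation error $\xi^n$ by substituting the exact solution $u(x_j,t_n)$ into \eqref{eq:SIFD2_WNE}; Taylor expansion together with assumption $(A)$ gives $\|\xi^n\|_{l^2}\lesssim h^2+\tau^2$, with no hidden factor of $\varepsilon$, since the cubic term $\varepsilon^2(u^n_j)^3$ is the exact nonlinearity sampled at $t_n$ and contributes no discretization error. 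Subtracting \eqref{eq:SIFD2_WNE} then yields an error equation for $e^n$ whose only forcing, beyond $\xi^n$, is the cubic difference $\varepsilon^2[\,u(x_j,t_n)^3-(u^n_j)^3\,]$.

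For the energy estimate I would multiply the error equation by $(e^{n+1}_j-e^{n-1}_j)h$ and sum over $j$. Because SIFD2 averages \emph{both} the Laplacian and the linear mass term at the $n\pm1$ levels, summation by parts makes all linear contributions telescope exactly into the discrete energy
\[ \mathcal{S}^n := \|\delta^{+}_t e^n\|_{l^2}^2 + \frac{1}{2}\left(\|\delta^{+}_x e^{n+1}\|_{l^2}^2+\|\delta^{+}_x e^{n}\|_{l^2}^2\right) + \frac{1}{2}\left(\|e^{n+1}\|_{l^2}^2+\|e^{n}\|_{l^2}^2\right), \]
leaving on the right only the truncation and nonlinear forcings. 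The decisive structural point is that the cubic difference factors as $[\,u(x_j,t_n)^2+u(x_j,t_n)u^n_j+(u^n_j)^2\,]\,e^n_j$, so under the induction hypothesis $\|u^n\|_{l^\infty}\le 1+M_0$ it is bounded by $C\varepsilon^2\|e^n\|_{l^2}$; its energy increment is therefore $\lesssim \varepsilon^2\tau(\mathcal{S}^n+\mathcal{S}^{n-1})$ --- an $\varepsilon^2$-small Gronwall driver --- while the truncation term contributes $\lesssim \tau\|\xi^n\|_{l^2}(\sqrt{\mathcal{S}^n}+\sqrt{\mathcal{S}^{n-1}})$ after bounding $\|e^{n+1}-e^{n-1}\|_{l^2}\le \tau(\sqrt{\mathcal{S}^n}+\sqrt{\mathcal{S}^{n-1}})$.

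Dividing the resulting energy difference by $\sqrt{\mathcal{S}^n}+\sqrt{\mathcal{S}^{n-1}}$ gives $\sqrt{\mathcal{S}^n}-\sqrt{\mathcal{S}^{n-1}}\lesssim \varepsilon^2\tau(\sqrt{\mathcal{S}^n}+\sqrt{\mathcal{S}^{n-1}})+\tau\|\xi^n\|_{l^2}$; summing and applying the discrete Gronwall inequality yields $\sqrt{\mathcal{S}^n}\lesssim \big(\sqrt{\mathcal{S}^0}+\tau\sum_{k=1}^n\|\xi^k\|_{l^2}\big)\exp(C\varepsilon^2 n\tau)$. Here the long-time scaling closes cleanly: since $n\tau\le T_0\varepsilon^{-\beta}$, the exponent obeys $C\varepsilon^2 n\tau\le CT_0\varepsilon^{2-\beta}=O(1)$ \emph{precisely because} $\beta\le 2$, while $\tau\sum_{k=1}^n\|\xi^k\|_{l^2}\lesssim (n\tau)(h^2+\tau^2)\lesssim \varepsilon^{-\beta}(h^2+\tau^2)$; together with $\sqrt{\mathcal{S}^0}\lesssim h^2+\tau^2$ from the initialization \eqref{eq:w1}, this produces the asserted bound $\|e^n\|_{l^2}+\|\delta^{+}_x e^n\|_{l^2}\lesssim \varepsilon^{-\beta}(h^2+\tau^2)$.

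Finally I would close the induction: feeding this step-$(n{+}1)$ bound into the discrete Sobolev inequality gives $\|e^{n+1}\|_{l^\infty}\lesssim \varepsilon^{-\beta}(h^2+\tau^2)\le h_0^2+\tau_0^2$, which is at most $1$ once $h_0,\tau_0$ are small, using $h\le h_0\varepsilon^{\beta/2}$ and $\tau\le\tau_0\varepsilon^{\beta/2}$; hence $\|u^{n+1}\|_{l^\infty}\le 1+M_0$, which both justifies the constant in the cubic estimate and verifies the stability condition \eqref{eq:con3} at the next level, so the induction advances. I expect the main obstacle to be exactly this interlocking: because SIFD2 is nonconservative, the $l^\infty$ bound needed for the nonlinear estimate and for \eqref{eq:con3} is itself a consequence of the error bound, so the two must be propagated in lockstep, and one must simultaneously keep the Gronwall factor $O(1)$ (the $\varepsilon^2$ from the nonlinearity played against $\beta\le 2$) and extract exactly the $\varepsilon^{-\beta}$ amplification from the accumulated truncation error. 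The constraint $\tau\lesssim h$ enters here to keep the induced $l^\infty$ estimate below its threshold (and becomes essential under the inverse inequality used for the $d>1$ generalization).
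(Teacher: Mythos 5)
Your proposal is correct and follows essentially the approach the paper intends for this theorem: the paper omits the SIFD2 proof but explicitly designates the mathematical-induction argument of Section 3.3 (simultaneous propagation of the energy-norm error bound and the $l^{\infty}$ bound, with the cubic difference factored and controlled by the induction hypothesis, followed by the discrete Sobolev inequality) as the template for the nonconservative schemes, and since SIFD2 averages the Laplacian and mass terms at levels $n\pm1$, your CNFD-type energy $\mathcal{S}^n$ is exactly the right telescoping quantity. Your Gronwall bookkeeping (dividing by $\sqrt{\mathcal{S}^n}+\sqrt{\mathcal{S}^{n-1}}$ and exponentiating $C\varepsilon^{2}n\tau\lesssim\varepsilon^{2-\beta}$) differs cosmetically from the paper's Young-inequality splitting with weights $\varepsilon^{\mp\beta}$, but yields the same $\varepsilon^{-\beta}(h^{2}+\tau^{2})$ bound.
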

\vspace{-0.2in}
\begin{remark}
In 2D with $d = 2$ and 3D with
$d = 3$ cases, the above theorems are still valid under the technical conditions $0 < h \lesssim \varepsilon^{\beta/2}\sqrt{C_d(h)}$ and $0 < \tau \lesssim \varepsilon^{\beta/2}\sqrt{C_d(h)}$ where $C_d(h)=1/|\ln h|$ when $d=2$,  and $C_d(h)=h^{1/2}$ when $d=3$.
\end{remark}

Hence, the four FDTD methods studied here share the same spatial/temporal resolution capacity  for the NKGE (\ref{eq:21}) up to the long time at $O(\varepsilon^{-\beta})$ with $0\le \beta \leq 2$. In fact, given an accuracy bound $\delta_0>0$, the $\varepsilon$-scalability (or meshing strategy) of the FDTD methods should be taken as
\begin{align}
h = O(\varepsilon^{\beta/2}\sqrt{\delta_0}) =O(\varepsilon^{\beta/2}), \quad \tau = O(\varepsilon^{\beta/2}\sqrt{\delta_0}) =O(\varepsilon^{\beta/2}), \quad  0 < \varepsilon \leq 1.
\label{eq:WNE_scalability}
\end{align}
This implies that, in order to get ``correct'' numerical solution up to
the time at $O(\varepsilon^{-1})$, one has to take the meshing strategy: $h=O(\varepsilon^{1/2})$
and  $\tau=O(\varepsilon^{1/2})$;
and resp., in order to get ``correct'' numerical solution up to
the time at $O(\varepsilon^{-2})$, one has to take the meshing strategy: $h=O(\varepsilon)$
and  $\tau=O(\varepsilon)$. These results are very useful for
practical computations on how to select mesh size and time step such that
the numerical results are trustable!

\subsection{The proof of Theorem \ref{thm:CNFD_WNE}}
For the CNFD \eqref{eq:CNFD_WNE}, we establish the
error estimates in Theorem \ref{thm:CNFD_WNE}. The key of the proof is to deal with the nonlinearity and overcome the main difficulty in uniformly bounding the numerical solution $u^n$, i.e., $\|u^n\|_{l^\infty}\lesssim 1$. Here, we adapt the cut-off technique which has been widely used in the literature\cite{BC1, BC2, V}, i.e., the nonlinearity is truncated to a global Lipschitz function with compact support. 

Denote
$B=(1+M_0)^2$, choose a smooth function $\rho(\theta) \in C_0^{\infty}(\mathbb{R}^+)$ and define
\begin{equation}
F_B(\theta) = \rho \left( {\theta}/{B} \right) \theta,\quad \theta \in \mathbb{R^+}, \quad \rho(\theta)=
\begin{cases}
1, & 0 \leq \theta \leq 1,\\
\in [0, 1], & 1 \leq \theta \leq 2, \\
0, & \theta \geq 2,\\
\end{cases}
\end{equation}
then $F_B(\theta)$ has compact support and is smooth and global Lipschitz, i.e., there exists $C_B$  independent of $h$, $\tau$ and  $\varepsilon$, such that
\begin{equation}
|F_B(\theta_1) - F_B(\theta_2)| \leq C_B |\sqrt{\theta_1} - \sqrt{\theta_2}|, \quad  \forall \theta_1,\ \theta_2 \in \mathbb{R}^+.
\label{eq:Lip}
\end{equation}

Set $\hat{u}^0 = u^0$, $\hat{u}^1 = u^1$ and determine $\hat{u}^{n+1} \in X_M$ for $n\ge1$ as  follows
\begin{equation}
\delta^2_t \hat{u}^n_j - \delta^2_x [\![\hat{u}]\!]^n_j + [\![\hat{u}]\!]^n_j + \frac{\varepsilon^2}{2} \left( F_B((\hat{u}^{n+1}_j )^2) + F_B((\hat{u}^{n-1}_j )^2) \right)[\![\hat{u}]\!]^n_j=0,\quad j=0,1,\ldots,M-1.
\label{eq:WNE_cutoff}
\end{equation}
In fact, $\hat{u}^n_j$ can be viewed as another approximation of $u(x_j, t_n)$ for $j=0,1,\ldots,M$ and $n\ge0$. It is easy to verify that the scheme \eqref{eq:WNE_cutoff} is uniquely solvable for sufficiently small $\tau$ by using the properties of $\rho$ and standard techniques in Section 2. Define the corresponding `error' function $\hat{e}^n \in X_M$ as

\begin{equation}
\hat{e}^n_j = u(x_j, t_n) - \hat{u}^n_j,\quad j = 0, 1, \ldots, M, \quad n \geq 0,
\label{eq:haterrdef}
\end{equation}
and we can establish the following estimates:

\begin{theorem}
Under the assumption (A), there exist constants $h_0 > 0$ and $\tau_0 > 0$ sufficiently small and independent of $\varepsilon$, such that for any $0 < \varepsilon \leq 1$, when $0 < h \leq h_0\varepsilon^{\beta/2}$ and $0 < \tau \leq \tau_0\varepsilon^{\beta/2}$, we have the following error estimates
\begin{equation}\label{eq:WNE_cutoff-es}
\|\hat{e}^n\|_{l^2} +\|\delta^{+}_x \hat{e}^n\|_{l^2} \lesssim  {h^2}{\varepsilon^{-\beta}} + {\tau^2}{\varepsilon^{-\beta}},\quad  \|\hat{u}^n\|_{l^{\infty}} \leq 1 + M_0,\quad 0 \leq n \leq {T_0\varepsilon^{-\beta}}/{\tau}.
\end{equation}
\label{thm:WNE_cutoff}
\end{theorem}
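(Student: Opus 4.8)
The plan is to close a discrete energy estimate on the error equation for $\hat{e}^n$, exploiting the fact that the cut-off nonlinearity $F_B$ is globally Lipschitz so that the nonlinear terms are controlled \emph{without} any a priori bound on $\|\hat{u}^n\|_{l^\infty}$; this is exactly the point of replacing the true cubic nonlinearity by $F_B$. The desired $l^\infty$ bound on $\hat{u}^n$ is then recovered a posteriori from the $l^2$/$H^1$ error bound via the discrete Sobolev inequality, so no mathematical induction is needed for this conservative scheme.

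First I would insert the exact solution $u(x_j,t_n)$ into the cut-off scheme \eqref{eq:WNE_cutoff}. Since $\|u\|_{L^\infty}\le M_0$ gives $u^2\le M_0^2\le B$, the cut-off is inactive on the exact solution, i.e. $F_B(u^2)=u^2$, so $u$ satisfies \eqref{eq:WNE_cutoff} up to a local truncation error $\eta^n$. Taylor expansion together with assumption (A), under which all space-time derivatives up to order four are $O(1)$, yields $\|\eta^n\|_{l^2}\lesssim h^2+\tau^2$ with an $\varepsilon$-independent constant; the factor $\varepsilon^2$ in front of the nonlinearity only makes its contribution to $\eta^n$ smaller. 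Because \eqref{eq:w1} is a second-order Taylor step, $\hat{e}^0=0$ and $\|\hat{e}^1\|_{l^2}+\|\delta^{+}_x\hat{e}^1\|_{l^2}\lesssim \tau^3+\tau^2 h^2$, which is harmless.

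Subtracting \eqref{eq:WNE_cutoff} from this exact identity gives the error equation for $\hat{e}^n$, whose linear part is the linear CNFD operator and whose nonlinear part is $\varepsilon^2 R^n$. The decisive estimate is $\|R^n\|_{l^2}\lesssim \|\hat{e}^{n+1}\|_{l^2}+\|\hat{e}^{n-1}\|_{l^2}+\|[\![\hat{e}]\!]^n\|_{l^2}$, obtained by splitting $R^n$ into a factor $F_B(\hat{u}^2)$ (bounded since $F_B$ has compact support) multiplying $[\![\hat{e}]\!]^n$, plus the Lipschitz difference $F_B(\hat{u}^2)-F_B(u^2)$, which is bounded by $C_B|\hat{e}|$ through \eqref{eq:Lip}, multiplying the bounded quantity $[\![u]\!]^n$; the resulting bound is uniform in $\hat{u}$. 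I would then test the error equation with $\hat{e}^{n+1}-\hat{e}^{n-1}$, sum over $j$, and apply summation by parts under periodicity, so that the three linear terms telescope into $\mathcal{E}^n-\mathcal{E}^{n-1}$ for the discrete error energy
\[
\mathcal{E}^n=\|\delta^{+}_t\hat{e}^n\|_{l^2}^2+\frac{1}{2}\big(\|\delta^{+}_x\hat{e}^{n+1}\|_{l^2}^2+\|\delta^{+}_x\hat{e}^n\|_{l^2}^2\big)+\frac{1}{2}\big(\|\hat{e}^{n+1}\|_{l^2}^2+\|\hat{e}^n\|_{l^2}^2\big).
\]
Writing $\hat{e}^{n+1}-\hat{e}^{n-1}=\tau(\delta^{+}_t\hat{e}^n+\delta^{+}_t\hat{e}^{n-1})$ gives $\|\hat{e}^{n+1}-\hat{e}^{n-1}\|_{l^2}\le \tau(\sqrt{\mathcal{E}^n}+\sqrt{\mathcal{E}^{n-1}})$, so the nonlinear term contributes at most $C\varepsilon^2\tau(\mathcal{E}^n+\mathcal{E}^{n-1})$, with the crucial surviving factor $\varepsilon^2$, and the truncation term at most $\tau\|\eta^n\|_{l^2}(\sqrt{\mathcal{E}^n}+\sqrt{\mathcal{E}^{n-1}})$.

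Summing from $1$ to $m$, setting $S^m=\max_{0\le n\le m}\sqrt{\mathcal{E}^n}$, and using $m\le T_0\varepsilon^{-\beta}/\tau$ to bound $\tau\sum_{n\le m}\|\eta^n\|_{l^2}\lesssim \varepsilon^{-\beta}(h^2+\tau^2)$, a Young's inequality absorbs $\frac{1}{2}(S^m)^2$ and leaves $(S^m)^2\lesssim \mathcal{E}^0+\varepsilon^{-2\beta}(h^2+\tau^2)^2+\varepsilon^2\tau\sum_{n\le m}(S^n)^2$. The discrete Gronwall inequality then produces a factor $\exp(C\varepsilon^2\tau m)$, and here lies the heart of the long-time analysis: the hard part is the explicit $\varepsilon$-bookkeeping showing that the $\varepsilon^2$ from the weak nonlinearity exactly balances the $O(\varepsilon^{-\beta}/\tau)$ number of steps, so that $\varepsilon^2\tau m\le T_0\varepsilon^{2-\beta}\le T_0$ for $0\le\beta\le 2$ keeps the exponential $O(1)$ uniformly in $\varepsilon$; the matching $S^m$/Young accounting of the truncation error is what delivers the sharp rate $\varepsilon^{-\beta}(h^2+\tau^2)$ rather than $\varepsilon^{-\beta/2}(h^2+\tau^2)$. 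This gives $(S^m)^2\lesssim \varepsilon^{-2\beta}(h^2+\tau^2)^2$, hence $\|\hat{e}^n\|_{l^2}+\|\delta^{+}_x\hat{e}^n\|_{l^2}\lesssim h^2\varepsilon^{-\beta}+\tau^2\varepsilon^{-\beta}$. Finally the discrete Sobolev inequality turns this into $\|\hat{e}^n\|_{l^\infty}\lesssim \varepsilon^{-\beta}(h^2+\tau^2)\le C(h_0^2+\tau_0^2)$ under $h\le h_0\varepsilon^{\beta/2}$ and $\tau\le\tau_0\varepsilon^{\beta/2}$; choosing $h_0,\tau_0$ small makes the right-hand side at most $1$, whence $\|\hat{u}^n\|_{l^\infty}\le M_0+1$, completing the proof.
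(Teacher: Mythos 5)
Your proposal is correct and follows essentially the same route as the paper: truncation error estimate with the cut-off inactive on the exact solution, the global Lipschitz/compact-support properties of $F_B$ to bound the nonlinear residual by $\varepsilon^2(\|\hat{e}^{n+1}\|_{l^2}+\|\hat{e}^{n-1}\|_{l^2})$, an energy identity from testing with $\hat{e}^{n+1}-\hat{e}^{n-1}$, discrete Gronwall balancing $\varepsilon^2$ (resp.\ $\varepsilon^\beta$) against the $O(\varepsilon^{-\beta}/\tau)$ steps, and the a posteriori $l^\infty$ bound via the discrete Sobolev inequality. The only deviation is cosmetic bookkeeping: the paper applies Young's inequality at each step with weights $\varepsilon^{-\beta}$ and $\varepsilon^{\beta}$ and sums directly, whereas you use a running maximum of $\sqrt{\mathcal{E}^n}$ and absorb the truncation term globally --- both yield the same $\varepsilon^{-2\beta}(h^2+\tau^2)^2$ bound.
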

\vspace{-0.2in}
We begin with the local truncation error  $\hat{\xi}^n \in X_M$ of the scheme (\ref{eq:WNE_cutoff}) given as
\begin{equation}
\begin{split}
\hat{\xi}^0_j :=&\ \delta^{+}_t u(x_j, 0) - \gamma(x_j) - \frac{\tau}{2}\left[\delta^2_x\phi(x_j) -   \phi(x_j) - \varepsilon^{2} (\phi(x_j))^3\right],\quad j = 0, 1, \ldots, M-1,\\
\hat{\xi}^n_j := &\ \delta^{2}_t u(x_j, t_n) - \frac{1}{2}\left[ \delta^2_x u(x_j, t_{n+1})+ \delta^2_x u(x_j, t_{n-1})\right] + \frac{1}{2}\left[ u(x_j, t_{n+1}) + u(x_j, t_{n-1})\right] \\
& + \frac{\varepsilon^{2}}{4} \left( F_B(u(x_j, t_{n+1})^2) + F_B(u(x_j, t_{n-1})^2) \right)\left(u(x_j, t_{n+1})+ u(x_j, t_{n-1})\right),\quad n \geq 1.
\end{split}
\label{eq:WNE_lt}
\end{equation}
The following estimates hold for $\hat{\xi}^n$.

\begin{lemma}
Under the assumption (A), we have
\begin{equation}
\|\hat{\xi}^0\|_{l^2} + \| \delta^{+}_x\hat{\xi}^0\|_{l^2} \lesssim  h^2 + {\tau}^2,\quad \|\hat{\xi}^n\|_{l^2} \lesssim h^2 + {\tau}^2, \quad 1 \leq n \leq {T_0\varepsilon^{-\beta}}/{\tau}-1.
\label{eq:lterr}
\end{equation}
\label{lemma_lterr}
\end{lemma}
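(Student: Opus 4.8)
The plan is to regard $\hat\xi^n$ as the classical consistency error of the \emph{un-truncated} CNFD scheme applied to the exact solution $u$, and to obtain the bound by Taylor expansion about $(x_j,t_n)$, using the PDE \eqref{eq:21} to cancel the leading terms. The first and crucial observation is that the cut-off is transparent to the exact solution: by assumption (A) one has $\|u\|_{L^\infty}\le M_0$, so $u(x_j,t_n)^2\le M_0^2<(1+M_0)^2=B$, whence $\rho(u(x_j,t_n)^2/B)=1$ and therefore $F_B(u(x_j,t_n)^2)=u(x_j,t_n)^2$ for every $j$ and $n$. Consequently the nonlinear contribution in \eqref{eq:WNE_lt} reduces to $\frac{\varepsilon^2}{4}\bigl(u(x_j,t_{n+1})^2+u(x_j,t_{n-1})^2\bigr)\bigl(u(x_j,t_{n+1})+u(x_j,t_{n-1})\bigr)$, which is exactly $\varepsilon^2 G\bigl(u(x_j,t_{n+1}),u(x_j,t_{n-1})\bigr)$ for $F(v)=v^4/4$. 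Hence the truncation error of the cut-off scheme equals that of the CNFD scheme on the regime where the solution lives, and the cut-off plays no role in this lemma.

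For $1\le n\le T_0\varepsilon^{-\beta}/\tau-1$ I would expand each difference operator at $(x_j,t_n)$ and bound the remainders uniformly via (A): the central time difference gives $\delta^2_t u(x_j,t_n)=\partial_{tt}u(x_j,t_n)+O(\tau^2)$ (controlled by $\|\partial_{tttt}u\|_{L^\infty}$); the averaged central space difference gives $\tfrac12\bigl[\delta^2_x u(x_j,t_{n+1})+\delta^2_x u(x_j,t_{n-1})\bigr]=\partial_{xx}u(x_j,t_n)+O(h^2+\tau^2)$ (using $\|\partial_{xxxx}u\|_{L^\infty}$ for the $h^2$ part and a time average for the $\tau^2$ part); and the averaging term gives $\tfrac12\bigl[u(x_j,t_{n+1})+u(x_j,t_{n-1})\bigr]=u(x_j,t_n)+O(\tau^2)$. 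For the nonlinear term, writing $v=u(x_j,t_n)$ and expanding $u(x_j,t_{n\pm1})=v\pm\tau\partial_t u+\tfrac{\tau^2}{2}\partial_{tt}u+O(\tau^3)$, the odd-in-$\tau$ contributions cancel, so that $\tfrac{\varepsilon^2}{4}(u_{n+1}^2+u_{n-1}^2)(u_{n+1}+u_{n-1})=\varepsilon^2 v^3+O(\varepsilon^2\tau^2)$; this is the step one must check carefully to confirm the error is $O(\tau^2)$ rather than $O(\tau)$. Summing the four pieces and invoking $\partial_{tt}u-\partial_{xx}u+u+\varepsilon^2u^3=0$ at $(x_j,t_n)$, all leading terms cancel and $|\hat\xi^n_j|\lesssim h^2+\tau^2$ pointwise (using $\varepsilon\le1$), so that $\|\hat\xi^n\|_{l^2}^2=h\sum_j|\hat\xi^n_j|^2\lesssim (b-a)(h^2+\tau^2)^2$ gives the claimed bound.

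The first-step term $\hat\xi^0$ is treated separately and is the more delicate piece. I would expand $\delta^+_t u(x_j,0)=\partial_t u(x_j,0)+\tfrac{\tau}{2}\partial_{tt}u(x_j,0)+O(\tau^2)=\gamma(x_j)+\tfrac{\tau}{2}\partial_{tt}u(x_j,0)+O(\tau^2)$, then use the PDE at $t=0$, $\partial_{tt}u(x_j,0)=\partial_{xx}\phi(x_j)-\phi(x_j)-\varepsilon^2\phi(x_j)^3$, together with $\delta^2_x\phi(x_j)=\partial_{xx}\phi(x_j)+O(h^2)$, to cancel the $O(1)$ and $O(\tau)$ contributions in \eqref{eq:WNE_lt}; the residual is $O(\tau^2+\tau h^2)=O(\tau^2+h^2)$. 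The semi-$H^1$ estimate $\|\delta^+_x\hat\xi^0\|_{l^2}\lesssim h^2+\tau^2$ follows by applying $\delta^+_x$ and repeating the expansion with one extra spatial derivative, which is available since $u(\cdot,t)\in W^{4,\infty}_p$; here the periodicity of $u$ is essential so that $\delta^+_x$ is well defined across the boundary and introduces no extra terms.

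The main obstacle is thus not an accumulation-in-time difficulty but the bookkeeping of the first-step error together with its discrete spatial derivative, and the verification that the averaged cubic term contributes at order $\tau^2$. I would emphasize that assumption (A) supplies derivative bounds that are uniform in $\varepsilon\in(0,1]$, so the constants hidden in every $O(\cdot)$ above are independent of $\varepsilon$, $h$, $\tau$ and of the index $n$; in particular the length $T_0\varepsilon^{-\beta}$ of the time interval does not enter the truncation-error constant at all, and the $\varepsilon^{-\beta}$ growth appears only later through the energy/Gronwall argument.
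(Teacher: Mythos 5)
Your proposal is correct and follows essentially the same route as the paper: Taylor expansion about $(x_j,t_n)$ combined with the PDE to cancel the leading terms, with all remainders bounded uniformly in $\varepsilon$, $n$ via assumption (A), and the first-step error $\hat\xi^0$ and its discrete derivative handled by the same expansion at $t=0$. The only addition is your explicit (and correct) observation that $F_B(u(x_j,t_n)^2)=u(x_j,t_n)^2$ on the exact solution because $u^2\le M_0^2<B=(1+M_0)^2$, a point the paper leaves implicit.
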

\vspace{-0.3in}
\begin{proof}
Under the assumption (A), by applying the Taylor expansion to (\ref{eq:WNE_lt}), it leads to
\begin{equation*}
\begin{split}
|\hat{\xi}^0_j| \lesssim & \ \tau^2 \|\partial_{ttt} u \|_{L^{\infty}} +{h\tau}\|\phi'''\|_{L^{\infty}}\lesssim h^2+\tau^2, \quad j = 0, 1, \ldots, M-1,\\
|\hat{\xi}^n_j| \lesssim & \ {\tau^2} \left[\| \partial_{tttt} u \|_{L^{\infty}}  + \| \partial_{ttxx} u \|_{L^{\infty}} +(1+\varepsilon^2\|u\|_{L^\infty}^2)\| \partial_{tt} u \|_{L^{\infty}}+ \varepsilon^2\|u\|_{L^{\infty}}\|\partial_{t} u \|^2_{L^{\infty}}\right]\\
 &+ {h^2}\| \partial_{xxxx} u \|_{L^{\infty}} \lesssim \ h^2+\tau^2, \quad n\ge1.
\end{split}
\end{equation*}
Similarly, we have $|\delta^{+}_x\hat{\xi}^0_j| \lesssim h^2+\tau^2$ for $0\le j \le M-1$. These immediately imply (\ref{eq:lterr}).
\end{proof}

Next, we control the nonlinear term as follows.
\begin{lemma}
For $j = 0, 1, \ldots, M$ and $1\leq n \leq {T_0\varepsilon^{-\beta}}/{\tau}-1$, denote the error of the nonlinear term
\begin{equation}
\begin{split}
\hat{\eta}^n_j = &\frac{\varepsilon^2}{4} \left( F_B(u(x_j, t_{n+1})^2) + F_B(u(x_j, t_{n-1})^2) \right)\left(u(x_j, t_{n+1})+ u(x_j, t_{n-1})\right)\\
& - \frac{\varepsilon^2}{4} \left( F_B((\hat{u}^{n+1}_j)^2) + F_B((\hat{u}^{n-1}_j)^2) \right)\left(\hat{u}^{n+1}_j+ \hat{u}^{n-1}_j\right),
\end{split}
\label{eq:nonlinear}
\end{equation}
under the assumption (A), we have
\begin{equation}
\|\hat{\eta}^n\|_{l^2} \lesssim \varepsilon^{2}\left(\|\hat{e}^{n-1}\|_{l^2} +\|\hat{e}^{n+1}\|_{l^2}\right).
\end{equation}
\label{lemma:nonlinear}
\end{lemma}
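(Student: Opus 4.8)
The plan is to reduce everything to a pointwise estimate on the two--variable nonlinear response function. Introduce the shorthand $\Phi(a,b):=\bigl(F_B(a^2)+F_B(b^2)\bigr)(a+b)$ and write $u_\pm:=u(x_j,t_{n\pm1})$, so that by \eqref{eq:nonlinear} we have $\hat{\eta}^n_j=\frac{\varepsilon^2}{4}\bigl[\Phi(u_+,u_-)-\Phi(\hat{u}^{n+1}_j,\hat{u}^{n-1}_j)\bigr]$. It then suffices to prove the pointwise inequality
\[
\bigl|\Phi(u_+,u_-)-\Phi(\hat{u}^{n+1}_j,\hat{u}^{n-1}_j)\bigr|\lesssim |\hat{e}^{n+1}_j|+|\hat{e}^{n-1}_j|,
\]
with an implied constant depending only on $M_0$, $B$ and $C_B$ (hence independent of $h,\tau,\varepsilon$); squaring, multiplying by $h$, summing over $j$ and multiplying by $\varepsilon^2/4$ immediately yields the asserted $l^2$ bound $\|\hat{\eta}^n\|_{l^2}\lesssim \varepsilon^2(\|\hat{e}^{n-1}\|_{l^2}+\|\hat{e}^{n+1}\|_{l^2})$.

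The delicate point, and the reason one cannot simply quote a Lipschitz property of $\Phi$, is that $\Phi$ is \emph{not} globally Lipschitz: its gradient contains terms such as $2aF_B'(a^2)\,b$, which are unbounded as $|b|\to\infty$. Moreover at this stage of the argument the numerical values $\hat{u}^{n+1}_j,\hat{u}^{n-1}_j$ are not yet known to be bounded — establishing such a bound is precisely what the cut--off and the induction are for. The way around this is to exploit two independent facts at once. First, by assumption (A) the \emph{exact} arguments satisfy $|u_\pm|\le M_0$. Second, $F_B$ is itself uniformly bounded, $\|F_B\|_{L^\infty}\le 2B$, because $\rho$ is supported in $[0,2]$; this boundedness holds regardless of how large its argument is, which is exactly the effect of the compact support.

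I would then expand $\Phi(a,b)=F_B(a^2)a+F_B(b^2)b+F_B(a^2)b+F_B(b^2)a$ and treat the four monomials separately. The two diagonal terms are controlled by observing that $\psi(v):=F_B(v^2)v$ is smooth with compact support (it vanishes for $|v|\ge\sqrt{2B}$), hence globally Lipschitz, giving $|\psi(u_+)-\psi(\hat{u}^{n+1}_j)|\lesssim|\hat{e}^{n+1}_j|$ and likewise for the $(n-1)$ term. For each cross term the crucial choice is a telescoping that keeps the Lipschitz increment of $F_B$ attached to the \emph{bounded exact} value and the remaining linear increment attached to a \emph{bounded} $F_B$ factor, namely
\[
F_B(u_+^2)u_--F_B((\hat{u}^{n+1}_j)^2)\hat{u}^{n-1}_j=\bigl[F_B(u_+^2)-F_B((\hat{u}^{n+1}_j)^2)\bigr]u_-+F_B((\hat{u}^{n+1}_j)^2)\,\hat{e}^{n-1}_j,
\]
where the first summand is $\lesssim C_B|\hat{e}^{n+1}_j|\,|u_-|\le C_BM_0|\hat{e}^{n+1}_j|$ by \eqref{eq:Lip} together with $\bigl||u_+|-|\hat{u}^{n+1}_j|\bigr|\le|\hat{e}^{n+1}_j|$, and the second is $\lesssim 2B\,|\hat{e}^{n-1}_j|$ by boundedness of $F_B$; the symmetric cross term is handled identically with the roles of $n+1$ and $n-1$ exchanged.

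The main obstacle is exactly this treatment of the cross terms. A naive decomposition that pairs the $F_B$--increment against the possibly large numerical value $\hat{u}^{n\mp1}_j$ produces a factor $|\hat{e}^{n\pm1}_j|\,|\hat{u}^{n\mp1}_j|$ that cannot be absorbed into the right--hand side without an a priori $l^\infty$ bound on the numerical solution. The correct telescoping above avoids this entirely: it uses the Lipschitz continuity of $F_B$ only against the bounded exact solution, and uses merely the \emph{boundedness} (not the Lipschitz continuity) of $F_B$ against the error increment. This is what lets the estimate hold uniformly in $\hat{u}^n$, and it is the precise mechanism by which the cut--off makes the nonlinear error controllable before any bound on $\|\hat{u}^n\|_{l^\infty}$ is available.
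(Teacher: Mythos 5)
Your proposal is correct and follows essentially the same route as the paper: the paper's ``direct calculation'' is precisely your telescoping, yielding the pointwise bound $|\hat{\eta}^n_j|\le C\varepsilon^2\bigl[M_0+|F_B((\hat{u}^{n+1}_j)^2)|+|F_B((\hat{u}^{n-1}_j)^2)|\bigr]\bigl(|\hat{e}^{n+1}_j|+|\hat{e}^{n-1}_j|\bigr)$, where the $M_0$ term comes from pairing the Lipschitz increment of $F_B$ with the bounded exact solution and the $|F_B|$ terms are controlled by the compact support of the cut-off. Your write-up merely makes explicit the mechanism the paper leaves implicit.
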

\vspace{-0.3in}
\begin{proof}
Noticing (\ref{eq:Lip}) and (\ref{eq:nonlinear}),  direct calculation for $j = 0, 1, \ldots, M$ and $1\leq n \leq {T_0\varepsilon^{-\beta}}/{\tau}-1$ leads to
\begin{equation}
|\hat{\eta}^n_j| 
\leq \  C \varepsilon^{2} \left[M_0 + |F_B((\hat{u}^{n+1}_j)^2)| + |F_B((\hat{u}^{n-1}_j)^2)| \right]\left( |\hat{e}^{n+1}_j| +  |\hat{e}^{n-1}_j|\right),
\end{equation}
where the constant $C$ is independent of $h, \tau$ and $\varepsilon$.
Under the assumption (A) and the properties of $F_B$, we have
\begin{equation}
\|\hat{\eta}^n\|_{l^2} \lesssim \varepsilon^{2} \left[ \|\hat{e}^{n+1}\|_{l^2} + \|\hat{e}^{n-1}\|_{l^2}\right], \quad 1\leq n \leq {T_0\varepsilon^{-\beta}}/{\tau}-1,
\end{equation}
which completes the proof.
\end{proof}

Now, we proceed to study the growth of the errors and verify Theorem \ref{thm:WNE_cutoff}.  Subtracting (\ref{eq:WNE_cutoff}) from (\ref{eq:WNE_lt}), the error $\hat{e}^n\in X_M$ satisfies
\begin{equation}
\begin{split}
&\delta^2_t \hat{e}^n_j - \frac{1}{2}\left(\delta^2_x \hat{e}^{n+1}_j  + \delta^2_x \hat{e}^{n-1}_j  \right) + \frac{1}{2}\left(\hat{e}^{n+1}_j  + \hat{e}^{n-1}_j  \right) = \hat{\xi}^n_j - \hat{\eta}^n_j,\quad 1 \leq n \leq {T_0\varepsilon^{-\beta}}/{\tau}-1,\\
&\hat{e}^0_j =0, \quad \hat{e}^1_{j} = \tau \hat{\xi}^0_j, \quad j = 0, 1, \ldots, M-1.\\
\end{split}
\label{eq:WNE_errfun}
\end{equation}
Define the `energy' for the error vector $\hat{e}^n$ as
\begin{equation}
\hat{S}^n = \|\delta^{+}_t \hat{e}^n\|^2_{l^2} + \frac{1}{2}\left( \|\delta^{+}_x \hat{e}^n\|^2_{l^2} +  \|\delta^{+}_x \hat{e}^{n+1}\|^2_{l^2}\right) + \frac{1}{2}\left(\|\hat{e}^n\|^2_{l^2} + \|\hat{e}^{n+1}\|^2_{l^2}\right),\quad n\geq 0.
\label{eq:S_def}
\end{equation}
It is easy to see that
\begin{equation}
\hat{S}^0 = \|\hat{\xi}^0\|^2_{l^2} + \frac{\tau^2}{2}\|\delta^{+}_x \hat{\xi}^0\|^2_{l^2} + \frac{ \tau^2}{2}\|\hat{\xi}^0\|^2_{l^2} \lesssim \left(h^2 + \tau^2 \right)^2.
\label{eq:S0}
\end{equation}
\begin{proof}
({\bf{Proof of Theorem {\ref{thm:WNE_cutoff}}}}) When $n=0$, the estimates in \eqref{eq:WNE_cutoff-es} are obvious and the $n=1$ case is already verified in Lemma 3.1 for sufficiently small $0<\tau<\tau_1$ and $0<h<h_1$. Thus, we only need to prove \eqref{eq:WNE_cutoff-es} for $2\leq n \leq {T_0\varepsilon^{-\beta}}/{\tau}$.

Multiplying both sides of (\ref{eq:WNE_errfun}) by $h\left(\hat{e}^{n+1}_j - \hat{e}^{n-1}_j\right)$, summing up for $j$, noticing the fact $0 \leq \beta \leq 2$ and making use of the Young's inequality and Lemmas \ref{lemma_lterr} \&\ref{lemma:nonlinear}, we derive
\begin{equation}
\begin{split}
\hat{S}^n - \hat{S}^{n-1}& = h \sum^{M-1}_{j=0} \left(\hat{\xi}^n_j - \hat{\eta}^n_j\right)\left(\hat{e}^{n+1}_j - \hat{e}^{n-1}_j\right)\\
& \leq \tau{\varepsilon^{-\beta}}\left(\|\hat{\xi}^n\|^2_{l^2} + \|\hat{\eta}^n\|^2_{l^2}\right) +\tau\varepsilon^{\beta} \left(\|\delta^{+}_t \hat{e}^n\|^2_{l^2} + \|\delta^{+}_t \hat{e}^{n-1}\|^2_{l^2}\right)\\
& \lesssim  \varepsilon^{\beta} \tau \left(\hat{S}^n + \hat{S}^{n-1}\right) + {\tau}{\varepsilon^{-\beta}} \left( h^2 + \tau^2 \right)^2,\quad 1\leq n \leq {T_0\varepsilon^{-\beta}}/\tau-1.
\end{split}
\end{equation}
Summing the above inequalities for time steps from 1 to $n$, there exists a constant $C>0$ such that
\begin{equation}
\hat{S}^n \le \hat{S}^0 +C\varepsilon^{\beta}\tau \sum^{n}_{m=0} \hat{S}^m +C {T_0}{\varepsilon^{-2\beta}}\left(h^2 + \tau^2 \right)^2,\quad 1 \leq n \leq {T_0\varepsilon^{-\beta}}/{\tau}-1.
\end{equation}
Hence, the discrete Gronwall's inequality suggests that there exists a constant $\tau_2 > 0$ sufficiently small, such that when $0 < \tau \leq \tau_2$, the following holds
\begin{equation}
\hat{S}^n   \le \left(\hat{S}^0 + C{T_0}{\varepsilon^{-2\beta}}\left(h^2 + \tau^2 \right)^2 \right)e^{2C(n+1)\varepsilon^\beta\tau} \lesssim  {\varepsilon^{-2\beta}}\left(h^2 + \tau^2 \right)^2,\quad 1 \leq n \leq {T_0\varepsilon^{-\beta}}/{\tau}-1.
\label{eq:Sn}
\end{equation}
Recalling $ \|\hat{e}^{n+1}\|_{l^2}^2 +\|\delta^{+}_x \hat{e}^{n+1}\|_{l^2}^2 \leq 2 \hat{S}^n$ when $0 < \varepsilon \leq 1$, we can obtain the error estimate
\begin{equation}
 \|\hat{e}^{n+1}\|_{l^2} +\|\delta^{+}_x \hat{e}^{n+1}\|_{l^2} \lesssim {h^2}{\varepsilon^{-\beta}} + {\tau^2}{\varepsilon^{-\beta}}, \quad 1 \leq n \leq {T_0\varepsilon^{-\beta}}/{\tau}-1.
\end{equation}
Finally, we estimate $\|\hat{u}^{n+1}\|_{l^\infty}$ for $1 \leq n \leq {T_0\varepsilon^{-\beta}}/{\tau}-1$. The discrete Sobolev inequality implies
\begin{equation}
\|\hat{e}^n\|_{l^{\infty}} \leq \|\hat{e}^n\|_{l^2} + \|\delta^+_x\hat{e}^n\|_{l^2} \lesssim {h^2}{\varepsilon^{-\beta}} + {\tau^2}{\varepsilon^{-\beta}}.
\end{equation}
Thus, there exist $h_2 > 0$ and $\tau_3 > 0$ sufficiently small, when $0 < h \leq h_2 \varepsilon^{\beta/2} $ and  $0 < \tau \leq \tau_3 \varepsilon^{\beta/2} $, we obtain
\begin{equation}
\|\hat{u}^{n}\|_{l^{\infty}} \leq \|u(x, t_n)\|_{L^{\infty}} + \|\hat{e}^{n}\|_{l^{\infty}}  \leq  M_0+1.
\end{equation}
The proof is completed by choosing $h_0=\min\{h_1,h_2\}$ and $\tau_0 = \min\{\tau_1, \tau_2, \tau_3\}$.
\end{proof}
\begin{proof}
({\bf{Proof of Theorem {\ref{thm:CNFD_WNE}}}}) In view of the definition of $\rho$, Theorem \ref{thm:WNE_cutoff} implies that (\ref{eq:WNE_cutoff})  collapses to (\ref{eq:CNFD_WNE}). By the unique solvability of the CNFD, $\hat{u}^n$ is identical to $u^n$. Thus, Theorem \ref{thm:CNFD_WNE} is a direct consequence of Theorem \ref{thm:WNE_cutoff}.
\end{proof}

\subsection{The proof of Theorem \ref{thm:LFFD_WNE}.}
For the LFFD (\ref{eq:LFFD_WNE}), we establish the error estimates in Theorem \ref{thm:LFFD_WNE}. Throughout this section, the stability condition \eqref{eq:con4} is assumed. Here, we sketch the proof and omit those parts similar to the proof of Theorem \ref{thm:CNFD_WNE} in Section 3.2.

\begin{proof}
Denote the local truncation error as $\tilde{\xi}^n\in X_M$
\begin{equation}\label{eq:tildexi}
\begin{split}
\tilde{\xi}^0_j :=&\  \delta^{+}_t u(x_j,0) - \gamma(x_j) - \frac{\tau}{2}\left[\delta^2_x\phi(x_j) -   \phi(x_j) - \varepsilon^{2} \phi^3(x_j)\right],\quad j = 0, 1, \ldots, M-1,\\
\tilde{\xi}^n_j := &\delta^2_t u(x_j, t_n) -\delta^2_x u(x_j, t_n) +   u(x_j, t_n) +  \varepsilon^{2} u^3(x_j, t_n),\quad 1 \leq n \leq {T_0\varepsilon^{-\beta}}/{\tau}-1,
\end{split}
\end{equation}
and the error of the nonlinear term as $\tilde{\eta}^n \in X_M$
\begin{equation}
\tilde{\eta}^n_j := \varepsilon^{2} \left( u^3(x_j, t_n) - (u^n_j)^3\right),\quad j = 0, 1, \ldots, M-1,\quad1 \leq n \leq {T_0\varepsilon^{-\beta}}/{\tau}-1.
\label{eq:tildeeta}
\end{equation}
Similar to Lemma \ref{lemma_lterr}, under the assumption (A), we have
\begin{equation}\label{LFFD_tr}
\|\tilde{\xi}^0\|_{l^2} + \| \delta^{+}_x\tilde{\xi}^0\|_{l^2} \lesssim h^2 + \tau^2,\quad \|\tilde{\xi}^n\|_{l^2} \lesssim h^2 + \tau^2,\quad 1 \leq n \leq {T_0\varepsilon^{-\beta}}/{\tau}-1.
\end{equation}
The error equation for the LFFD \eqref{eq:LFFD_WNE} can be derived as
\begin{equation}
\begin{split}
&\delta^2_t e^n_j - \delta^2_x e^{n}_j  +   e^{n}_j   = \tilde\xi{^n_j} - \tilde{\eta}^n_j,\quad 1 \leq n \leq {T_0\varepsilon^{-\beta}}/{\tau}-1,\\
& e^0_j = 0,\quad e^1_j = \tau \tilde{\xi}^0_j,\quad j = 0, 1, \ldots, M-1.
\end{split}
\label{eq:errfun_expt}
\end{equation}

We adapt the mathematical induction to prove Theorem \ref{thm:LFFD_WNE}, i.e. we want to demonstrate that there exist $h_0>0$ and $\tau_0>0$, such that, when $0<h<h_0$ and $0<\tau<\tau_0$, under the stability condition \eqref{eq:con4}, the error bounds hold
\begin{align}\label{LFFD_err}
\|e^n\|_{l^2} +\|\delta^{+}_x e^n\|_{l^2} \leq C_1\left({h^2}{\varepsilon^{-\beta}} + {\tau^2}{\varepsilon^{-\beta}}\right),\quad \|u^n\|_{l^{\infty}} \leq 1 + M_0,
\end{align}
for all $0 \leq n \leq {T_0\varepsilon^{-\beta}}/{\tau}$ and $0 \le \beta \leq 2$, where $C_1$, $\tau_0$ and $h_0$ will be classified later. For $n=0$, \eqref{LFFD_err} is trivial. For $n=1$, the error equation \eqref{eq:errfun_expt} and the estimate \eqref{LFFD_tr} imply
\begin{align}
\|e^1\|_{l^2}=\tau\|\tilde{\xi}^0\|_{l^2}\leq C_2\tau(h^2+\tau^2), \quad
\|\delta^{+}_xe^1\|_{l^2}=\tau\|\delta^{+}_x\tilde{\xi}^0\|_{l^2}\leq C_2\tau(h^2+\tau^2).
\end{align}
In view of the triangle inequality, discrete Sobolev inequality and the assumption (A), there exist $h_1 > 0$ and $\tau_1 > 0$ sufficiently small, when $0 < h \leq h_1  $ and  $0 < \tau \leq \tau_1 $, we have
\begin{equation}
\|{u}^{1}\|_{l^{\infty}} \leq \|u(x, t_1)\|_{L^{\infty}} + \|{e}^{1}\|_{l^{\infty}} \leq \|u(x, t_1)\|_{L^{\infty}}+ \|e^1\|_{l^2} + \|\delta^+_x e^1\|_{l^2}\leq  M_0+1.
\end{equation}
In other words, \eqref{LFFD_err} hold for $n=1$.

Now we assume that \eqref{LFFD_err} is valid for all $0 \leq n \leq m-1\leq {T_0\varepsilon^{-\beta}}/{\tau} - 1$, then we need to show shat it is still valid when $n = m$.
From (\ref{eq:tildeeta}), the error of the nonlinear term can be controlled as
\begin{equation}
\|\tilde{\eta}^n\|_{l^2} \leq C_3\varepsilon^{2}\|e^n\|_{l^2}, \quad 1 \leq n \leq m-1.
\end{equation}
Define the `energy' for the error vector $e^n (n = 0, 1, \ldots)$ as
\begin{equation*}
S^n : = \left(1 - \frac{\tau^2}{2} - \frac{\tau^2}{h^2}\right) \|\delta^{+}_t e^n\|^2_{l^2} + \frac{1}{2}\sum\limits_{k=n}^{n+1}\|e^{k}\|^2_{l^2} +\frac{1}{2h} \sum^{M-1}_{j=0} \left[ \left(e^{n+1}_{j+1} - e^n_j\right)^2 + \left(e^{n}_{j+1} - e^{n+1}_j\right)^2\right],
\label{eq:expt_Sn}
\end{equation*}
where
\[S^0  = \left(1 - \frac{\tau^2}{2} - \frac{\tau^2}{h^2}\right) \|\delta^{+}_t e^0\|^2_{l^2} + \left(\frac{1}{2}+\frac{1}{h^2}\right)\|e^{1}\|^2_{l^2}=\|\tilde{\xi}^0\|^2_{l^2}\leq C_4(\tau^2+h^2)^2.
\]
Under the assumption $\tau\le \frac{1}{2}\min\{1, h\}$, we have $1-\tau^2/2-\tau^2/h^2\geq \frac{1}{4}>0$. Since
\[
\|\delta^{+}_xe^{n+1}\|_{l^2}^2=\frac1h\sum\limits_{j=0}^{M-1}(e^{n+1}_{j+1}-e^n_j-\tau\delta^{+}_te_j^n)^2\leq
\frac2h\sum\limits_{j=0}^{M-1}(e^{n+1}_{j+1}-e^n_j)^2+\frac{2\tau^2}{h^2}\|\delta^{+}_te^n\|_{l^2}^2,
\]
we can conclude that
\begin{align}
	S^n\geq \frac{1}{4}\|\delta^{+}_xe^{n+1}\|_{l^2}^2+\frac{1}{2}\left(\|e^{n}\|^2_{l^2}+\|e^{n+1}\|^2_{l^2}\right),\quad 1 \leq n \leq m-1.
\end{align}
Similar to the proof in Section 3.2, there exists $\tau_2 > 0$ sufficiently small, when $0 < \tau \leq \tau_2$,
\begin{equation}
S^n \leq C_5\left({h^2}{\varepsilon^{-\beta}} + {\tau^2}{\varepsilon^{-\beta}}\right)^2,\quad 1 \leq n \leq m-1,
\end{equation}
where $C_5$ depends on $T_0$ and the exact solution $u(x,t)$. Letting $n=m$, we have
\begin{equation}
 \|{e}^{m}\|_{l^2} +\|\delta^{+}_x {e}^{m}\|_{l^2} \leq C_6({h^2}{\varepsilon^{-\beta}} + {\tau^2}{\varepsilon^{-\beta}}), \quad 1 \leq m \leq {T_0\varepsilon^{-\beta}}/{\tau}
\end{equation}
where $C_6$ depends on $T_0$ and  the exact solution $u(x,t)$.

It remains to estimate $\|{u}^{m}\|_{l^\infty}$ for $n=m$.
In fact, the discrete Sobolev inequality implies
\begin{equation}
\|{e}^m\|_{l^{\infty}} \leq \|{e}^m\|_{l^2} + \|\delta^+_x{e}^m\|_{l^2} \lesssim {h^2}{\varepsilon^{-\beta}} + {\tau^2}{\varepsilon^{-\beta}}.
\end{equation}
Thus, there exist $h_2 > 0$ and $\tau_3 > 0$ sufficiently small, when $0 < h \leq h_2 \varepsilon^{\beta/2} $ and  $0 < \tau \leq \tau_3 \varepsilon^{\beta/2} $, we obtain
\begin{equation}
\|{u}^{m}\|_{l^{\infty}} \leq \|u(x, t_m)\|_{L^{\infty}} + \|{e}^{m}\|_{l^{\infty}}  \leq  M_0+1, \quad 1 \leq m \leq {T_0\varepsilon^{-\beta}}/{\tau}.
\end{equation}
Under the stability condition \eqref{eq:con4} and the choices of $h_0=\min\{h_1,h_2\}$, $\tau_0 = \min\{\tau_1, \tau_2, \tau_3\}$ and $C_1=\max\{C_2,C_6\}$, the estimates in \eqref{LFFD_err} are valid when $n=m$. Hence, the mathematical induction process is done and  the proof of Theorem \ref{thm:LFFD_WNE} is completed.
\end{proof}


\section{Numerical results}
In this section,  we present numerical results of the FDTD methods
for the NKGE (\ref{eq:21}) up to the long time at
$O(\varepsilon^{-\beta})$ with $0\le \beta \leq 2$
to verify our error bounds.
We only show numerical results for the CNFD (\ref{eq:CNFD_WNE}) and the results for other FDTD methods are quite similar which are omitted for brevity. In the numerical experiments, we take $a=0$, $b=2\pi$ and choose the initial data as
\begin{align}\label{itTd1}
\phi(x) =\cos(x) + \cos(2x),\quad \quad\gamma(x) = \sin(x),	\qquad 0\le x\le 2\pi.
\end{align}
The `exact' solution is obtained numerically by the exponential-wave integrator Fourier pseudospectral method  \cite{BD,DXZ} with a very fine mesh size and a very small time step, e.g. $h_e=\pi/2^{15}$ and $\tau_e = 10^{-5}$. Denote $u^n_{h,\tau}$ as the numerical solution at time $t=t_n$ obtained by a numerical method with mesh size $h$ and time step $\tau$. In order to quantify the numerical results, we define the error function as follows:
\begin{equation}
e_{h,\tau}(t_n) = \sqrt{\|u(\cdot, t_n) - u^n_{h,\tau}\|^2_{l^2}+ \|\delta^{+}_x (u(\cdot, t_n) - u^n_{h,\tau})\|^2_{l^2}}.
\end{equation}

Here we study the following three cases with respect to different $0\le \beta\le 2$:

\smallskip
Case I. Fixed time dynamics up to the time at $O(1)$, i.e., $\beta = 0$;

Case II. Intermediate long time dynamics up to the time at $O(\varepsilon^{-1})$, i.e., $\beta = 1$;

Case III. Long time dynamics up to the time at $O(\varepsilon^{-2})$, i.e., $\beta = 2$.
\smallskip

We first test the spatial discretization errors at $t_\varepsilon = 1/\varepsilon^{\beta}$ for different $0<\varepsilon \le 1$. In order to do this, we fix the time step as $\tau_e = 10^{-5}$ such that the temporal error can be ignored, and solve the NKGE  (\ref{eq:21}) under different mesh size $h$. Tables \ref{tab:beta0_h}, \ref{tab:beta1_h} and \ref{tab:beta2_h} depict the spatial errors for $\beta=0$, $\beta=1$ and $\beta=2$, respectively. Then we check the temporal errors at $t_\varepsilon = 1/\varepsilon^{\beta}$ for
different $0<\varepsilon \le 1$ with different time step $\tau$ and a fine mesh size $h_e = \pi/2^{11}$ such that the spatial errors can be neglected. Tables \ref{tab:beta0_t}, \ref{tab:beta1_t} and  \ref{tab:beta2_t} show the temporal errors for $\beta=0$, $\beta=1$ and $\beta=2$, respectively.


\begin{table}[ht!]
\caption{Spatial errors of the CNFD (\ref{eq:CNFD_WNE}) for the NKGE (\ref{eq:21}) with $a=0$, $b=2\pi$, $\beta=0$ and \eqref{itTd1}}
\centering
\begin{tabular}{cccccccc}
\hline
$e_{h,\tau_e}(t=1)$ & $h_0 = \pi/16$ & $h_0/2$ &$h_0/2^2$ & $h_0/2^3$ & $h_0/2^4$  & $h_0/2^5$ \\
\hline
$\varepsilon_0 = 1 $  & 3.77E-2 & 9.65E-3 & 2.43E-3 & 6.09E-4 & 1.52E-4 & 3.84E-5\\
order &-  & 1.97 & 1.99 & 2.00 & 2.00 & 1.98 \\
\hline
$\varepsilon_0 / 2$ & 3.33E-2 & 8.35E-3 & 2.09E-3 & 5.22E-4 & 1.31E-4 & 3.34E-5\\
order &-  & 2.00 & 2.00 & 2.00 & 1.99 & 1.97\\
\hline
$\varepsilon_0 / 2^2$   & 3.48E-2 & 8.74E-3 & 2.19E-3 & 5.47E-4 & 1.37E-4 & 3.50E-5 \\
order &-  & 1.99 & 2.00 & 2.00 & 2.00 &1.97 \\
\hline
$\varepsilon_0 / 2^3$ & 3.55E-2 & 8.92E-3 & 2.23E-3 & 5.58E-4 & 1.40E-4 & 3.57E-5\\
order &- & 1.99 & 2.00 & 2.00 & 1.99 & 1.97 \\
\hline
$\varepsilon_0 / 2^4$  & 3.57E-2 & 8.97E-3 & 2.24E-3 & 5.61E-4 & 1.40E-4 & 3.59E-5\\
order &-  & 1.99 & 2.00 & 2.00 & 2.00 & 1.96 \\
\hline
\end{tabular}
\label{tab:beta0_h}
\end{table}

\begin{table}[ht!]
\caption{Temporal errors of the CNFD (\ref{eq:CNFD_WNE}) for the NKGE (\ref{eq:21}) with $a=0$, $b=2\pi$, $\beta=0$ and \eqref{itTd1} }
\centering
\begin{tabular}{cccccccc}
\hline
$e_{h_e,\tau}(t=1)$ &$\tau_0 = 0.05 $ & $\tau_0/2 $ &$\tau_0/2^2 $ & $\tau_0/2^3 $ & $\tau_0/2^4$ & $\tau_0/2^5$ \\
\hline
$\varepsilon_0 = 1$  & 3.27E-2  & 8.57E-3 & 2.19E-3 &  5.53E-4 & 1.39E-4 & 3.48E-5 \\
order & -  & 1.93 & 1.97 & 1.99 & 1.99 & 2.00 \\
\hline
$\varepsilon_0 /2$ & 2.10E-2  & 5.45E-3 & 1.39E-3 &  3.49E-4 & 8.76E-5 & 2.20E-5 \\
order & - & 1.96 & 1.97 & 1.99 & 1.99 & 1.99 \\
\hline
$\varepsilon_0 /2^2$  & 1.84E-2  & 4.75E-3 & 1.21E-3 &  3.04E-4 & 7.63E-5 & 1.91E-5 \\
order & - & 1.95 & 1.97 & 1.99 & 1.99 & 2.00 \\
\hline
$\varepsilon_0 /2^3$  & 1.78E-2  & 4.59E-3 & 1.17E-3 &  2.94E-4 & 7.37E-5 & 1.85E-5 \\
order & -  & 1.96 & 1.97 & 1.99 & 2.00 & 1.99 \\
\hline
$\varepsilon_0 /2^4$  & 1.77E-2  & 4.56E-3 & 1.16E-3 &  2.91E-4 & 7.31E-5 & 1.83E-5 \\
order & - & 1.96 & 1.97 & 2.00 & 1.99 & 2.00  \\
\hline
\end{tabular}
\label{tab:beta0_t}
\end{table}

\begin{table}[ht!]
\caption{Spatial errors of the CNFD (\ref{eq:CNFD_WNE}) for the NKGE (\ref{eq:21}) with $a=0$, $b=2\pi$, $\beta=1$ and \eqref{itTd1}}
\centering
\begin{tabular}{ccccccc}
\hline
$e_{h,\tau_e}(t=1/\varepsilon)$ &$h_0 = \pi/16$ & $h_0/2$ &$h_0/2^2$ & $h_0/2^3$ & $h_0/2^4$  & $h_0/2^5$  \\
\hline
$\varepsilon_0 = 1 $   & \bf{3.77E-2} & 9.65E-3 & 2.43E-3 & 6.09E-4 & 1.52E-4 & 3.84E-5  \\
order  & \bf{-}  & 1.97 & 1.99 & 2.00 & 2.00 & 1.98 \\
\hline
$\varepsilon_0 / 4$   & 7.31E-2 & \bf{1.77E-2} & 4.38E-3 & 1.09E-3 & 2.74E-4 & 7.02E-5  \\
order & -   & \bf{2.05} & 2.01 & 2.01 &1.99 & 1.96 \\
\hline
$\varepsilon_0 / 4^2$  &  6.60E-1 & 1.71E-1 & \bf{4.31E-2} & 1.08E-2 & 2.70E-3 & 6.91E-4 \\
order & -  & 1.95 & \bf{1.99} & 2.00 & 2.00 &1.97 \\
\hline
$\varepsilon_0 / 4^3$  & 2.78E+0 & 7.25E-1 & 1.80E-1 & \bf{4.50E-2} & 1.13E-2 & 2.88E-3 \\
order & -  & 1.94 & 2.01 & \bf{2.00} & 1.99 & 1.97  \\
\hline
$\varepsilon_0 / 4^4$  & 5.67E+0 & 8.48E-1 & 3.96E-1 & 1.10E-1  & \bf{2.81E-2} & 7.22E-3  \\
order & -  & 2.74 & 1.10 & 1.85 & \bf{1.97} & 1.96  \\
\hline
\end{tabular}
\label{tab:beta1_h}
\end{table}

\begin{table}[ht!]
\caption{Temporal errors of the CNFD (\ref{eq:CNFD_WNE}) for the NKGE (\ref{eq:21}) with $a=0$, $b=2\pi$, $\beta=1$ and \eqref{itTd1}}
\centering
\begin{tabular}{cccccccc}
\hline
$e_{h_e,\tau}(t=1/\varepsilon)$ &$\tau_0 = 0.05 $ & $\tau_0/2 $ &$\tau_0/2^2 $ & $\tau_0/2^3 $ & $\tau_0/2^4$ & $\tau_0/2^5$ \\
\hline
$\varepsilon_0 = 1$ & \bf{3.27E-2} & 8.57E-3 & 2.19E-3 &  5.53E-4 & 1.39E-4 & 3.48E-5 \\
order & \bf{-} & 1.93 & 1.97 & 1.99 & 1.99 & 2.00 \\
\hline
$\varepsilon_0 /4$ & 4.01E-2 & \bf{9.95E-3} & 2.49E-3 &  6.22E-4 & 1.56E-4 & 3.89E-5 \\
order & - & \bf{2.01} & 2.00 & 2.00 & 2.00 & 2.00 \\
\hline
$\varepsilon_0 /4^2$ &  3.45E-1  & 8.79E-2 & \bf{2.21E-2} &  5.53E-3 & 1.38E-3 & 3.46E-4 \\
order & - & 1.97 & \bf{1.99} & 2.00 & 2.00 & 2.00  \\
\hline
$\varepsilon_0 /4^3$ &1.47E+0  & 3.69E-1 & 9.19E-2 & \bf{2.29E-2} & 5.74E-3 & 1.43E-3 \\
order & -  & 1.99 & 2.01 & \bf{2.00} & 2.00 & 2.01\\
\hline
$\varepsilon_0 /4^4$ &  8.58E-1  & 7.05E-1 & 2.20E-1 & 5.75E-2 & \bf{1.45E-2} & 3.64E-3 \\
order & - & 0.28 & 1.68 & 1.94 & \bf{1.99} & 1.99 \\
\hline
\end{tabular}
\label{tab:beta1_t}
\end{table}

\begin{table}[ht!]
\caption{Spatial errors of the CNFD (\ref{eq:CNFD_WNE}) for the NKGE (\ref{eq:21}) with $a=0$, $b=2\pi$, $\beta=2$ and \eqref{itTd1}}
\centering
\begin{tabular}{ccccccc}
\hline
$e_{h,\tau_e}(t=1/\varepsilon^2)$ &$h_0 = \pi/16$ & $h_0/2$ &$h_0/2^2$ & $h_0/2^3$ & $h_0/2^4$  & $h_0/2^5$  \\
\hline
$\varepsilon_0 = 1 $  & \bf{3.77E-2} & 9.65E-3 & 2.43E-3 & 6.09E-4 & 1.52E-4 & 3.84E-5  \\
order &  \bf{-} & 1.97 & 1.99 & 2.00 & 2.00 & 1.98  \\
\hline
$\varepsilon_0 / 2$ & 3.98E-2 & \bf{9.56E-3} & 2.39E-3 & 5.97E-4 & 1.49E-4 & 3.81E-5  \\
order & - & \bf{2.06} & 2.00 & 2.00 & 2.00 & 1.97  \\
\hline
$\varepsilon_0 / 2^2$ & 7.17E-1 & 1.82E-1 & \bf{4.55E-2} & 1.14E-2& 2.85E-3 & 7.27E-4  \\
order & -  & 1.98 & \bf{2.00} & 2.00 & 2.00 &1.97 \\
\hline
$\varepsilon_0 / 2^3$ & 2.78E+0 & 6.54E-1 & 1.58E-1 & \bf{3.92E-2} & 9.78E-3 & 2.50E-3  \\
order & -  & 2.09 & 2.05 & \bf{2.01} & 2.00 & 1.97  \\
\hline
$\varepsilon_0 / 2^4$ & 3.31E+0 & 1.78E+0 & 5.92E-1 & 1.55E-1 & \bf{3.93E-2} & 1.01E-2  \\
order & - & 0.89 & 1.59 & 1.93 & \bf{1.98} & 1.96 \\
\hline
\end{tabular}
\label{tab:beta2_h}
\end{table}

\begin{table}[ht!]
\caption{Temporal errors of the CNFD (\ref{eq:CNFD_WNE}) for the NKGE (\ref{eq:21}) with $a=0$, $b=2\pi$, $\beta=2$ and \eqref{itTd1}}
\centering
\begin{tabular}{cccccccc}
\hline
$e_{h_e,\tau}(t=1/\varepsilon^2)$ &$\tau_0 = 0.05 $ & $\tau_0/2 $ &$\tau_0/2^2 $ & $\tau_0/2^3 $ & $\tau_0/2^4$ & $\tau_0/2^5$ \\
\hline
$\varepsilon_0 = 1$ & \bf{3.27E-2} & 8.57E-3 & 2.19E-3 &  5.53E-4 & 1.39E-4 & 3.48E-5 \\
order & \bf{-} & 1.93 & 1.97 & 1.99 & 1.99 & 2.00 \\
\hline
$\varepsilon_0 /2$ & 2.56E-2 & \bf{6.32E-3} & 1.58E-3 &  3.94E-4 & 9.86E-5 & 2.47E-5 \\
order & - & \bf{2.02} & 2.00 & 2.00 & 2.00 & 2.00 \\
\hline
$\varepsilon_0 /2^2$ & 3.91E-1  & 9.83E-2 & \bf{2.46E-2} &  6.16E-3 & 1.54E-3 & 3.85E-4 \\
order & - & 1.99 & \bf{2.00} & 2.00 & 2.00 & 2.00 \\
\hline
$\varepsilon_0 /2^3$ & 1.40E+0  & 3.32E-1 & 8.14E-2 & \bf{2.03E-2} & 5.06E-3 & 1.26E-3\\
order & - & 2.08 & 2.03 & \bf{2.00} & 2.00 & 2.01\\
\hline
$\varepsilon_0 /2^4$ &1.81E+0  & 1.13E+0 & 3.16E-1 & 8.07E-2 & \bf{2.03E-2} & 5.07E-3 \\
order & - & 0.68 & 1.84 & 1.97 &\bf{1.99} & 2.00 \\
\hline
\end{tabular}
\label{tab:beta2_t}
\end{table}

From Tables \ref{tab:beta0_h}-\ref{tab:beta2_t} for the CNFD and additional similar numerical results for other FDTD methods not shown here for brevity, we can draw the following observations:

(i) For any fixed $\varepsilon=\varepsilon_0>0$ or $\beta=0$, the FDTD methods are uniformly second-order accurate in both spatial and temporal discretizations (cf. Tables \ref{tab:beta0_h} \& \ref{tab:beta0_t} and the first rows in Tables \ref{tab:beta1_h}-\ref{tab:beta2_t}), which agree
with those results in the literature. (ii) In the intermediate long time regime, i.e. $\beta=1$, the second order convergence in space and time
of the FDTD methods can be observed only when $0<h \lesssim \varepsilon^{1/2}$ and $0< \tau \lesssim \varepsilon^{1/2}$
(cf. upper triangles above the diagonals (corresponding to $h\sim \varepsilon^{1/2}$ and $\tau\sim \varepsilon^{1/2}$, and being labelled in bold letters)  in   Tables \ref{tab:beta1_h}-\ref{tab:beta1_t}), which confirm our error bounds.
(iii) In the long time regime, i.e. $\beta=2$, the second order convergence in space and time
of the FDTD methods can be observed only when $0<h \lesssim \varepsilon$ and $0<\tau \lesssim \varepsilon$
(cf. upper triangles above the diagonals (corresponding to $h\sim \varepsilon$ and $\tau\sim \varepsilon$,  and being labelled in bold letters) in   Tables \ref{tab:beta2_h}-\ref{tab:beta2_t}), which again confirm our error bounds.
In summary, our numerical results confirm our rigorous error bounds and
show that they are sharp.

\section{Extension to an oscillatory NKGE}

Introducing a rescaling  in time by
$s = {\varepsilon}^{\beta} t$ with $0\le \beta \le 2$ and denoting $v({\bf x},s) :=u({\bf x},s/\varepsilon^\beta)= u({\bf x},t)$, we can reformulate
the NKGE \eqref{eq:WNE}  into the following oscillatory NKGE
\begin{equation}
\begin{split}
&\varepsilon^{2\beta}\partial_{ss} v({\bf x,} s) - \Delta v({\bf x}, s) + v({\bf x}, s) + \varepsilon^{2} v^3({\bf x}, s) = 0,\quad {\bf{x}} \in \mathbb{T}^d,\quad s > 0,\\
&v({\bf x}, 0) = \phi({\bf x}), \quad \partial_s v({\bf x}, 0) = {\varepsilon^{-\beta}} \gamma({\bf x}), \quad {\bf{x}} \in \mathbb{T}^d.
\label{eq:50}
\end{split}
\end{equation}
Again, the oscillatory NKGE (\ref{eq:50}) is time symmetric or time reversible and conserves the {\sl energy} \cite{BD, DXZ}, i.e.,
\begin{equation}
\begin{split}
\mathcal{E}(s) := & \int_{\mathbb{T}^d} \left[ \varepsilon^{2\beta}|\partial_s v ({\bf{x}}, s)|^2 + |\nabla v({\bf{x}}, s)|^2 + |v({\bf{x}}, s)|^2 +\frac{\varepsilon^{2}}{2} |v({\bf{x}}, s)|^4  \right] d {\bf{x}} \\
 \equiv & \int_{\mathbb{T}^d} \left[ |\gamma({\bf{x}})|^2 + |\nabla \phi({\bf{x}})|^2 + |\phi({\bf{x}})|^2 +\frac{\varepsilon^2}{2} |\phi({\bf{x}})|^4  \right] d {\bf{x}}  =E(0)=O(1), \quad s\ge0.
\end{split}
\label{eq:Energy_v}
\end{equation}

\begin{figure}[ht!]
\begin{minipage}{0.5\textwidth}
\centerline{\includegraphics[width=7.5cm,height=5cm]{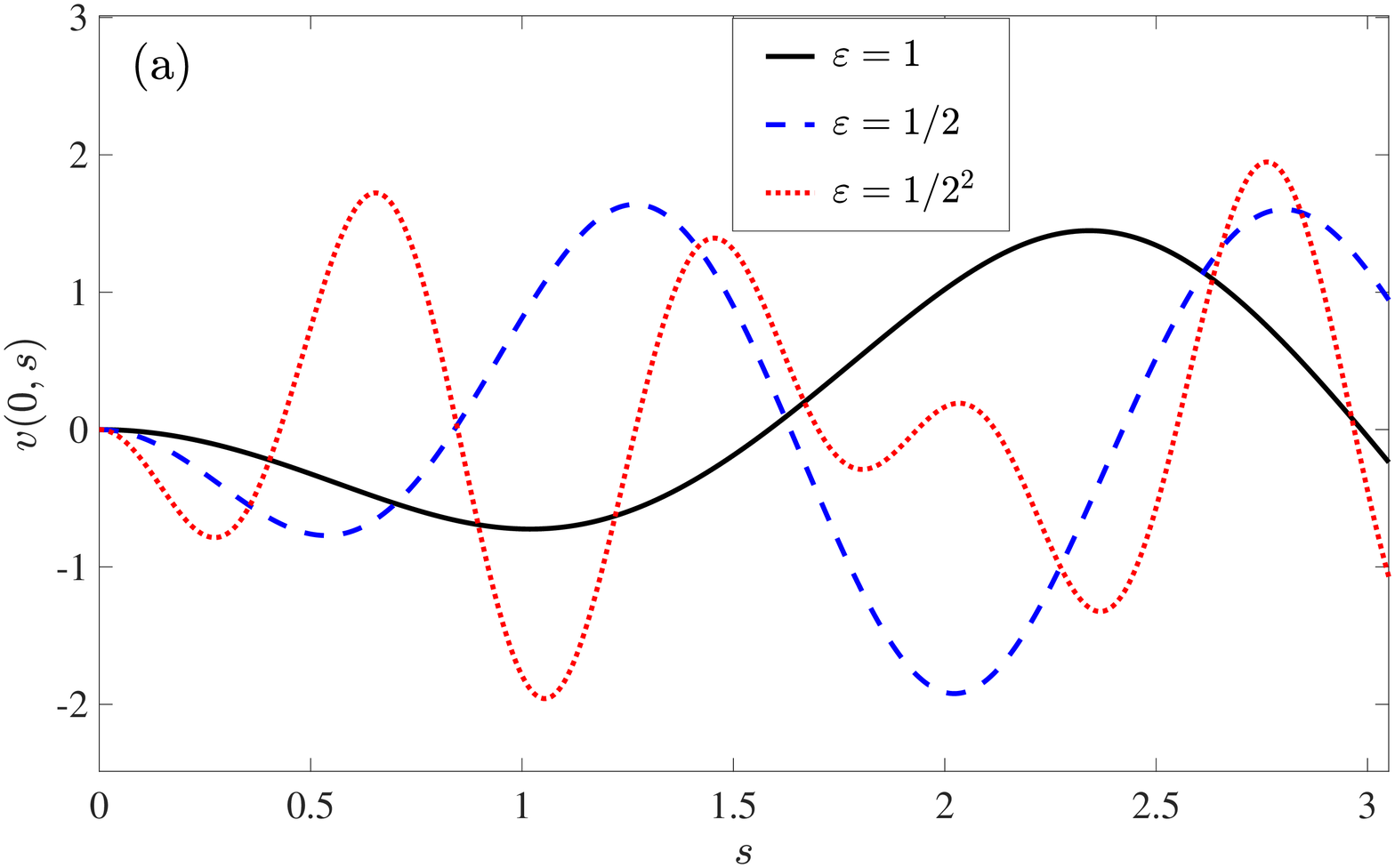}}
\end{minipage}
\begin{minipage}{0.5\textwidth}
\centerline{\includegraphics[width=7.5cm,height=5cm]{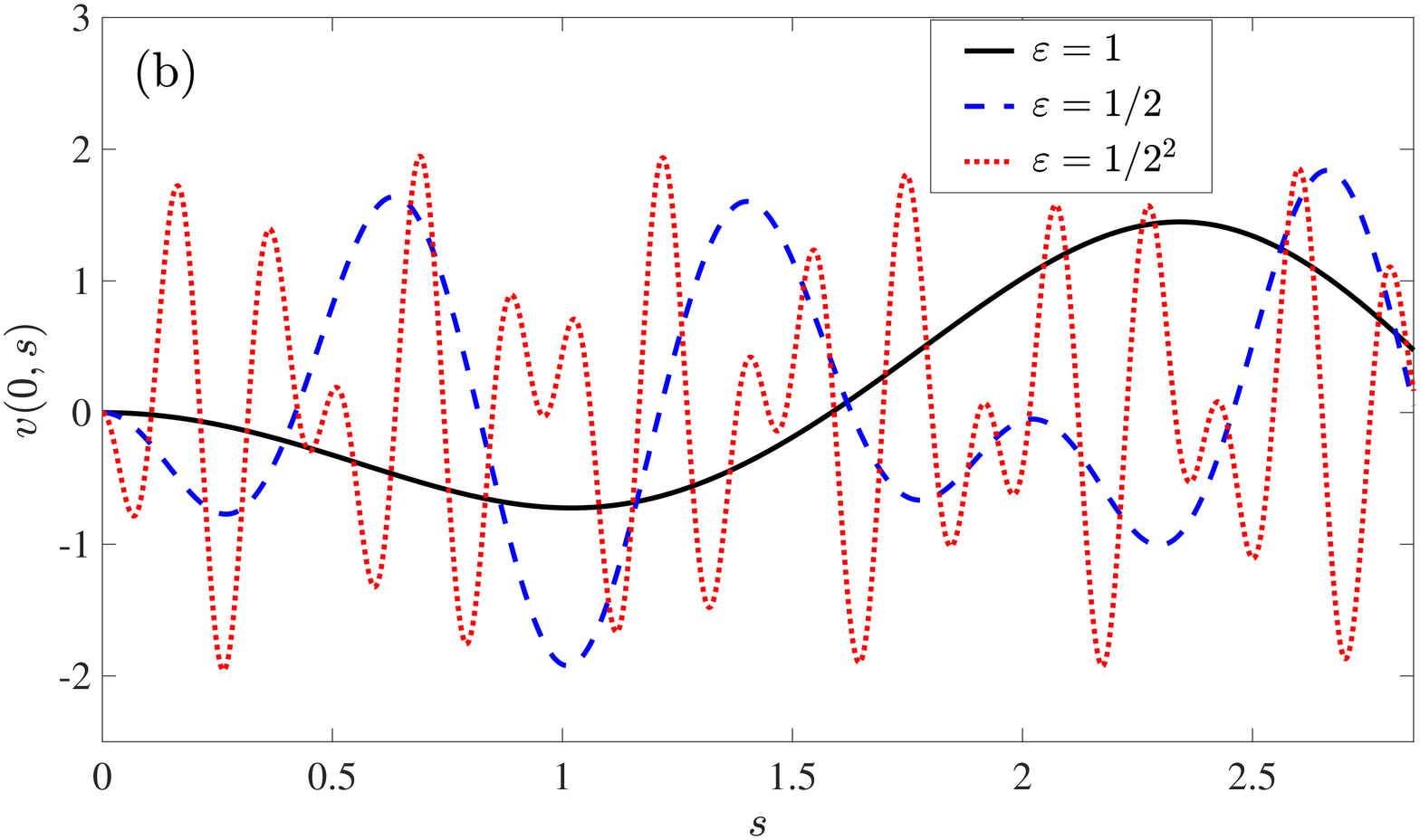}}
\end{minipage}
\vspace{-0.1in}
\caption{The solution $v(0, s)$ of the oscillatory NKGE (\ref{eq:50}) with
$d=1$ and initial data \eqref{itTd1}   for different $\varepsilon$ and $\beta$: (a) $\beta = 1$, (b) $\beta = 2$.}
\label{fig:vt}
\end{figure}
\begin{figure}[ht!]
\begin{minipage}{0.5\textwidth}
\centerline{\includegraphics[width=7.5cm,height=5cm]{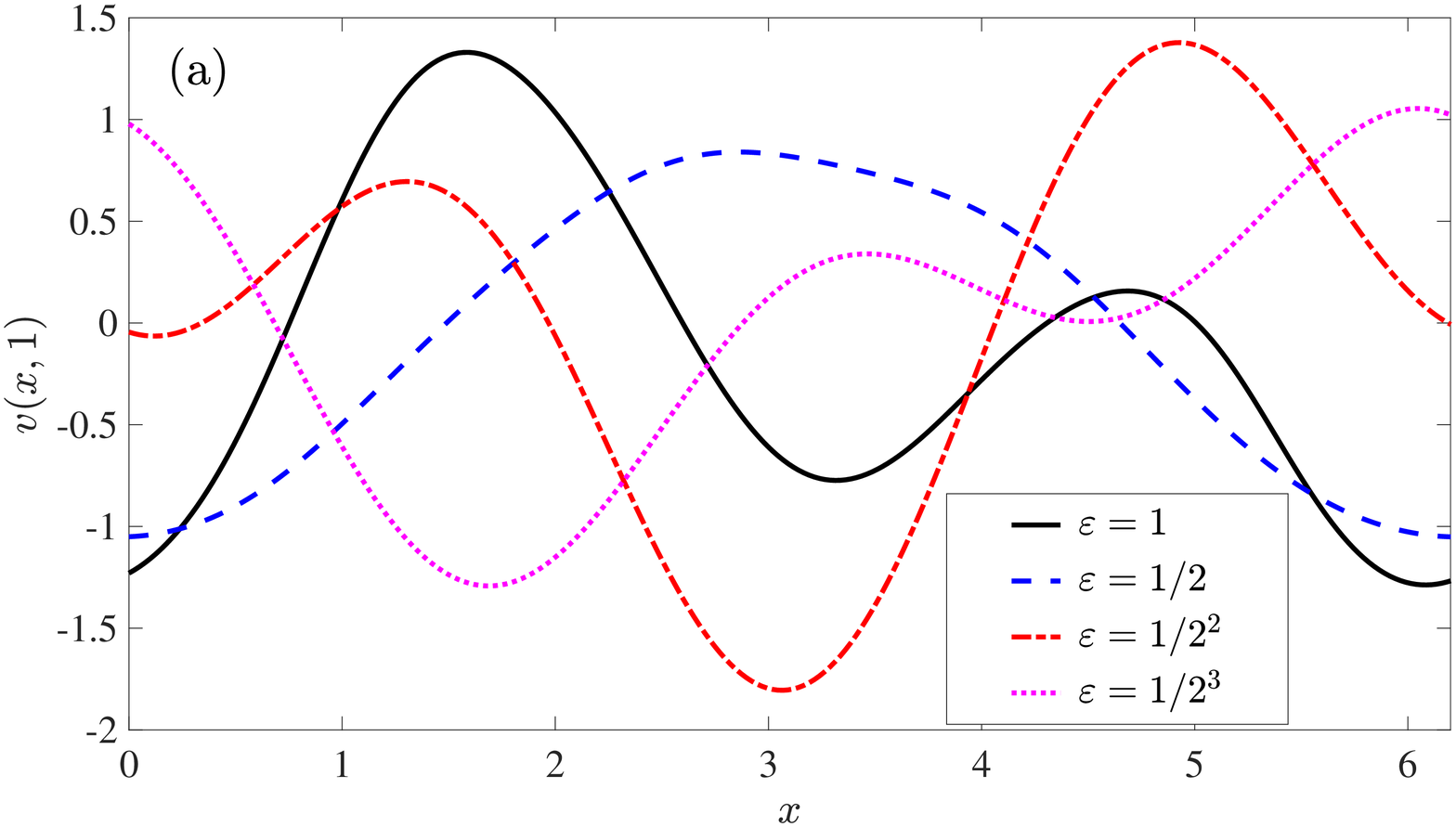}}
\end{minipage}
\begin{minipage}{0.5\textwidth}
\centerline{\includegraphics[width=7.5cm,height=5cm]{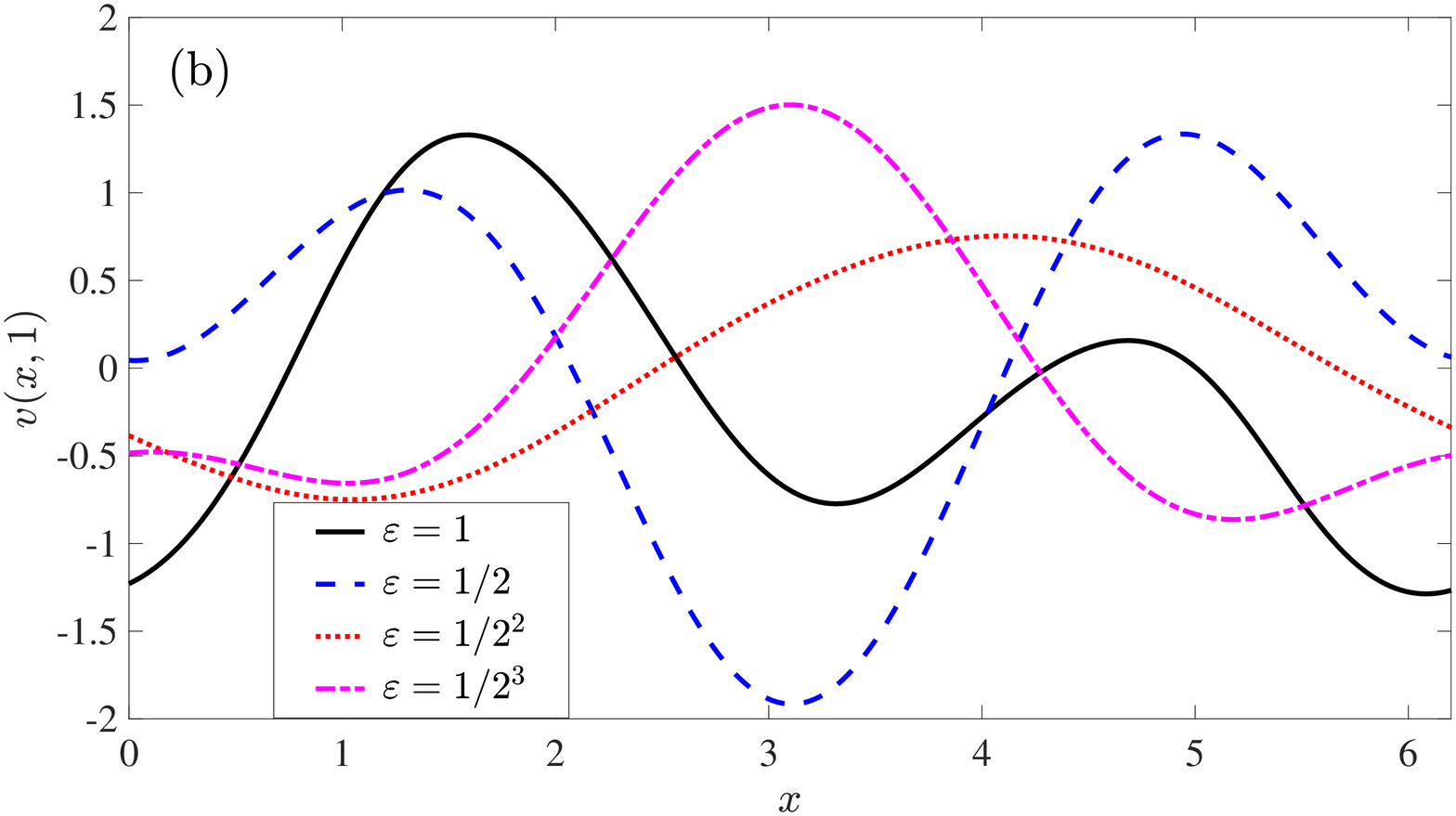}}
\end{minipage}
\vspace{-0.1in}
\caption{The solution $v(x, 1)$ of the oscillatory NKGE (\ref{eq:50}) with  $d=1$ and initial data \eqref{itTd1} for different $\varepsilon$ and $\beta$: (a) $\beta = 1$, (b) $\beta = 2$.}
\label{fig:vx}
\end{figure}

\noindent
In fact, the long time dynamics of the NKGE \eqref{eq:WNE}
up to the time at $t=O(\varepsilon^{-\beta})$ is equivalent
to the dynamics of the oscillatory NKGE \eqref{eq:50}
up to the fixed time at $s=O(1)$. Of course, the solution of
of the NKGE \eqref{eq:WNE} propagates waves with wavelength at $O(1)$ in both space and time, and wave speed in space at $O(1)$ too.
On the contrary, the solution of
 the oscillatory NKGE \eqref{eq:50} propagates waves with wavelength at $O(1)$ in space and $O(\varepsilon^\beta)$ in time, and wave speed in space at $O(\varepsilon^{-\beta})$. To illustrate this, Figures \ref{fig:vt} \& \ref{fig:vx} show the solutions $v(0, s)$ and $v(x,1)$, respectively,  of
the oscillatory NKGE \eqref{eq:50} with $d=1$, $\mathbb{T}=(0,2\pi)$ and
initial data \eqref{itTd1} for different $0<\varepsilon\le 1$ and $\beta$.  We remark here that the oscillatory nature of the
oscillatory NKGE \eqref{eq:50} is quite different with that of
the NKGE in the nonrelativistic limit regime. In fact, in the nonrelativistic limit regime of the NKGE \cite{BCJY,BCZ,BD,BDZ}, the solution  propagates waves with wavelength at $O(1)$ in space and $O(\varepsilon^2)$ in time, and wave speed in space at $O(1)$!

In the following, we extend the FDTD methods and their error bounds
for the NKGE \eqref{eq:WNE} in previous sections to the oscillatory
NKGE (\ref{eq:50}). Again, for simplicity of notations,  the FDTD methods  and their error bounds are only presented in 1D, and the results can be easily generalized to high dimensions with minor modifications.
In addition, the proofs for the error bounds are quite similar to those in Sections 2\&3, and thus they are omitted for brevity. We adopt similar notations as those used in Sections 2\&3 except stated otherwise. In 1D, consider the following oscillatory NKGE
\begin{equation}
\begin{split}
&\varepsilon^{2\beta}\partial_{ss} v(x, s) - \partial_{xx}v(x, s) + v(x, s) + \varepsilon^{2} v^3(x, s) = 0,\quad x \in \Omega = (a, b),\quad s > 0,\\
&v(x, 0) = \phi(x), \quad \partial_s v(x, 0) = {\varepsilon^{-\beta}} \gamma(x) , \quad x \in \overline{\Omega} = [a, b],
\label{eq:51}
\end{split}
\end{equation}
with periodic boundary conditions.

\subsection{FDTD methods}

Choose the  temporal step size $k := \Delta s>0$ and denote time steps as
$s_n := nk$ for $n\ge0$.  Let $v^n_j$ be the numerical approximation of $v(x_j, s_n)$ for $j = 0, 1, \ldots, M$ and $n \geq 0$, and denote the numerical solution at time $s = s_n$ as $v^n$. Introduce the temporal finite difference operators as
\begin{equation*}
\delta^{+}_s v^n_j = \frac{v^{n+1}_j-v^n_j}{k},\quad \delta^{-}_s v^n_j = \frac{v^{n}_j-v^{n-1}_j}{k},\quad \delta^{2}_s v^n_j = \frac{v^{n+1}_j-2v^{n}_j+v^{n-1}_j}{{k}^2}.
\end{equation*}

We consider the following four FDTD methods:

I. The Crank-Nicolson finite difference (CNFD) method
\begin{equation}
\varepsilon^{2\beta}\delta^{2}_s v^n_j-\frac{1}{2}\delta^{2}_x\left(v^{n+1}_j+v^{n-1}_j\right)+
\frac{1}{2} \left(v^{n+1}_j+v^{n-1}_j \right) + \varepsilon^{2} G\left(v^{n+1}_j, v^{n-1}_j \right)= 0,\ \ 0\le j\le M-1;
\label{eq:CNFD_HOE}
\end{equation}

II. A semi-implicit energy conservative finite difference (SIFD1) method
\begin{equation}
{\varepsilon^{2\beta}}\delta^{2}_s v^n_j-\delta^{2}_x v^{n}_j +\frac{1}{2} \left(v^{n+1}_j+v^{n-1}_j \right) + \varepsilon^{2} G\left(v^{n+1}_j, v^{n-1}_j \right)= 0,\quad 0\le j\le M-1;
\label{eq:SIFD1_HOE}
\end{equation}

III. Another semi-implicit finite difference (SIFD2) method
\begin{equation}
\varepsilon^{2\beta}\delta^{2}_s v^n_j-\frac{1}{2}\delta^{2}_x\left(v^{n+1}_j+v^{n-1}_j\right)+
\frac{1}{2} \left(v^{n+1}_j+v^{n-1}_j \right) + \varepsilon^{2}(v^{n}_j)^3 = 0,\quad 0\le j\le M-1;
\label{eq:SIFD2_HOE}
\end{equation}

IV. The Leap-frog finite difference (LFFD) method
\begin{equation}
{\varepsilon^{2\beta}}\delta^{2}_s v^n_j-\delta^{2}_x v^{n}_j + v^{n}_j + \varepsilon^{2}(v^{n}_j)^3 = 0,\quad 0\le j\le M-1, \qquad n\ge1.
\label{eq:LFFD_HOE}
\end{equation}
The initial and boundary conditions are discretized as
\begin{equation}
v^{n+1}_0 = v^{n+1}_M,\quad  v^{n+1}_{-1} = v^{n+1}_{M-1}, \quad n \geq 0;\quad v^0_j = \phi(x_j),\quad j = 0, 1, \ldots, M.
\label{eq:vib}
\end{equation}
Using the Taylor expansion and noticing (\ref{eq:51}), the first step
$v^1\in X_M$ can be computed as
\begin{equation}
v^1_j = \phi(x_j) + k\varepsilon^{-\beta}\gamma(x_j) + \frac{1}{2}k^2\varepsilon^{-2\beta}  \left[\delta^2_x\phi(x_j)- \phi(x_j) - \varepsilon^{2}\phi^3(x_j)\right],\ \  0\le j \le M-1.
\label{eq:sinc1}
\end{equation}

 In fact, if we take $k=\tau\varepsilon^{\beta}$ in the FDTD methods in this section, then they are consistent with those FDTD methods presented in Section 2. Thus they have the same solutions.

We remark here that, in practical computations, in order to uniformly bound the first step value $v^1\in X_M$ for $\varepsilon \in (0, 1]$, in the above approximation \eqref{eq:sinc1}, $k{\varepsilon^{-\beta}}$ and  $k^2{\varepsilon^{-2\beta}}$ are replaced by $\sin({k}{\varepsilon^{-\beta}})$ and $k\sin({k}{\varepsilon^{-2\beta}})$, respectively \cite{BD,BS}.

\subsection{Stability and  energy conservation}
Denote
\begin{equation}
\tilde \sigma_{\rm max}:=\max_{0 \leq n \leq {T_0/{k}}}\| v^n\|_{l^\infty}^2. \label{tsmax}
\end{equation}
Similar to Section 2, following the von Neumann linear stability analysis of the classical FDTD methods for the NKGE in the nonrelativistic limit regime \cite{BD, LE}, we can conclude the linear stability of the above FDTD methods for oscillatory NKGE \eqref{eq:51}
up to the fixed time $s=T_0$ in the following lemma.

\begin{lemma} For the above FDTD methods applied to the oscillatory
NKGE \eqref{eq:51} up to the fixed time $s=T_0$,  we have:

(i) The CNFD (\ref{eq:CNFD_HOE}) is unconditionally stable for any $h > 0, k > 0$ and $0 < \varepsilon \leq 1$.

(ii) When $h\ge2$, the SIFD1 (\ref{eq:SIFD1_HOE}) is unconditionally stable for any $h > 0$ and $k > 0$; and when $0<h<2$, this scheme is conditionally stable under the stability condition
\begin{equation}
0 < k < {2\varepsilon^{\beta}h}/{\sqrt{4-h^2}},\quad h > 0, \quad 0 < \varepsilon \leq 1.
\label{eq:con2_HOE}
\end{equation}

(iii) When $\tilde \sigma_{\rm max} \leq  \varepsilon^{-2}$, the SIFD2 (\ref{eq:SIFD2_HOE}) is unconditionally stable for any $h > 0$ and $k > 0$; and when $\tilde \sigma_{\rm max} >  \varepsilon^{-2}$, this scheme is conditionally stable under the stability condition
\begin{equation}
0 < k < {2\varepsilon^{\beta}}/{\sqrt{\varepsilon^{2}\tilde\sigma_{\rm max} - 1}},\quad h > 0, \quad 0 < \varepsilon \leq 1.
\label{eq:con3_HOE}
\end{equation}

(iv) The LFFD (\ref{eq:LFFD_HOE}) is conditionally stable under the stability condition
\begin{equation}
0 < k < {2\varepsilon^{\beta}h}/{\sqrt{4+h^2(1+\varepsilon^{2}\tilde \sigma_{\rm max})}},\quad h > 0, \quad 0 < \varepsilon \leq 1.
\label{eq:con4_HOE}
\end{equation}
\end{lemma}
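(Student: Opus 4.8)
The plan is to carry out a von Neumann (Fourier) linear stability analysis in complete parallel with Lemma 2.1, with the nonlinearity frozen. First I would linearize each scheme by replacing the cubic term $\varepsilon^2 v^3$ (or $\varepsilon^2 G$) by a constant-coefficient term $\varepsilon^2\sigma\,v$, where $\sigma$ is a frozen value of $|v|^2$ bounded by $\tilde\sigma_{\rm max}$; for the conservative schemes the frozen term is attached to the implicit average $\frac12(v^{n+1}+v^{n-1})$, whereas for SIFD2 and LFFD it sits at the explicit level $n$. Then I would insert the plane-wave ansatz $v_j^n=\xi^n e^{\mathrm{i}\mu_l x_j}$ with $\mu_l=2\pi l/(b-a)$, so that $-\delta_x^2$ has symbol $p_l:=\frac{4}{h^2}\sin^2(\mu_l h/2)\in[0,4/h^2]$ while $\delta_s^2$ contributes $(\xi-1)^2/k^2$.

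For each of the four schemes this produces, after clearing denominators by $k^2/\varepsilon^{2\beta}$, a quadratic amplification equation in $\xi$. Because every scheme is time-symmetric, the resulting quadratic is palindromic, $a\xi^2+b\xi+a=0$, so its roots satisfy $\xi_1\xi_2=1$; hence stability $|\xi_{1,2}|\le 1$ holds if and only if both roots lie on the unit circle, which for a real palindromic quadratic with $a>0$ is equivalent to a nonpositive discriminant, i.e. $|b|\le 2a$. I would then read off $a,b$ in each case: for CNFD one finds $a=1+\frac{k^2(p_l+1+\varepsilon^2\sigma)}{2\varepsilon^{2\beta}}\ge 1$ with $b=-2$, so $|b|\le 2a$ holds for every mode and every $\sigma$, giving unconditional stability (i). For SIFD1, SIFD2 and LFFD the explicit terms shift part of the symbol onto the middle coefficient $b$, and the inequality $|b|\le 2a$ becomes a genuine CFL-type restriction.

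The remaining step is to optimize the CFL inequality over the Fourier index $l$ and over $\sigma\in[0,\tilde\sigma_{\rm max}]$ to obtain the worst-case (sufficient) conditions. Here one must be careful which extreme of $p_l$ is binding: for SIFD1 and LFFD the most restrictive mode is $p_l=4/h^2$ (so the factors $4-h^2$ and $4+h^2(1+\varepsilon^2\tilde\sigma_{\rm max})$ appear), while for SIFD2 it is $p_l=0$ (leaving $\varepsilon^2\tilde\sigma_{\rm max}-1$); simultaneously, the implicit placement of the frozen nonlinearity in SIFD1 makes larger $\sigma$ only more stable, so its worst case is $\sigma=0$ and the condition is $\sigma$-independent, whereas in SIFD2 and LFFD the explicit nonlinearity forces $\sigma=\tilde\sigma_{\rm max}$. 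This yields precisely \eqref{eq:con2_HOE}--\eqref{eq:con4_HOE}. As a cross-check I would invoke the scaling correspondence noted in the text: setting $k=\tau\varepsilon^{\beta}$ maps each scheme here onto the corresponding scheme of Section 2, and substituting $\tau=k/\varepsilon^{\beta}$ into the conditions of Lemma 2.1 reproduces the stated bounds verbatim.

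I expect the main obstacle to be bookkeeping rather than anything deep: correctly distinguishing implicit from explicit terms when forming the amplification polynomial (this decides whether a given symbol enters the coefficient $a$ of $\xi^2+1$ or the coefficient $b$ of $\xi$), and then selecting the correct extremal mode and extremal $\sigma$ in each of the four cases. The only genuinely non-rigorous ingredient — shared with Lemma 2.1 and standard in this setting — is the freezing of the nonlinear coefficient, so the conclusion is a linearized stability statement in the usual von Neumann sense.
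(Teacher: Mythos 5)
Your proposal is correct and follows essentially the same route the paper intends: the paper omits the details and simply invokes the von Neumann linear stability analysis of the frozen-coefficient schemes (as in Lemma 2.1 and the cited references), together with the observation that setting $k=\tau\varepsilon^{\beta}$ maps these schemes onto those of Section 2. Your worked-out amplification polynomials, the identification of the binding mode and extremal $\sigma$ in each case, and the rescaling cross-check all reproduce the stated conditions.
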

For the CNFD (\ref{eq:CNFD_HOE}) and SIFD1 (\ref{eq:SIFD1_HOE}), we have the following energy conservation properties:

\begin{lemma}
The CNFD (\ref{eq:CNFD_HOE}) conserves the discrete energy as
\begin{equation}
\begin{split}
\mathcal{E}^n = &  \varepsilon^{2\beta}\|\delta^{+}_s v^n\|^2_{l^2} + \frac{1}{2}\left( \|\delta^{+}_x v^n\|^2_{l^2} + \|\delta^{+}_x v^{n+1}\|^2_{l^2}\right) + \frac{1}{2}\left( \|v^n\|^2_{l^2} + \|v^{n+1}\|^2_{l^2}\right)\\
& +\frac{h}{4}\varepsilon^{2}\sum^{M-1}_{j=0} \left[ |v^n_j|^4+|v^{n+1}_j|^4 \right] \equiv \mathcal{E}^0, \quad n = 0, 1, 2, \ldots.
\end{split}
\end{equation}
Similarly, the SIFD1 (\ref{eq:SIFD1_HOE}) conserves the discrete energy as
\begin{equation}
\begin{split}
\tilde{\mathcal{E}}^n = & {\varepsilon^{2\beta}}\|\delta^{+}_s v^n\|^2_{l^2} + {h} \sum^{M-1}_{j=0} \left(\delta^+_x v^n_j\right)\left(\delta^+_x v^{n+1}_j\right) + \frac{1}{2}\left( \|v^n\|^2_{l^2} + \|v^{n+1}\|^2_{l^2}\right)\\
&+\frac{h}{4}\varepsilon^{2}\sum^{M-1}_{j=0} \left[ |v^n_j|^4+|v^{n+1}_j|^4 \right] \equiv \tilde{\mathcal{E}}^0, \quad n = 0, 1, 2, \ldots.
\end{split}
\end{equation}
\label{lemma:ec_HOE}
\end{lemma}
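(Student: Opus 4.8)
The plan is to mimic the proof of the discrete energy conservation in Lemma \ref{lemma:ec}, using the standard energy method: test the scheme against the centered temporal difference of the solution and sum over the grid, so that each term collapses into the increment $\mathcal{E}^n-\mathcal{E}^{n-1}$ of a telescoping quantity. Concretely, for the CNFD \eqref{eq:CNFD_HOE} I would multiply the scheme by $h(v^{n+1}_j-v^{n-1}_j)$ and sum over $j=0,1,\ldots,M-1$, using the periodic convention $v_{-1}=v_{M-1}$ and $v_{M+1}=v_1$. The goal is to show that the resulting left-hand side equals exactly $\mathcal{E}^n-\mathcal{E}^{n-1}$, whence $\mathcal{E}^n\equiv\mathcal{E}^0$.

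Each of the four terms is handled separately. For the acceleration term I would use the factorizations $v^{n+1}_j-2v^n_j+v^{n-1}_j=k(\delta^+_s v^n_j-\delta^+_s v^{n-1}_j)$ and $v^{n+1}_j-v^{n-1}_j=k(\delta^+_s v^n_j+\delta^+_s v^{n-1}_j)$, so that after multiplication the product telescopes and the sum becomes $\varepsilon^{2\beta}(\|\delta^+_s v^n\|^2_{l^2}-\|\delta^+_s v^{n-1}\|^2_{l^2})$; here the oscillatory prefactor $\varepsilon^{2\beta}$ simply rides along and is the only new feature compared with Lemma \ref{lemma:ec}. For the diffusion term I would apply discrete summation by parts (valid under periodicity): testing $-\frac{1}{2}\delta^2_x(v^{n+1}_j+v^{n-1}_j)$ against $v^{n+1}_j-v^{n-1}_j$ produces $\frac{1}{2}(\|\delta^+_x v^{n+1}\|^2_{l^2}-\|\delta^+_x v^{n-1}\|^2_{l^2})$, and the linear term likewise gives $\frac{1}{2}(\|v^{n+1}\|^2_{l^2}-\|v^{n-1}\|^2_{l^2})$. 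The crucial step is the nonlinear term: by the very definition of $G$ and $F$ one has the exact identity $G(v^{n+1}_j,v^{n-1}_j)(v^{n+1}_j-v^{n-1}_j)=F(v^{n+1}_j)-F(v^{n-1}_j)=\frac{1}{4}((v^{n+1}_j)^4-(v^{n-1}_j)^4)$, so this term contributes exactly $\frac{\varepsilon^2 h}{4}\sum_{j=0}^{M-1}((v^{n+1}_j)^4-(v^{n-1}_j)^4)$. Adding the four contributions reproduces $\mathcal{E}^n-\mathcal{E}^{n-1}=0$ term by term, which gives the claimed conservation.

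For the SIFD1 \eqref{eq:SIFD1_HOE} the only change is the diffusion term $-\delta^2_x v^n_j$ in place of the Crank--Nicolson average; testing it against $v^{n+1}_j-v^{n-1}_j$ and summing by parts yields $h\sum_{j=0}^{M-1}(\delta^+_x v^n_j)(\delta^+_x v^{n+1}_j-\delta^+_x v^{n-1}_j)$, which is precisely the increment of the cross term $h\sum_{j=0}^{M-1}(\delta^+_x v^n_j)(\delta^+_x v^{n+1}_j)$ appearing in $\tilde{\mathcal{E}}^n$, while all other terms are identical to the CNFD case. The main technical point, and essentially the only place where care is genuinely needed, is the bookkeeping of the telescoping indices together with the clean cancellation in the nonlinear term; the latter is guaranteed precisely by the design of $G$ via $G(v,w)(v-w)=F(v)-F(w)$, and everything else is routine summation by parts under the periodic boundary conditions, exactly as in \cite{BD, LV, SV}.
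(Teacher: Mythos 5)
Your proof is correct: the paper omits the proof of this lemma entirely, referring back to Lemma \ref{lemma:ec} and the standard arguments of \cite{BD, LV, SV}, and your telescoping energy argument (testing against $h(v^{n+1}_j-v^{n-1}_j)$, summation by parts under periodicity, and the exact identity $G(v,w)(v-w)=F(v)-F(w)$) is precisely that intended argument, with the factor $\varepsilon^{2\beta}$ carried along harmlessly. All four terms collapse to $\mathcal{E}^n-\mathcal{E}^{n-1}$ (resp. $\tilde{\mathcal{E}}^n-\tilde{\mathcal{E}}^{n-1}$) exactly as you claim, so nothing is missing.
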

\vspace{-0.3in}
\subsection{Main results}
Again, motivated by the analytical results and the assumptions on the NKGE (\ref{eq:21}), we assume that the exact solution  $v$ of the oscillatory NKGE (\ref{eq:51})  satisfies
\begin{equation*}
(B)
\begin{split}
  v \in & C([0,T_0]; W_p^{4, \infty}) \cap C^2([0, T_0]; W^{2, \infty}) \cap C^3([0, T_0]; W^{1, \infty})  \cap  C^4([0, T_0]; L^{\infty}), \\
&\quad\left\|\frac{\partial^{r+q}}{\partial s^r \partial x^q} v(x, s)\right\|_{L^{\infty}([0, T_0];L^{\infty})} \lesssim \frac{1}{\varepsilon^{\beta r}}, \quad 0 \leq r \leq 4,\quad 0 \leq r+q \leq 4.
\end{split}
\end{equation*}
Define the grid `error' function $\tilde{e}^n \in X_M (n \geq 0)$ as
\begin{equation}
\tilde{e}^n_j = v(x_j, s_n) - v^n_j, \quad j = 0, 1, \ldots, M, \quad n = 0, 1, 2,\ldots,
\end{equation}
where $v^n\in X_M$ is the numerical approximation of the oscillatory NKGE (\ref{eq:51}) obtained by one of the FDTD methods.

By taking $k=\tau\varepsilon^{\beta}$ in the above FDTD methods and noting
the error bounds in Section 3, we can immediately obtain error bounds
of the above FDTD methods for the oscillatory NKGE (\ref{eq:51}).

\begin{theorem}
Under the assumption (B), there exist constants $h_0 > 0$ and $k_0 > 0$ sufficiently small and independent of $\varepsilon$, such that for any $0 < \varepsilon \leq 1$, when $0 < h \leq h_0\varepsilon^{\beta/2}$ and $0 < k \leq k_0\varepsilon^{3\beta/2}$, we have the following error estimates for the CNFD (\ref{eq:CNFD_HOE}) with (\ref{eq:vib}) and (\ref{eq:sinc1})
\begin{equation}
\|\tilde{e}^n\|_{l^2} +\|\delta^{+}_x \tilde{e}^n\|_{l^2} \lesssim {h^2}{\varepsilon^{-\beta}} + {k^2}{\varepsilon^{-3\beta}},\quad  \|v^n\|_{l^{\infty}} \leq 1 + M_0,\quad 0 \leq n \leq {T_0}/{k}.
\end{equation}
\label{thm:CNFD_HOE}
\end{theorem}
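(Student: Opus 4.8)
The plan is to prove Theorem \ref{thm:CNFD_HOE} not by repeating the energy-method argument of Section 3, but by exploiting the exact correspondence between the CNFD discretization (\ref{eq:CNFD_HOE}) of the oscillatory NKGE (\ref{eq:51}) and the CNFD discretization (\ref{eq:CNFD_WNE}) of the original NKGE (\ref{eq:21}) under the time rescaling $k=\tau\varepsilon^{\beta}$ already recorded in the text. Since $s=\varepsilon^{\beta}t$ and $v(x,s)=u(x,s/\varepsilon^{\beta})$, choosing $s_n=\varepsilon^{\beta}t_n$ (equivalently $k=\tau\varepsilon^{\beta}$) makes the two grids coincide in the sense $v(x_j,s_n)=u(x_j,t_n)$, and the whole statement then follows as a corollary of Theorem \ref{thm:CNFD_WNE} after a bookkeeping substitution.

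First I would verify that assumption (B) on $v$ is nothing but assumption (A) on $u$ read through the rescaling: from $\partial_s^{r}v=\varepsilon^{-\beta r}\partial_t^{r}u$ and $\partial_x^{q}v=\partial_x^{q}u$, the bound $\|\partial_t^{r}\partial_x^{q}u\|_{L^\infty}\lesssim 1$ in (A) gives precisely $\|\partial_s^{r}\partial_x^{q}v\|_{L^\infty}\lesssim \varepsilon^{-\beta r}$. Next I would check that the scheme itself is invariant under the substitution: with $k=\tau\varepsilon^{\beta}$ one has $\varepsilon^{2\beta}\delta_s^2=\delta_t^2$ on the identified sequences, so (\ref{eq:CNFD_HOE}) reduces term by term to (\ref{eq:CNFD_WNE}); likewise the discrete initial data match, since in (\ref{eq:sinc1}) the factors $k\varepsilon^{-\beta}$ and $k^2\varepsilon^{-2\beta}$ collapse to $\tau$ and $\tau^2$, reproducing (\ref{eq:w1}), while the boundary/initial conditions (\ref{eq:vib}) are identical to (\ref{eq:wib}). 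By the unique solvability of the CNFD (Lemma \ref{lemma:weaksolvablity}), these two identical difference equations with identical data must share the same solution, whence $v^n_j=u^n_j$ and therefore $\tilde{e}^n=e^n$ for all $j$ and all admissible $n$.

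With the two numerical solutions identified, the error bound is obtained simply by re-expressing the conclusion of Theorem \ref{thm:CNFD_WNE} in the new variables. The admissible range $0\le n\le T_0\varepsilon^{-\beta}/\tau$ becomes $0\le n\le T_0/k$; the step-size constraints $0<h\le h_0\varepsilon^{\beta/2}$ and $0<\tau\le\tau_0\varepsilon^{\beta/2}$ become $0<h\le h_0\varepsilon^{\beta/2}$ and $0<k\le k_0\varepsilon^{3\beta/2}$ (with $k_0=\tau_0$); and the right-hand side $h^2\varepsilon^{-\beta}+\tau^2\varepsilon^{-\beta}$ becomes $h^2\varepsilon^{-\beta}+k^2\varepsilon^{-3\beta}$ upon substituting $\tau=k\varepsilon^{-\beta}$. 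Since $\|v(x,s_n)\|_{L^\infty}=\|u(x,t_n)\|_{L^\infty}$, the same $M_0$ applies and the $l^\infty$ bound $\|u^n\|_{l^\infty}\le 1+M_0$ transfers verbatim to $\|v^n\|_{l^\infty}\le 1+M_0$, which completes the estimate claimed in Theorem \ref{thm:CNFD_HOE}.

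The argument is essentially bookkeeping, so there is no genuine analytic obstacle; the point requiring the most care is the exact matching of the \emph{initialization}. The analysis must be run with the unmodified first step (\ref{eq:sinc1}), not with its $\sin$-regularized variant mentioned in the remark, because only the former collapses exactly to (\ref{eq:w1}) under $k=\tau\varepsilon^{\beta}$, and hence only the former preserves the exact identity $v^n=u^n$ on which the whole reduction rests. One should also confirm that, after rescaling, the first-step local truncation error still contributes at order $\tau^2\varepsilon^{-\beta}=k^2\varepsilon^{-3\beta}$ rather than a worse power of $\varepsilon$. Once the exact equivalence of solutions and data is secured, everything else is a direct transcription of Theorem \ref{thm:CNFD_WNE}, and the analogous reductions yield the corresponding bounds for the SIFD1, SIFD2 and LFFD discretizations of (\ref{eq:51}).
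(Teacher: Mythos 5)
Your proposal is correct and is exactly the paper's own argument: the paper states that taking $k=\tau\varepsilon^{\beta}$ makes the FDTD schemes for the oscillatory NKGE coincide with those of Section 2, so the error bounds follow immediately from Theorem \ref{thm:CNFD_WNE} by the same change of variables you carry out. Your additional bookkeeping (matching assumption (B) to (A), the initial step, and the time range) just makes explicit what the paper leaves implicit.
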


\begin{theorem}
Assume $k \lesssim h \varepsilon^\beta$ and under the assumption (B), there exist constants $h_0 > 0$ and $k_0 > 0$ sufficiently small and independent of $\varepsilon$, such that for any $0 < \varepsilon \leq 1$, when  $0 < h \leq h_0\varepsilon^{\beta/2}$, $0 < k \leq k_0 \varepsilon^{3\beta/2}$ and under the stability condition (\ref{eq:con2_HOE}), we have the following error estimates for the SIFD1 (\ref{eq:SIFD1_HOE}) with (\ref{eq:vib}) and (\ref{eq:sinc1})
\begin{equation}
\|\tilde{e}^n\|_{l^2} +\|\delta^{+}_x \tilde{e}^n\|_{l^2} \lesssim {h^2}{\varepsilon^{-\beta}} + {k^2}{\varepsilon^{-3\beta}},\quad  \|v^n\|_{l^{\infty}} \leq 1 + M_0,\quad 0 \leq n \leq {T_0}/{k}.
\end{equation}
\label{thm:SIFD1_HOE}
\end{theorem}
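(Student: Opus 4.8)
The plan is to reduce the claim to the already-proved error bound for the SIFD1 method on the NKGE \eqref{eq:21}, i.e.\ Theorem \ref{thm:SIFD1_WNE}, by exploiting the exact correspondence between the two discretizations under the time rescaling $s=\varepsilon^\beta t$. First I would fix any $h$ and $k$ satisfying the stated constraints and set $\tau:=k\varepsilon^{-\beta}$, so that $k=\tau\varepsilon^\beta$. A termwise comparison then shows that the scheme \eqref{eq:SIFD1_HOE} for the oscillatory equation coincides with the scheme \eqref{eq:SIFD1_WNE} for \eqref{eq:21}: indeed $\varepsilon^{2\beta}\delta^2_s v^n_j=(v^{n+1}_j-2v^n_j+v^{n-1}_j)/\tau^2$, which is exactly $\delta^2_t$ applied to the sequence $\{v^n\}$, while the spatial, linear and nonlinear terms are identical. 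The starting value \eqref{eq:sinc1} likewise collapses to \eqref{eq:w1} because $k\varepsilon^{-\beta}=\tau$ and $k^2\varepsilon^{-2\beta}=\tau^2$. Hence $v^n=u^n$ for every $n$, where $u^n$ denotes the SIFD1 approximation of \eqref{eq:21}, as already noted in the text.

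Next I would track how each hypothesis and each quantity transforms under $\tau=k\varepsilon^{-\beta}$. The constraint $k\lesssim h\varepsilon^\beta$ becomes $\tau\lesssim h$; the bound $k\le k_0\varepsilon^{3\beta/2}$ becomes $\tau\le k_0\varepsilon^{\beta/2}$, so I take $\tau_0=k_0$; and the stability condition \eqref{eq:con2_HOE} becomes precisely \eqref{eq:con2}. The time window matches since $n$ ranges over $0\le n\le T_0/k=T_0\varepsilon^{-\beta}/\tau$. Because $v(x_j,s_n)=u(x_j,t_n)$ with $t_n=s_n\varepsilon^{-\beta}=n\tau$, the two grid error functions coincide, $\tilde{e}^n=e^n$, and the conclusion $\|e^n\|_{l^2}+\|\delta^{+}_x e^n\|_{l^2}\lesssim h^2\varepsilon^{-\beta}+\tau^2\varepsilon^{-\beta}$ of Theorem \ref{thm:SIFD1_WNE} translates, via $\tau^2\varepsilon^{-\beta}=k^2\varepsilon^{-3\beta}$, into the asserted bound. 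The $l^\infty$ estimate and the constant $M_0$ carry over unchanged.

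The point that requires genuine care—and what I expect to be the main obstacle—is verifying that assumption $(B)$ is \emph{exactly} the image of assumption $(A)$ under the rescaling, which is what makes the invocation of Theorem \ref{thm:SIFD1_WNE} legitimate. Since $s=\varepsilon^\beta t$, the chain rule gives $\partial_s^r v=\varepsilon^{-\beta r}\partial_t^r u$, whence $\|\partial_s^r\partial_x^q v\|_{L^\infty}=\varepsilon^{-\beta r}\|\partial_t^r\partial_x^q u\|_{L^\infty}\lesssim\varepsilon^{-\beta r}$; this is precisely the weighted regularity recorded in $(B)$, and it guarantees that the local-truncation-error estimates underlying Section 3 remain available with the correct $\varepsilon$ weights. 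Once this equivalence is in place the theorem follows at once, with no new estimation. Alternatively one could bypass the reduction and rerun the cut-off and energy argument of Section 3 directly on \eqref{eq:SIFD1_HOE}, but the prefactor $\varepsilon^{2\beta}$ in front of $\delta^2_s$ together with the $\varepsilon^{-\beta r}$ weights in $(B)$ would reproduce the identical bookkeeping, so the reduction is the cleaner route.
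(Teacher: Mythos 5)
Your proposal is correct and follows exactly the route the paper itself takes: the paper states that setting $k=\tau\varepsilon^{\beta}$ makes the oscillatory-equation schemes coincide with those of Section 2 (hence the same discrete solutions), and then transfers the Section 3 error bounds, which is precisely your reduction. Your verification that the hypotheses ($k\lesssim h\varepsilon^\beta\Rightarrow\tau\lesssim h$, $k\le k_0\varepsilon^{3\beta/2}\Rightarrow\tau\le k_0\varepsilon^{\beta/2}$, the stability condition, the time window, and assumption $(B)$ as the rescaled image of $(A)$) all transform consistently is exactly the bookkeeping the paper leaves implicit.
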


\begin{theorem}
Assume $k \lesssim h\varepsilon^\beta$ and under the assumption (B), there exist constants $h_0 > 0$ and $k_0 > 0$ sufficiently small and independent of $\varepsilon$, such that for any $0 < \varepsilon \leq 1$, when  $0 < h \leq h_0\varepsilon^{\beta/2}$, $0 < k \leq k_0 \varepsilon^{3\beta/2}$ and under the stability condition (\ref{eq:con3_HOE}), we have the following error estimates for the SIFD2 (\ref{eq:SIFD2_HOE}) with (\ref{eq:vib}) and (\ref{eq:sinc1})
\begin{equation}
\|\tilde{e}^n\|_{l^2} +\|\delta^{+}_x \tilde{e}^n\|_{l^2} \lesssim {h^2}{\varepsilon^{-\beta}} + {k^2}{\varepsilon^{-3\beta}},\quad  \|v^n\|_{l^{\infty}} \leq 1 + M_0,\quad 0 \leq n \leq {T_0}/{k}.
\end{equation}
\label{thm:SIFD2_HOE}
\end{theorem}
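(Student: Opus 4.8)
The plan is to prove Theorem \ref{thm:SIFD2_HOE} not from scratch but by reduction to the already-established error bound for the SIFD2 scheme on the original NKGE, Theorem \ref{thm:SIFD2_WNE}, exploiting the exact correspondence between the two discretizations under the time rescaling. First I would record the identity $v(x,s)=u(x,s/\varepsilon^{\beta})$ underlying (\ref{eq:51}), and then choose the time step $k=\tau\varepsilon^{\beta}$. With this choice the operator $\varepsilon^{2\beta}\delta_s^2$ acting on a grid sequence equals $\delta_t^2$ with step $\tau$, and a termwise comparison shows that the SIFD2 recursion (\ref{eq:SIFD2_HOE}) together with its initialization (\ref{eq:vib})--(\ref{eq:sinc1}) (in which $k\varepsilon^{-\beta}=\tau$ and $k^2\varepsilon^{-2\beta}=\tau^2$) coincides identically with the SIFD2 recursion (\ref{eq:SIFD2_WNE}) and (\ref{eq:wib})--(\ref{eq:w1}) for the NKGE (\ref{eq:21}). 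Consequently the two numerical solutions agree, $v^n_j=u^n_j$, and since $v(x_j,s_n)=u(x_j,t_n)$ with $s_n=nk$, $t_n=n\tau=s_n/\varepsilon^{\beta}$, the grid error functions coincide as well, $\tilde e^n=e^n$.

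Next I would verify that every hypothesis and every constraint transforms consistently under $\tau=k\varepsilon^{-\beta}$, so that Theorem \ref{thm:SIFD2_WNE} is genuinely applicable. Assumption (B) on $v$ is equivalent to assumption (A) on $u$: from $\partial_s^r v=\varepsilon^{-\beta r}\partial_t^r u$ we get $\|\partial_s^r\partial_x^q v\|_{L^\infty}=\varepsilon^{-\beta r}\|\partial_t^r\partial_x^q u\|_{L^\infty}\lesssim \varepsilon^{-\beta r}$, and the relevant time interval $[0,T_0]$ in $s$ maps to $[0,T_0\varepsilon^{-\beta}]\subseteq[0,T_0\varepsilon^{-2}]$ in $t$ (using $0\le\beta\le2$), on which (A) is assumed. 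The meshing requirement $0<h\le h_0\varepsilon^{\beta/2}$ is unchanged; the bound $0<\tau\le\tau_0\varepsilon^{\beta/2}$ becomes $0<k\le \tau_0\varepsilon^{3\beta/2}$, giving $k_0=\tau_0$; the CFL-type assumption $\tau\lesssim h$ becomes $k\lesssim h\varepsilon^{\beta}$; the step count satisfies $T_0\varepsilon^{-\beta}/\tau=T_0/k$; and because $v^n=u^n$ we have $\tilde\sigma_{\rm max}=\sigma_{\rm max}$ in (\ref{tsmax}) and (\ref{smax}), so the stability condition (\ref{eq:con3}) with $\tau=k\varepsilon^{-\beta}$ is exactly (\ref{eq:con3_HOE}).

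Having matched all conditions, I would simply invoke Theorem \ref{thm:SIFD2_WNE} and rewrite its conclusion in the rescaled variables: using $\tilde e^n=e^n$ and $\tau^2\varepsilon^{-\beta}=k^2\varepsilon^{-2\beta}\varepsilon^{-\beta}=k^2\varepsilon^{-3\beta}$, the bound $\|e^n\|_{l^2}+\|\delta^+_x e^n\|_{l^2}\lesssim h^2\varepsilon^{-\beta}+\tau^2\varepsilon^{-\beta}$ becomes precisely $\|\tilde e^n\|_{l^2}+\|\delta^+_x\tilde e^n\|_{l^2}\lesssim h^2\varepsilon^{-\beta}+k^2\varepsilon^{-3\beta}$ for $0\le n\le T_0/k$, while the uniform estimate $\|v^n\|_{l^\infty}=\|u^n\|_{l^\infty}\le 1+M_0$ carries over verbatim.

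The main point requiring care is the bookkeeping of the negative powers of $\varepsilon$, in particular confirming that the single substitution $\tau=k\varepsilon^{-\beta}$ converts the temporal term of the error \emph{and} the time-step threshold consistently, so that the tightened requirement $k=O(\varepsilon^{3\beta/2})$ and the extra factor $\varepsilon^{-2\beta}$ on $k^2$ both emerge from the same scaling and are not artifacts. I expect this reduction to be the cleanest route; a fully self-contained direct proof is also possible by mimicking the mathematical-induction argument used for the nonconservative LFFD in Section 3.3 (SIFD2 being likewise nonconservative), but that approach must carry the $\varepsilon^{-\beta r}$ growth of $\partial_s^r v$ through the local truncation error and close the induction by bounding $\|v^n\|_{l^\infty}$ via the inverse inequality at each step to validate (\ref{eq:con3_HOE}) at the next step, which is exactly the $\tilde\sigma_{\rm max}$-coupling that the reduction sidesteps.
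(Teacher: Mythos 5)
Your proposal is correct and is essentially the paper's own argument: the paper explicitly notes that taking $k=\tau\varepsilon^{\beta}$ makes the FDTD schemes for the oscillatory NKGE coincide with those of Section 2 (hence the same numerical solutions), and then obtains Theorems \ref{thm:CNFD_HOE}--\ref{thm:LFFD_HOE} by transcribing the Section 3 error bounds, exactly as you do. Your bookkeeping of how assumption (B) matches (A), how $\tau\lesssim h$ and the stability condition (\ref{eq:con3}) become $k\lesssim h\varepsilon^{\beta}$ and (\ref{eq:con3_HOE}), and how $\tau^{2}\varepsilon^{-\beta}$ becomes $k^{2}\varepsilon^{-3\beta}$ is the same reduction the paper relies on but omits for brevity.
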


\begin{theorem}
Assume $k \lesssim h\varepsilon^\beta$ and under the assumption (B), there exist constants $h_0 > 0$ and $k_0 > 0$ sufficiently small and independent of $\varepsilon$, such that for any $0 < \varepsilon \leq 1$, when  $0 < h \leq h_0\varepsilon^{\beta/2}$, $0 < k \leq k_0 \varepsilon^{3\beta/2}$ and under the stability condition (\ref{eq:con4_HOE}), we have the following error estimates for the LFFD (\ref{eq:LFFD_HOE}) with (\ref{eq:vib}) and (\ref{eq:sinc1})
\begin{equation}
\|\tilde{e}^n\|_{l^2} +\|\delta^{+}_x \tilde{e}^n\|_{l^2} \lesssim {h^2}{\varepsilon^{-\beta}} + {k^2}{\varepsilon^{-3\beta}},\quad  \|v^n\|_{l^{\infty}} \leq 1 + M_0,\quad 0 \leq n \leq {T_0}/{k}.
\end{equation}
\label{thm:LFFD_HOE}
\end{theorem}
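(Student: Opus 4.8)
The plan is to avoid any fresh energy estimate and instead reduce Theorem~\ref{thm:LFFD_HOE} to the already-established Theorem~\ref{thm:LFFD_WNE} by exploiting the exact correspondence between the two problems under the time rescaling $s=\varepsilon^{\beta}t$. Concretely, I would fix $k=\tau\varepsilon^{\beta}$ and check that, under this choice, the oscillatory LFFD scheme \eqref{eq:LFFD_HOE} together with its initialization \eqref{eq:sinc1} coincide identically with the LFFD scheme \eqref{eq:LFFD_WNE} and \eqref{eq:w1} for the original NKGE \eqref{eq:21}. Indeed $\varepsilon^{2\beta}\delta^{2}_s v^n_j=\varepsilon^{2\beta}(v^{n+1}_j-2v^n_j+v^{n-1}_j)/k^2=(v^{n+1}_j-2v^n_j+v^{n-1}_j)/\tau^2=\delta^{2}_t v^n_j$, so \eqref{eq:LFFD_HOE} becomes $\delta^2_t v^n_j-\delta^2_x v^n_j+v^n_j+\varepsilon^2(v^n_j)^3=0$, which is exactly \eqref{eq:LFFD_WNE}; likewise $k\varepsilon^{-\beta}=\tau$ and $k^2\varepsilon^{-2\beta}=\tau^2$ turn \eqref{eq:sinc1} into \eqref{eq:w1}. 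Since both sequences solve the same explicit recursion from the same first two values, the numerical solutions are literally identical, $v^n=u^n$ for every $n$.

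Next I would match the exact solutions and the error functions. Because $v(x,s)=u(x,s/\varepsilon^{\beta})$ and $s_n=nk=n\tau\varepsilon^{\beta}=t_n\varepsilon^{\beta}$, we have $v(x_j,s_n)=u(x_j,t_n)$, hence $\tilde e^n=e^n$ pointwise; and the oscillatory time horizon $0\le n\le T_0/k$ is precisely $0\le n\le (T_0\varepsilon^{-\beta})/\tau$, the long-time range of Theorem~\ref{thm:LFFD_WNE}. I would then confirm that the hypotheses transfer correctly: by the chain rule $\partial_s^r\partial_x^q v=\varepsilon^{-\beta r}\,\partial_t^r\partial_x^q u$, so assumption (A) for $u$ (that $\|\partial_t^r\partial_x^q u\|_{L^\infty}\lesssim1$) is equivalent to assumption (B) for $v$ (that $\|\partial_s^r\partial_x^q v\|_{L^\infty}\lesssim\varepsilon^{-\beta r}$). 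Substituting $\tau=k\varepsilon^{-\beta}$ then maps the meshing requirement $\tau\le\tau_0\varepsilon^{\beta/2}$ to $k\le\tau_0\varepsilon^{3\beta/2}$, the step-ratio condition $\tau\le\frac{1}{2}\min\{1,h\}$ to $k\lesssim h\varepsilon^{\beta}$, and the stability condition \eqref{eq:con4} to \eqref{eq:con4_HOE} (with $\sigma_{\rm max}$ and $\tilde\sigma_{\rm max}$ agreeing since $v^n=u^n$), while the purely spatial constraint $h\le h_0\varepsilon^{\beta/2}$ is unaffected by a time rescaling.

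Finally I would read off the bound by the same substitution. The conclusion of Theorem~\ref{thm:LFFD_WNE} gives $\|e^n\|_{l^2}+\|\delta^+_x e^n\|_{l^2}\lesssim h^2\varepsilon^{-\beta}+\tau^2\varepsilon^{-\beta}$ together with $\|u^n\|_{l^\infty}\le1+M_0$; replacing $\tau=k\varepsilon^{-\beta}$ converts $\tau^2\varepsilon^{-\beta}$ into $k^2\varepsilon^{-3\beta}$, yielding exactly $\|\tilde e^n\|_{l^2}+\|\delta^+_x\tilde e^n\|_{l^2}\lesssim h^2\varepsilon^{-\beta}+k^2\varepsilon^{-3\beta}$ and $\|v^n\|_{l^\infty}\le1+M_0$ on $0\le n\le T_0/k$, with $k_0=\tau_0$ and the same $h_0$. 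Note that $M_0$ is unchanged because $\|v\|_{L^\infty}=\|u\|_{L^\infty}$.

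I expect no serious analytical obstacle, since all the real work was done in Section~3 via mathematical induction; the single point that genuinely requires care is the bookkeeping of $\varepsilon$-powers. In particular one must verify that the $\varepsilon^{-\beta r}$ weights encoded in assumption (B) are exactly those generated by differentiating $v(x,s)=u(x,s/\varepsilon^{\beta})$ in time, because it is this balance that forces the temporal part of the error to emerge as $k^2\varepsilon^{-3\beta}$ rather than some other power; a mismatch here would silently corrupt the claimed $\varepsilon$-scalability. I would therefore present the argument as a clean change of variables and check each exponent shift explicitly (for instance $\beta/2\to3\beta/2$ in the step-size constraint and $\varepsilon^{-\beta}\to\varepsilon^{-3\beta}$ in the temporal error) rather than appealing to the correspondence informally.
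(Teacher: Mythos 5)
Your proposal is correct and follows essentially the same route as the paper: Section 5 explicitly notes that taking $k=\tau\varepsilon^{\beta}$ makes the oscillatory FDTD schemes (including \eqref{eq:LFFD_HOE} with \eqref{eq:sinc1}) coincide with those of Section 2, and then obtains Theorems \ref{thm:CNFD_HOE}--\ref{thm:LFFD_HOE} directly from the long-time error bounds of Section 3 by this substitution. Your write-up simply makes explicit the exponent bookkeeping ($\tau^2\varepsilon^{-\beta}\to k^2\varepsilon^{-3\beta}$, $\tau_0\varepsilon^{\beta/2}\to k_0\varepsilon^{3\beta/2}$, (A)$\to$(B), \eqref{eq:con4}$\to$\eqref{eq:con4_HOE}) that the paper leaves to the reader.
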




The above four FDTD methods share the same spatial/temporal resolution capacity  for the oscillatory NKGE (\ref{eq:51}) up to the fixed time at $O(1)$. In fact, given an accuracy bound $\delta_0>0$, the $\varepsilon$-scalability (or meshing strategy) of the FDTD methods for
the oscillatory NKGE (\ref{eq:51}) should be taken as
\begin{align}
h = O(\varepsilon^{\beta/2}\sqrt{\delta_0}) =O(\varepsilon^{\beta/2}), \quad \tau = O(\varepsilon^{3\beta/2}\sqrt{\delta_0}) =O(\varepsilon^{3\beta/2}), \quad  0 < \varepsilon \leq 1.
\label{eq:oNE_scalability}
\end{align}
Again, these results are very useful for
practical computations on how to select mesh size and time step such that
the numerical results are trustable!



\subsection{Numerical results of the oscillatory NKGE in the whole space}
Consider the following oscillatory NKGE in $d$-dimensional ($d=1,2,3$) whole space
\begin{equation}
\begin{split}
&\varepsilon^{2\beta}\partial_{ss} v({\bf x,} s) - \Delta v({\bf x}, s) + v({\bf x}, s) + \varepsilon^{2} v^3({\bf x}, s) = 0,\quad {\bf{x}} \in \mathbb{R}^d,\quad s > 0,\\
&v({\bf x}, 0) = \phi({\bf x}), \quad \partial_s v({\bf x}, 0) = {\varepsilon^{-\beta}} \gamma({\bf x}), \quad {\bf{x}} \in \mathbb{R}^d.
\label{eq:550}
\end{split}
\end{equation}
Similar to the oscillatory NKGE \eqref{eq:50}, the solution of
of the oscillatory NKGE \eqref{eq:550} propagates waves with wavelength at $O(1)$ in space and $O(\varepsilon^\beta)$ in time, and wave speed in space at $O(\varepsilon^{-\beta})$. To illustrate the rapid wave propagation in space at $O(\varepsilon^{-\beta})$, Figure \ref{fig:HOE_x} shows the solution $v(x,1)$ of
the oscillatory NKGE \eqref{eq:550} with $d=1$ and initial data
\begin{align}\label{ex:5.1}	
\phi(x) = 2/(e^{x^2} + e^{-x^2})\quad \mbox{and}\quad \gamma(x) = 0,
\qquad x\in {\mathbb R}.
\end{align}
\begin{figure}[ht!]
\begin{minipage}{\textwidth}
\centerline{\includegraphics[width = 16cm,height = 5cm]{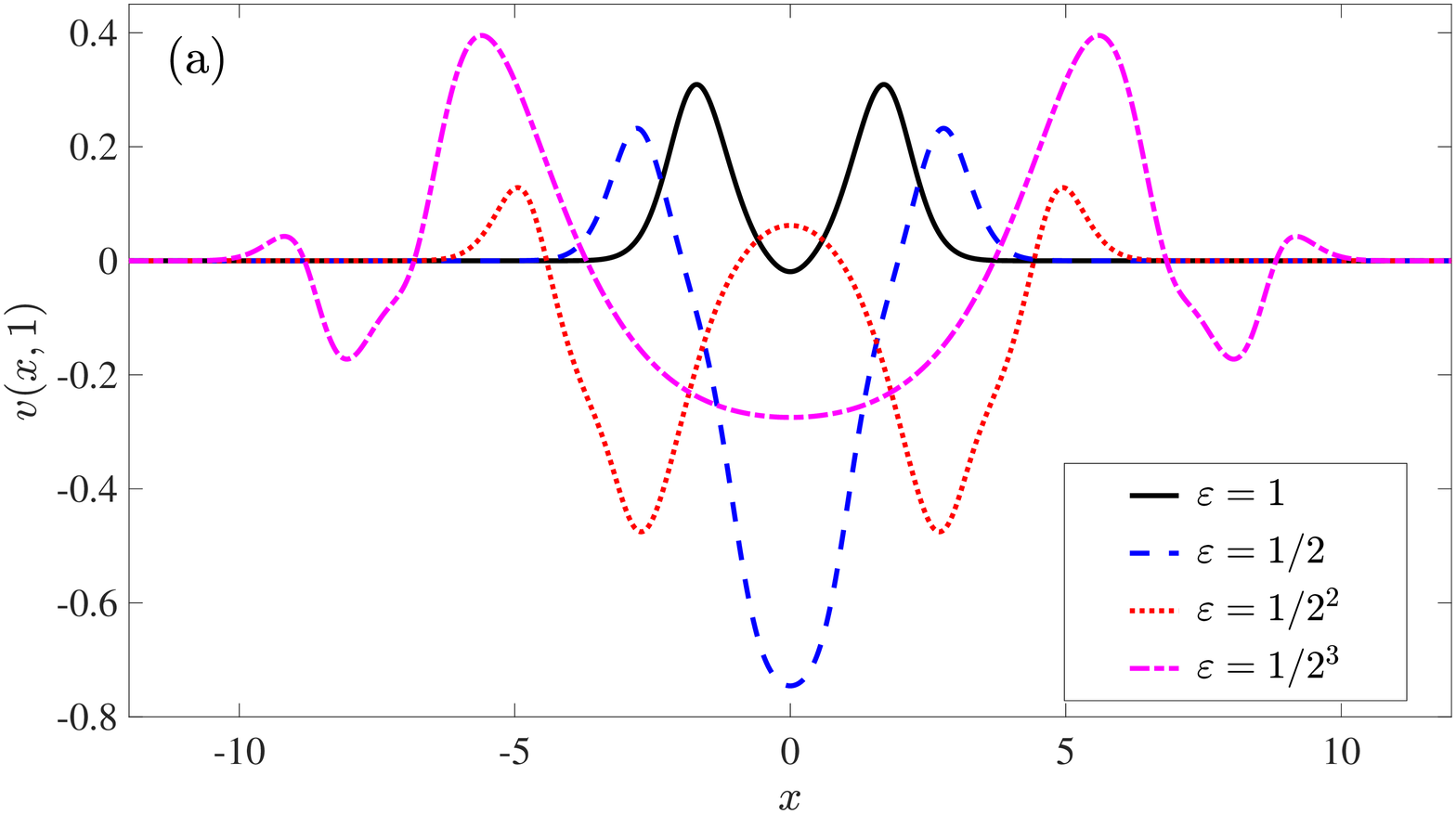}}
\end{minipage}
\begin{minipage}{\textwidth}
\centerline{\includegraphics[width = 16cm,height = 5cm]{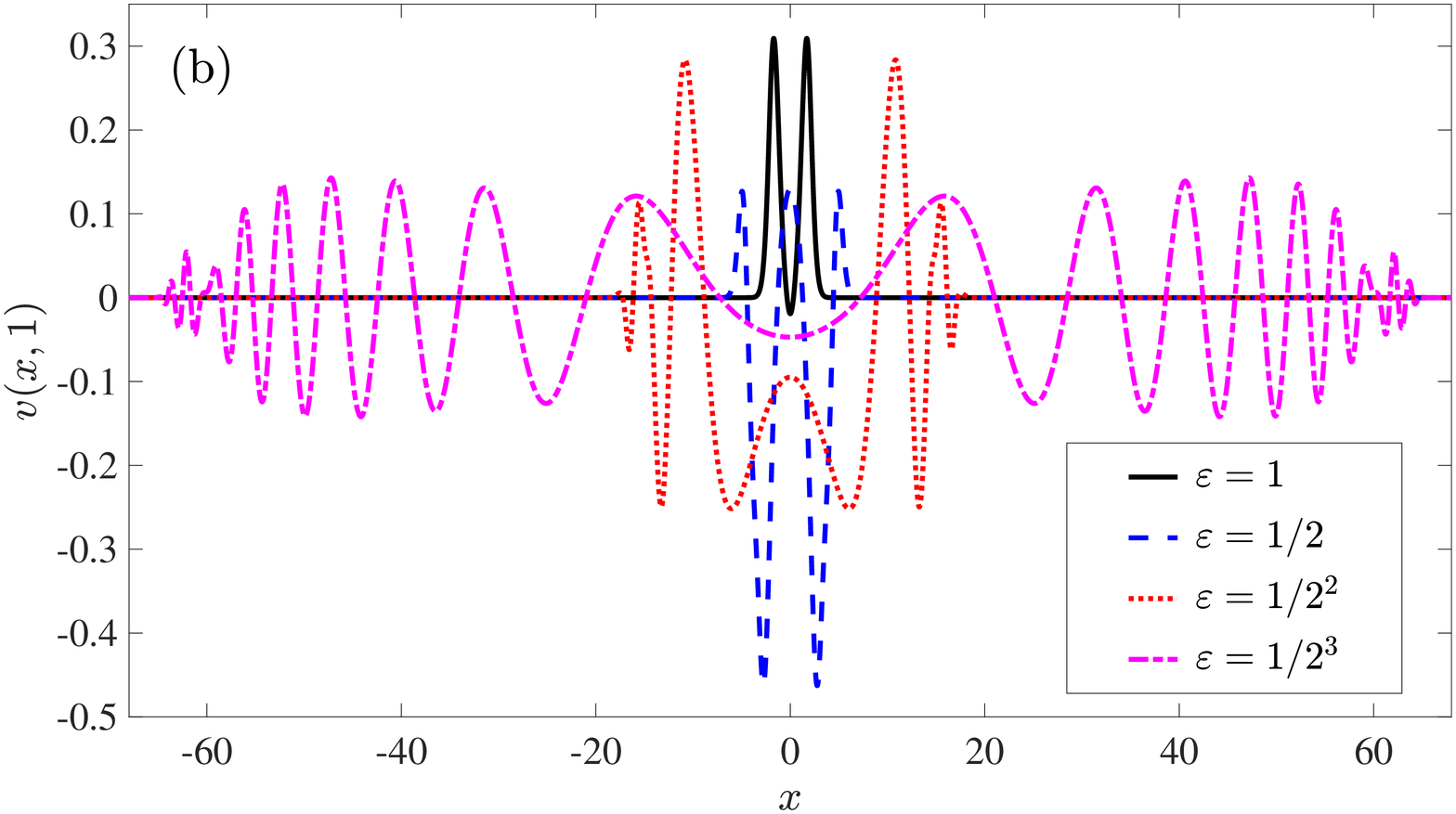}}
\end{minipage}
\caption{The solutions $v(x, 1)$ of the oscillatory NKGE
\eqref{eq:550} with $d=1$ and initial data \eqref{ex:5.1}
 for different $\varepsilon$ and $\beta$: (a) $\beta = 1$, (b) $\beta = 2$.}
\label{fig:HOE_x}
\end{figure}

Similar to those in the literature, by using the fast decay of the solution of the oscillatory NKGE \eqref{eq:550}  at the far field (see \cite{BD, DB, SV} and references therein), in practical computation, we usually truncate the originally whole space problem onto a bounded domain $\Omega$ with periodic boundary conditions, provided that $\Omega$ is large enough such that the truncation error is negligible. Then the truncated problem can be
solved by the FDTD methods. Of course, due to the rapid wave propagation in space of the oscillatory NKGE \eqref{eq:550} (cf. Fig. \ref{fig:HOE_x}),
in order to compute numerical solution up to the time at $O(1)$, in general,
the size of the bounded domain $\Omega$ has to be taken as $O(\varepsilon^{-\beta})$.

In the following, we report numerical results of the oscillatory NKGE \eqref{eq:550} with $d=1$.   The initial data is chosen as \eqref{ex:5.1} and the bounded computational domain is taken as $\Omega_{\varepsilon} = [-4-{\varepsilon^{-\beta}}, 4 + {\varepsilon^{-\beta}}]$. The `exact' solution is obtained numerically by the exponential-wave integrator Fourier pseudospectral method with a very fine mesh size and a very small time step, e.g. $h_e=1/2^{13}$ and $k_e = 2 \times10^{-6}$. Denote $v^n_{h,k}$ as the numerical solution at $s=s_n$ obtained by a numerical method with mesh size $h$ and time step $k$. In order to quantify the numerical results, we define the error function as follows:
\begin{equation}
e_{h,k}(s_n) = \sqrt{\|v(\cdot, s_n) - v^n_{h,k}\|^2_{l^2}+ \|\delta^{+}_x (v(\cdot, s_n) - v^n_{h,k})\|^2_{l^2}}.
\end{equation}
Tables \ref{tab:HOE_beta1_h} and \ref{tab:HOE_beta1_t} show the spatial and temporal errors, respectively,  of the CNFD method with $\beta = 1$, and
Tables \ref{tab:HOE_beta2_h} and \ref{tab:HOE_beta2_t}  show similar results
for $\beta=2$. The results for other FDTD methods are quite similar and they are omitted here for brevity.

\begin{table}[ht!]
\caption{Spatial errors of the CNFD \eqref{eq:CNFD_HOE} for the oscillatory NKGE (\ref{eq:550}) with $d=1$, $\beta=1$ and \eqref{ex:5.1}}
\centering
\begin{tabular}{ccccccc}
\hline
$e_{h,k_e}(s=1)$ &$h_0 = 1/8 $ & $h_0/2 $ &$h_0/2^2 $ & $h_0/2^3 $ & $h_0/2^4$ & $h_0/2^5$ \\
\hline
$\varepsilon_0 = 1$ & \bf{1.68E-2} & 4.26E-3 & 1.07E-3 & 2.68E-4 & 6.72E-5 & 1.76E-5 \\
order & \bf{-} & 1.98 & 1.99 & 2.00 & 2.00 & 1.93 \\
\hline
$\varepsilon_0 / 4 $ & 5.60E-2 & \bf{1.44E-2} & 3.63E-3 & 9.08E-4 & 2.27E-4 & 5.69E-5 \\
order & -  & \bf{1.96} & 1.99 & 2.00 & 2.00 & 2.00 \\
\hline
$\varepsilon_0 / 4^2 $ & 2.00E-1 & 5.68E-2 & \bf{1.45E-2} & 3.63E-3 & 9.07E-4 & 2.27E-4 \\
order & - & 1.82 & \bf{1.97} & 2.00 & 2.00 & 2.00 \\
\hline
$\varepsilon_0 / 4^3 $ & 4.83E-1 & 2.02E-1 & 5.70E-2 & \bf{1.45E-2} & 3.63E-3 & 9.12E-4 \\
order & -  & 1.26 & 1.83 & \bf{1.97} & 2.00 & 1.99 \\
\hline
$\varepsilon_0 / 4^4 $  & 6.21E-1 & 4.86E-1 & 2.03E-1 & 5.74E-2 & \bf{1.48E-2} & 3.97E-3 \\
order & -  & 0.35 & 1.26 & 1.82 & \bf{1.96} & 1.90 \\
\hline
\end{tabular}
\label{tab:HOE_beta1_h}
\end{table}

\begin{table}[ht!]
\caption{Temporal errors of the CNFD \eqref{eq:CNFD_HOE} for the oscillatory NKGE (\ref{eq:550}) with $d=1$, $\beta=1$ and \eqref{ex:5.1}}
\centering
\begin{tabular}{ccccccc}
\hline
$e_{h_e,k}(s=1)$ & $k_0 = 0.025 $ & $k_0/4 $ &$k_0/4^2 $ & $k_0/4^3 $ & $k_0/4^4$ & $k_0/4^5$ \\
\hline
$\varepsilon_0 = 1$ & \bf{4.11E-3}  & 2.64E-4 & 1.66E-5 & 1.05E-6 & 7.82E-8  & $<$1E-8 \\
order & \bf{-} & 1.98 & 2.00 &1.99 & 1.87 & - \\
\hline
$\varepsilon_0 / 4^{2/3} $ & 4.88E-2 & \bf{3.24E-3} & 2.04E-4 & 1.28E-5 & 8.29E-7& 6.48E-8 \\
order & -  & \bf{1.96} & 1.99 & 2.00 & 1.97 & 1.84 \\
\hline
$\varepsilon_0 / 4^{4/3} $ & 4.98E-1 & 5.06E-2 & \bf{3.23E-3} & 2.02E-4 & 1.28E-5 & 8.73E-7 \\
order & - & 1.65 & \bf{1.98} & 2.00 & 1.99 & 1.94 \\
\hline
$\varepsilon_0 / 4^{6/3} $ & 1.75E+0 & 5.18E-1 & 5.13E-2 & \bf{3.23E-3} & 2.02E-4 & 1.28E-5 \\
order & -  & 0.88 & 1.67 & \bf{1.99} & 2.00 & 1.99 \\
\hline
$\varepsilon_0 / 4^{8/3} $ & 1.93E+0 & 1.71E+0 & 5.27E-1 & 5.18E-2 & \bf{3.24E-3} & 2.02E-4\\
order & - & 0.09 & 0.85 & 1.67 & \bf{2.00} & 2.00\\
\hline
\end{tabular}
\label{tab:HOE_beta1_t}
\end{table}

\begin{table}[ht!]
\caption{Spatial errors of the CNFD \eqref{eq:CNFD_HOE} for the oscillatory NKGE (\ref{eq:550}) with $d=1$, $\beta=2$ and \eqref{ex:5.1}}
\centering
\begin{tabular}{ccccccc}
\hline
$e_{h,k_e}(s=1)$ &$h_0 = 1/8 $ & $h_0/2 $ &$h_0/2^2 $ & $h_0/2^3 $ & $h_0/2^4$ & $h_0/2^5$\\
\hline
$\varepsilon_0 = 1$ & \bf{1.68E-2} & 4.26E-3 & 1.07E-3 & 2.68E-4 & 6.72E-5 &1.76E-5  \\
order & \bf{-} & 1.98 & 1.99 & 2.00 & 2.00 & 1.93 \\
\hline
$\varepsilon_0 / 2 $ & 5.64E-2 & \bf{1.46E-2} & 3.66E-3 & 9.16E-4 & 2.30E-4 & 5.74E-5  \\
order & - & \bf{1.95} & 2.00 & 2.00 & 2.00 & 2.00 \\
\hline
$\varepsilon_0 / 2^2 $ & 2.01E-1 & 5.71E-2 & \bf{1.46E-2} & 3.65E-3 & 9.12E-4 & 2.28E-4 \\
order & -  & 1.82 & \bf{1.97} & 2.00 & 2.00 & 2.00 \\
\hline
$\varepsilon_0 / 2^3 $ & 4.83E-1 & 2.03E-1 & 5.71E-2 & \bf{1.45E-2} & 3.64E-3 & 9.14E-4 \\
order & - & 1.25 & 1.83 & \bf{1.98} & 1.99 & 1.99 \\
\hline
$\varepsilon_0 / 2^4 $  & 6.22E-1 & 4.86E-1 & 2.03E-1 & 5.74E-2 & \bf{1.48E-2} & 3.97E-3 \\
order & -  & 0.36 & 1.26 & 1.82 & \bf{1.96} & 1.90 \\
\hline
\end{tabular}
\label{tab:HOE_beta2_h}
\end{table}

\begin{table}[ht!]
\caption{Temporal errors of the CNFD \eqref{eq:CNFD_HOE} for the oscillatory NKGE (\ref{eq:550}) with $d=1$, $\beta=2$ and \eqref{ex:5.1}}
\centering
\begin{tabular}{ccccccccc}
\hline
$e_{h_e,k}(s=1)$ & $k_0 = 0.025 $ & $k_0/4 $ &$k_0/4^2 $ & $k_0/4^3 $ & $k_0/4^4$  & $k_0/4^5$ \\
\hline
$\varepsilon_0 = 1$ & \bf{4.11E-3}  & 2.64E-4 & 1.66E-5 & 1.05E-6 & 7.82E-8  & $<$1E-8\\
order & \bf{-} & 1.98 & 2.00 &1.99 & 1.87 & - \\
\hline
$\varepsilon_0 / 4^{1/3} $ & 4.99E-2 & \bf{3.31E-3} & 2.08E-4 & 1.31E-5 & 8.48E-7& 9.37E-8 \\
order & -  & \bf{1.96} & 2.00 &  1.99 & 1.97  & 1.59\\
\hline
$\varepsilon_0 / 4^{2/3} $ & 5.03E-1 & 5.13E-2 & \bf{3.28E-3} & 2.05E-4 & 1.29E-5 & 8.85E-7\\
order & - & 1.65 & \bf{1.98} & 2.00 & 2.00 & 1.93 \\
\hline
$\varepsilon_0 / 4^{3/3} $ & 1.77E+0 & 5.21E-1 & 5.17E-2 & \bf{3.26E-3} & 2.04E-4 & 1.29E-5 \\
order & -  & 0.88 & 1.67 & \bf{1.99} & 2.00 & 1.99 \\
\hline
$\varepsilon_0 / 4^{4/3} $ & 1.93E+0 & 1.72E+0 & 5.28E-1 & 5.19E-2 & \bf{3.25E-3} & 2.03E-4 \\
order & - & 0.08 & 0.85 & 1.67 & \bf{2.00} & 2.00\\
\hline
\end{tabular}
\label{tab:HOE_beta2_t}
\end{table}

From Tables \ref{tab:HOE_beta1_h}-\ref{tab:HOE_beta2_t} for the CNFD and additional similar numerical results for other FDTD methods not shown here for brevity, we can draw the following observations on the FDTD methods for
the oscillatory NKGE \eqref{eq:50} (or \eqref{eq:550}):

(i) For any fixed $\varepsilon=\varepsilon_0>0$ or $\beta=0$, the FDTD methods are uniformly second-order accurate in both spatial and temporal discretizations (cf. the first rows in Tables \ref{tab:HOE_beta1_h}-\ref{tab:HOE_beta2_t}), which agree
with those results in the literature. (ii) In the intermediate oscillatory case, i.e. $\beta=1$, the second order convergence in space and time
of the FDTD methods can be observed only when $0<h \lesssim \varepsilon^{1/2}$ and $0 < k \lesssim \varepsilon^{3/2}$
(cf. upper triangles above the diagonals (corresponding to $h\sim \varepsilon^{1/2}$ and $k \sim \varepsilon^{3/2}$, and being labelled in bold letters)  in   Tables \ref{tab:HOE_beta1_h}-\ref{tab:HOE_beta1_t}), which confirm our error bounds.
(iii) In the highly oscillatory case, i.e. $\beta=2$, the second order convergence in space and time
of the FDTD methods can be observed only when $0<h \lesssim \varepsilon$ and $0 < k \lesssim \varepsilon^3$
(cf. upper triangles above the diagonals (corresponding to $h\sim \varepsilon$ and $k\sim \varepsilon^3$, and being labelled in bold letters) in   Tables \ref{tab:HOE_beta2_h}-\ref{tab:HOE_beta2_t}), which again confirm our error bounds.
In summary, our numerical results confirm our rigorous error bounds and
show that they are sharp.


\section{Conclusion}
Four different finite difference time domain  FDTD methods were adapted to discretize the nonlinear Klein-Gordon equation (NKGE) with a weak cubic nonlinearity, while the nonlinearity strength is characterized by
$\varepsilon^2$ with  $0 < \varepsilon \leq 1$ a dimensionless parameter.
Rigorous error estimates were established for the long time
dynamics of the NKGE up to the time at $O(\varepsilon^{-\beta})$ with $0 \leq \beta \leq 2$. The error bounds depend explicitly on the mesh size $h$ and
time step $\tau$ as well as the small parameter $\varepsilon\in (0,1]$, which
indicate the temporal and spatial resolution capacities of the FDTD methods
for the long time dynamics of the NKGE. Based on the error bounds, in order
to get ``correct'' numerical solution of the NKGE up to the long time at
$O(\varepsilon^{-\beta})$ with $0 < \beta \leq 2$, the $\varepsilon$-scalability (or meshing strategy) of the FDTD methods has to be taken as: $h = O(\varepsilon^{\beta/2})$ and $\tau=O(\varepsilon^{\beta/2})$.
In addition, the FDTD methods were also applied to solve an oscillatory NKGE
and their error bounds were also obtained.
Extensive numerical results were reported to confirm our error bounds
and to demonstrate that they are sharp.

\section*{Acknowledgments}
We thank fruitful discussion with Dr Chunmei Su. This work was partially  supported by the Ministry of Education of Singapore grant R-146-000-223-112.


\end{document}